\definecolor{uuuuuu}{rgb}{0.27,0.27,0.27}
\definecolor{sqsqsq}{rgb}{0.1255,0.1255,0.1255}
\newtheorem{dfn}{Definition} [section]
\newtheorem{thm}[dfn]{Theorem}
\newtheorem{lemma}[dfn]{Lemma}
\newtheorem{prop}[dfn]{Proposition}
\newtheorem{conj}[dfn]{Conjecture}
\newtheorem{claim}[dfn]{Claim}
\newtheorem{prob}[dfn]{Problem}
\newtheorem{obs}[dfn]{Observation}
\def\lc{\left\lceil}
\def\rc{\right\rceil}
\def\lf{\left\lfloor}
\def\rf{\right\rfloor}
\begin{document}
\title{\bf\Large A hypergraph Tur\'{a}n problem with no stability}

\date{\today}

\author{Xizhi Liu\thanks{Department of Mathematics, Statistics, and Computer Science, University of Illinois, Chicago, IL, 60607 USA. Email: xliu246@uic.edu.}
\and
Dhruv Mubayi\thanks{Department of Mathematics, Statistics, and Computer Science, University of Illinois, Chicago, IL, 60607 USA. Email: mubayi@uic.edu.
Research partially supported by NSF award DMS-1763317.}
}
\maketitle
\begin{abstract}
A fundamental barrier in extremal hypergraph theory is the presence of many near-extremal constructions with very different structure.
Indeed, the classical constructions due to Kostochka imply that the notorious extremal problem for the tetrahedron
exhibits this phenomenon assuming Tur\'an's conjecture.

Our main result is to construct a finite family of triple systems $\mathcal{M}$, determine its Tur\'{a}n number,
and prove that there are two near-extremal $\mathcal{M}$-free constructions that are far from each other in edit-distance.
This is the first extremal result for a hypergraph family that fails to have a corresponding stability theorem.
\end{abstract}

\section{Introduction}
Let $r \ge 3$ and $\mathcal{F}$ be a family of $r$-uniform graphs (henceforth $r$-graphs).
An $r$-graph $\mathcal{H}$ is $\mathcal{F}$-free if it contains no member of $\mathcal{F}$ as a subgraph.
The Tur\'{a}n number $\textrm{ex}(n,\mathcal{F})$ of $\mathcal{F}$ is the maximum number of edges in an $\mathcal{F}$-free $r$-graph on $n$ vertices.
The Tur\'{a}n density $\pi(\mathcal{F})$
of $\mathcal{F}$ is defined as $\pi(\mathcal{F}) := \lim_{n \to \infty} \textrm{ex}(n,\mathcal{F}) / \binom{n}{r}$.
The study of $\textrm{ex}(n,\mathcal{F})$ is perhaps the central topic in extremal graph and hypergraph theory.

Much is known about $\textrm{ex}(n,\mathcal{F})$ when $r=2$ and one of the most famous results in this regard is Tur\'{a}n's theorem,
which states that for $\ell \ge 2$ the Tur\'{a}n number ${\rm ex}(n,K_{\ell+1})$ is uniquely achieved by
$T(n,\ell)$ which is the $\ell$-partite graph on $n$ vertices with the maximum number of edges.

For $\ell > r \ge 3$, let $K_{\ell}^{r}$ be the complete $r$-graph on $\ell$ vertices.
Extending Tur\'{a}n's theorem to hypergraphs (i.e. $r\ge 3$) is a major problem.
Indeed, the problem of determining $\pi(K_{\ell}^{r})$ was raised by Tur\'{a}n \cite{TU41} and is still wide open.
Erd\H{o}s offered $\$ 500$ for the determination of any $\pi(K_{\ell}^{r})$ with $\ell > r \ge 3$
and $\$ 1000$ for the determination of all $\pi(K_{\ell}^{r})$ with $\ell > r \ge 3$.

\begin{conj}[Tur\'{a}n \cite{TU41}]\label{Turan-conj-r-gp}
$\pi(K_{\ell}^{3}) = 1- \left( \frac{2}{\ell -1} \right)^2$.
\end{conj}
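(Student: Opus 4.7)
The conjecture splits into a lower bound (a construction) and a matching upper bound. The lower bound is classical; the upper bound is the Erd\H{o}s prize problem and, to my knowledge, remains open for every $\ell \ge 4$, so any honest plan is mostly a plan for the lower bound together with a wish-list for the upper bound.

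For the lower bound I would exhibit a $K_\ell^3$-free triple system of density $1-(2/(\ell-1))^2 - o(1)$. The construction partitions the vertex set into $\ell-1$ nearly equal blocks $V_1,\ldots,V_{\ell-1}$, takes a suitable base triple system supported on these blocks, and iterates it inside each block; for $\ell=4$ this yields density $5/9 = 1-(2/3)^2$, and as Kostochka showed there are several essentially different such constructions. $K_\ell^3$-freeness follows from a short pigeonhole argument that any $\ell$-set of vertices is forced to contain a triple deliberately omitted from the construction, and a direct edge count together with the iteration limit gives exactly the conjectured density.

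For the upper bound I would attempt the standard toolkit: (a) a stability-plus-exactness argument, showing that every near-extremal $K_\ell^3$-free triple system is close in edit distance to a conjectured construction and then bootstrapping to the exact bound; (b) a Lagrangian/symmetrization attack in the spirit of Motzkin--Straus and Frankl--R\"odl, using that the Lagrangian of the conjectured blowup matches the conjectured density up to the normalizing factor; (c) the flag algebra semidefinite method of Razborov, which currently gives the best known numerical upper bounds (e.g.\ $\pi(K_4^3) \le 0.5615$).

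\textbf{The main obstacle.} Strategy (a) is structurally blocked, and this is exactly the phenomenon the present paper is built around: Kostochka's constructions already show that the $K_4^3$ problem, assuming Tur\'an's conjecture, admits many non-isomorphic near-extremal constructions, so no classical stability theorem can hold. Strategy (b) fails because the natural symmetrizations on triple systems do not preserve $K_\ell^3$-freeness, in contrast to the graph case $r=2$, where Zykov symmetrization gives Tur\'an's theorem almost immediately. Strategy (c) has substantially narrowed the gap but has not closed it despite considerable effort. A successful proof will very likely require a genuinely new technique capable of handling an entire moduli of near-extremal constructions at once, rather than pinning down a unique extremal structure.
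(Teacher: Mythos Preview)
Your assessment is correct: this statement is a \emph{conjecture} in the paper, not a theorem, and the paper makes no attempt to prove it. It is invoked only as a hypothesis (e.g.\ in Proposition~\ref{prop-stability-number-K43}, which deduces $\xi(K_4^3)=\infty$ \emph{assuming} the conjecture). So there is no proof in the paper to compare your proposal against.

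Your discussion is an accurate summary of the state of the art: the lower bound is the classical Tur\'an-type construction (with Kostochka's variants for $\ell=4$), and the upper bound remains open for every $\ell\ge 4$. Your identification of the obstacle to a stability approach is precisely the theme of the paper --- indeed, the paper's Proposition~\ref{prop-stability-number-K43} formalizes your point that Kostochka's constructions rule out any finite stability number for $K_4^3$ under the conjecture. One small correction: the best flag-algebra bound cited in the paper is Razborov's $\pi(K_4^3)\le 0.561666$, slightly tighter than the $0.5615$ you mention.
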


The case $\ell = 4$ above, which states that $\pi(K_{4}^{3}) = 5/9$ has generated a lot of interest and activity over the years.
Many constructions (e.g. see \cite{BR83,KO82,FO88}) are known to achieve the value in Conjecture \ref{Turan-conj-r-gp} for $\ell = 4$,
and that is perhaps one of the reasons why it is so challenging.
On the other hand, successively better upper bounds for $\pi(K_{4}^{3})$ were obtained by
de Caen \cite{DC88}, Giraud (see \cite{CL99}), Chung and Lu  \cite{CL99}, and Razborov \cite{RA10}.
The current record is $\pi(K_{4}^{3}) \le 0.561666$, which was obtained by Razborov  \cite{RA10} using flag algebra machinery.

Many families $\mathcal{F}$ have the property that there is a unique $\mathcal{F}$-free graph (or hypergraph)
$\mathcal{G}$ on $n$ vertices achieving $\textrm{ex}(n,\mathcal{F})$,
and moreover, any $\mathcal{F}$-free graph (or hypergraph) $\mathcal{H}$ of size close to $\textrm{ex}(n,\mathcal{F})$ can be transformed to $\mathcal{G}$
by deleting and adding very few edges.
Such a property is called the stability of $\mathcal{F}$.
The first stability theorem was proved independently by Erd\H{o}s and Simonovits \cite{SI68}.

\begin{thm}[Erd\H{o}s-Simonovits \cite{SI68}]\label{Erdos-Simo-stability}
Let $\ell \ge 2$. Then for every $\delta > 0$ there exists an $\epsilon > 0$ and $n_0$ such that the following statement holds for all $n \ge n_0$.
Every $K_{\ell+1}$-free graph on $n$ vertices with at least $(1-\epsilon) t(n,\ell)$ edges can be transformed to $T(n,\ell)$
by deleting and adding at most $\delta n^2$ edges.
\end{thm}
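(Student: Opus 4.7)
The plan is induction on $\ell$, with Tur\'{a}n's theorem providing the quantitative benchmark at each step. For the base case $\ell = 2$, any triangle-free graph $G$ with $e(G) \ge (1-\epsilon)\lfloor n^2/4\rfloor$ is close to a balanced complete bipartite graph. Picking a vertex $x$ of maximum degree $D$, the triangle-free condition forces $N(x)$ to be independent; the bound $\sum_{y \in V\setminus N(x)} d(y) \le D(n-D)$ combined with $e(G) \ge (1-\epsilon)n^2/4$ forces $D \approx n/2$ and shows that both the number of edges inside $V\setminus N(x)$ and the number of missing cross-edges between $N(x)$ and $V\setminus N(x)$ are each $O(\epsilon n^2)$, giving $\delta n^2$-closeness to $T(n,2)$.

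For the inductive step $\ell-1 \to \ell$, I would first locate a vertex $v$ satisfying both $d(v) \ge (1 - 1/\ell - \sqrt{\epsilon})n$ and $e(G[N(v)]) \ge (1-\sqrt{\epsilon})\,t(d(v), \ell-1)$. A double averaging argument produces such $v$: the degree sum $2e(G) \ge (1-\epsilon)(1-1/\ell)n^2$ forces most vertices to have degree close to $(1-1/\ell)n$, and a separate averaging of $\sum_v e(G[N(v)])$ (which matches the Tur\'{a}n target to within an $\epsilon$-dependent error under the edge bound on $G$) forces most vertices to satisfy both conditions simultaneously. The subgraph $G[N(v)]$ is $K_\ell$-free with near-Tur\'{a}n edge count, so by the inductive hypothesis (applied with parameter $\delta/4$), it admits a partition $V_1,\dots,V_{\ell-1}$ of $N(v)$ within $(\delta/4)n^2$ of $T(d(v),\ell-1)$. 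Set $V_\ell := V(G)\setminus N(v)$, of size $\approx n/\ell$.

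To compare $(V_1,\dots,V_\ell)$ with $T(n,\ell)$, one decomposes the edit distance as intra-part edges of $G$ plus missing inter-part edges. Using $e(G) \ge (1-\epsilon)t(n,\ell)$ and $|V_i| \approx n/\ell$, this total equals $2\sum_i e(G[V_i]) + (t(n,\ell) - e(G)) \le 2\sum_i e(G[V_i]) + \epsilon\, t(n,\ell)$. The intra-part edges in $V_1,\dots,V_{\ell-1}$ are bounded by induction; the intra-part edges within $V_\ell$ are bounded by a $K_{\ell+1}$-avoidance argument: for each edge $uu'$ inside $V_\ell$, the high-degree assumption on $u, u'$ combined with the near-Tur\'{a}n structure of $G[N(v)]$ produces a $K_{\ell-1}$ with one vertex in each $V_j \cap N(u) \cap N(u')$, giving a $K_{\ell+1}$ on $\{v,u,u'\}$ together with that $K_{\ell-1}$ -- contradiction.

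The main obstacle is the bookkeeping of errors: at each inductive step, the choice of $v$, the induction on $N(v)$, and the $K_{\ell+1}$-avoidance in $V_\ell$ each contribute $\epsilon$-dependent losses, so $\epsilon$ must be chosen as a rapidly decreasing function of $\delta$ and $\ell$. A useful first move is to delete the (small) set of vertices of degree below $(1-1/\ell-\eta)n$ -- their removal only slightly decreases the edge count -- thereby upgrading to a uniform minimum-degree assumption before running the inductive argument.
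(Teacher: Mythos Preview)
The paper does not prove this statement: Theorem~\ref{Erdos-Simo-stability} is quoted as a classical result of Erd\H{o}s and Simonovits with a citation to \cite{SI68}, and no proof is supplied. So there is nothing in the paper to compare your argument against.

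As for the sketch itself, the inductive strategy is a standard and viable route, but two points need attention. First, a slip: in the step ``a $K_{\ell+1}$ on $\{v,u,u'\}$ together with that $K_{\ell-1}$'' the vertex $v$ should not appear, since $u,u'\in V_\ell=V\setminus N(v)$ are non-neighbours of $v$; the intended clique is on $\{u,u'\}$ together with the transversal $K_{\ell-1}$. Second, and more substantively, the claim that \emph{every} edge $uu'$ inside $V_\ell$ yields such a transversal $K_{\ell-1}$ is not justified by the minimum-degree hypothesis alone: from $d(u),d(u')\ge (1-1/\ell-\eta)n$ one only gets $|N(u)\cap N(u')|\ge (1-2/\ell-2\eta)n$, and since each $|V_j|\approx n/\ell$ this does not force $N(u)\cap N(u')$ to meet every $V_j$ once $\ell\ge 3$. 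You therefore cannot conclude $e(G[V_\ell])=0$; you need an additional counting step (for instance, first showing that all but $o(n^2)$ of the cross-edges between $V_\ell$ and $V_1\cup\cdots\cup V_{\ell-1}$ are present, and then using that to bound $e(G[V_\ell])$ via your identity $\text{edit}=2\sum_i e(G[V_i])+(t(n,\ell)-e(G))$). With that repair the outline goes through.
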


The stability phenomenon has been used to determine $\textrm{ex}(n,\mathcal{F})$ exactly in many cases.
It was first used by Simonovits in \cite{SI68} to determine $\textrm{ex}(n, \mathcal{F})$ exactly for all color-critical graphs and large $n$,
and then by several authors (e.g. see \cite{DF00,FV13,FPS05,ZS05,KS05a,KS05b,MP07,MPS11,PI13}) to prove exact results for hypergraphs.

However, there are many Tur\'{a}n problems for hypergraphs that (perhaps) do not have the stability property.
The example $K_{4}^{3}$ we mentioned before was shown to have exponentially many extremal constructions  in the number of vertices (see Kostochka~\cite{KO82} and Brown~\cite{BR83}).
We will prove (Proposition \ref{prop-stability-number-K43}) that these constructions can be used to show that $K_{4}^{3}$ does not have the stability property
(assuming Conjecture \ref{Turan-conj-r-gp} is true).
For $K_{\ell}^{3}$ with $\ell \ge 5$, different near-extremal constructions were given by Sidorenko \cite{AS95},
and Keevash and the second author \cite{KE11}.
Although we do not provide the details, these  also show that $K_{\ell}^{3}$ does not have stability (assuming Conjecture \ref{Turan-conj-r-gp} is true).

Another example in which the forbidden family consists of infinitely many $3$-graphs arises from the following longstanding
conjecture due to Erd\H{o}s and S\'{o}s.
For an $r$-graph $\mathcal{H}$ we use $V(\mathcal{H})$ to denote its vertex set and $v(\mathcal{H})=|V(\mathcal{H})|$.
The link of $v \in V(\mathcal{H})$ is
\begin{align}
L_{\mathcal{H}}(v) := \left\{ A\in \binom{V(\mathcal{H})}{r-1}: \{v\} \cup A \in \mathcal{H} \right\}. \notag
\end{align}
We will omit the subscript $\mathcal{H}$ if it is clear from context.

\begin{conj}[Erd\H{o}s-S\'{o}s, see \cite{FF84}]\label{bipart-link-conj}
Let $\mathcal{H}$ be a $3$-graph with $n$-vertices.
If $L(v)$ is bipartite for all $v \in V(\mathcal{H})$,
then $|\mathcal{H}| \le (1/4 + o(1))\binom{n}{3}$.
\end{conj}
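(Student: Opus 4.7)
Since this statement is a longstanding open conjecture, what follows is a speculative plan rather than a proof. The natural starting point is the trivial link bound: since each $L(v)$ is bipartite on at most $n-1$ vertices, Mantel's theorem gives $|L(v)|\le \lfloor(n-1)^2/4\rfloor$, so summing yields $3|\mathcal{H}|=\sum_v|L(v)|\le n(n-1)^2/4$ and hence $|\mathcal{H}|\le(1/2+o(1))\binom{n}{3}$. The whole content of the conjecture is to improve this factor of $1/2$ to $1/4$, a true factor-of-two gain beyond the elementary estimate.

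My first approach would be to exploit compatibility between the bipartitions of different links. For each $u$, fix a $2$-coloring $\chi_u:V\setminus\{u\}\to\{0,1\}$ witnessing bipartiteness of $L(u)$; every edge $\{u,v,w\}\in\mathcal{H}$ then imposes the three simultaneous constraints $\chi_u(v)\ne\chi_u(w)$, $\chi_v(u)\ne\chi_v(w)$, and $\chi_w(u)\ne\chi_w(v)$. The plan is to use these overlapping constraints to locate a large vertex subset $W\subseteq V$ on which the colorings $\chi_u|_W$ cohere, up to a global flip, with a single $2$-coloring $\chi:W\to\{0,1\}$. This would reduce $\mathcal{H}[W]$ to an essentially bipartite object whose edge count can be bounded via shadow or entropy estimates, after which a supersaturation or $3$-graph regularity step transfers the bound from $W$ back to all of $V$.

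A parallel attack would be a flag-algebra calculation in the style of Razborov~\cite{RA10}. The forbidden family here is the infinite set of ``odd-cycle cones'' obtained by taking an odd cycle $C_{2k+1}$ and adding a new vertex lying in a triple with every edge of the cycle; truncating to small $k$ yields a sequence of finite semidefinite programs whose optima upper bound $\pi(\mathcal{F})$. Even a modest improvement over $1/2$ from such a computation would be informative, and measuring how close the SDP optima get to $1/4$ would both gauge the difficulty and hint at which configurations dominate in a putative extremal example.

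The principal obstacle, and the reason the conjecture has resisted attack for decades, is the lack of any known construction meeting density $1/4$: the complete $3$-partite $3$-graph, which is the most natural candidate with all links bipartite, only reaches density $2/9$, and I do not see a denser example. Without a candidate extremal configuration neither the structural program above (which needs a target stable structure to pivot on) nor the flag-algebra program (which typically stalls well above the conjectured density without an extremal certificate) is likely to succeed, and a genuinely new idea --- either a denser lower-bound construction or a conceptually new upper-bound technique --- seems necessary.
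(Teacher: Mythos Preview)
The statement is presented in the paper as an open conjecture (Conjecture~\ref{bipart-link-conj}), not as a theorem with a proof, so there is no argument in the paper to compare your proposal against. You correctly identify it as open and appropriately offer only a speculative plan.

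That said, your final paragraph contains a substantive factual error that undermines your assessment of the difficulty. You write that ``the principal obstacle \ldots\ is the lack of any known construction meeting density $1/4$'' and that you ``do not see a denser example'' than the balanced complete $3$-partite $3$-graph at $2/9$. This is incorrect: constructions achieving density $1/4+o(1)$ with all links bipartite are known, and indeed the paper itself remarks (immediately after stating the conjecture) that several \emph{different} near-extremal constructions are given in~\cite{MPS11}, which is precisely why the conjecture, if true, would fail to have the stability property. So the situation is the reverse of what you describe: the lower bound is settled and the entire problem is the upper bound. Consequently your diagnosis that ``neither the structural program \ldots\ nor the flag-algebra program \ldots\ is likely to succeed'' for lack of an extremal candidate is based on a false premise; the real obstruction is rather the \emph{multiplicity} of very different extremal candidates, which is exactly the theme of the present paper.
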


If Conjecture \ref{bipart-link-conj} is true,
then it also does  not have the stability property as there are several different near-extremal constructions given in \cite{MPS11}.

The absence of stability seems to be a fundamental barrier in determining the Tur\'{a}n numbers of some families.
Indeed, the Tur\'{a}n numbers of the examples we presented above are not known, even asymptotically,
and in fact, no Tur\'{a}n number of a family without the stability property has been determined.

This paper provides the first such example.
We construct a family $\mathcal{M}$ of $3$-graphs,
prove that $\mathcal{M}$ does not have the stability property,
and  determine $\pi(\mathcal{M})$,
and even ${\rm ex}(n,\mathcal{M})$ for infinitely many $n$ (Theorems \ref{thm-Turan-num-M} and \ref{thm-2-stable}).

The present paper has a slightly similar flavor as \cite{Pi14} in the sense that
we will define the extremal hypergraphs $\mathcal{G}_{n}^{1}$ and $\mathcal{G}_{n}^2$ first,
and then define the forbidden family $\mathcal{M}$,
which is a suitably chosen family based on $\mathcal{G}_{n}^{1}$ and $\mathcal{G}_{n}^2$.

In order to state our results formally, we need some definitions.

\begin{dfn}\label{dfn-trans-num}
Let $r \ge 2$ and $\mathcal{H}$ be an $r$-graph.
The transversal number of $\mathcal{H}$ is
\[
\tau(\mathcal{H}) := \min\left\{ |S| :S \subset V(\mathcal{H}) \text{ such that } S\cap E \neq \emptyset \text{ for all } E \in \mathcal{H} \right\}.
\]
We set $\tau(\mathcal{H}) = 0$ if $\mathcal{H}$ is an empty graph.
\end{dfn}

Let $\ell \ge r\ge 2$ and $\mathcal{K}_{\ell+1}^{r}$ be the collection of all $r$-graphs $F$
on at most $(\ell+1)+(r-2)\binom{\ell+1}{2}$ vertices such that
for some $({\ell}+1)$-set $S$, which will be called the core of $F$, every pair $\{u,v\}\subset S$ is covered by an edge in $F$
\footnote{The original definition of $\mathcal{K}_{\ell+1}^{r}$ in \cite{MU06}
requires that $F$ has at most $\binom{{\ell}+1}{2}$ edges.
The new definition we used here will make our proofs simpler.
Notice that $\mathcal{K}_{\ell+1}^{r}$ is a finite family in both definitions.}.
Let $V_{1} \cup \cdots \cup V_{\ell}$ be a partition of $[n] := \{1,\ldots,n\}$
with each $V_{i}$ of size either $\lf n/\ell \rf$ or $\lc n/\ell \rc$.
The generalized Tur\'{a}n graph $T_{r}(n,\ell)$ is the collection of all $r$-subsets of $[n]$ that have
at most one vertex in each $V_{i}$. Let $t_{r}(n,\ell) = |T_{r}(n,\ell)|$.
It was shown by the second author \cite{MU06} that $\textrm{ex}(n,\mathcal{K}_{\ell+1}^{r}) = t_{r}(n,\ell)$ and $T_{r}(n,\ell)$ is the unique
$\mathcal{K}_{\ell+1}^{r}$-free $r$-graph on $n$ vertices with exactly $t_{r}(n,\ell)$ edges.

Suppose that $\mathcal{T}$ is an $r$-graph on $s$ vertices and $t = (t_1,\ldots,t_s)$ with each $t_i$ a positive integer.
Then the blowup $\mathcal{T}(t)$ of $\mathcal{T}$ is obtained from $\mathcal{T}$ by replacing each vertex $i$ by a set of size $t_i$,
and replacing every edge in $\mathcal{T}$ by the corresponding complete $r$-partite $r$-graph.

An $r$-graph $\mathcal{S}$ is a star if all edges in $\mathcal{S}$ contain a fixed vertex $v$, which is called the center of $\mathcal{S}$.

\begin{dfn}\label{dfn-G1-G2}
Let $|A| = \lf n/3 \rf$ and $|B| = \lc 2n/3 \rc$ with $A \cap B = \emptyset$.
Define
\[
\mathcal{G}^{1}_{n} := \left\{ abb': a\in A \text{ and } \{b,b'\}\subset B \right\}.
\]
Let $\mathcal{G}^{2}_{6}$ be the $3$-graph with vertex set $[6]$ whose complement is
\[
\overline{\mathcal{G}^{2}_{6}} := \{123, 126, 345, 456\}.
\]
For $n > 6$ let $\mathcal{G}^{2}_{n}$ be a $3$-graph on $n$ vertices
which is a blowup of $\mathcal{G}^{2}_{6}$ with the maximum number of edges.
\end{dfn}

{\bf Remarks.}
\begin{itemize}
\item
Notice that $\mathcal{G}^{1}_{n}$ is a (unbalanced) blowup of a star.
\item
Simple calculations show that each part in $\mathcal{G}^{2}_{n}$ has size either $\lf n/6 \rf$ or $\lc n/6 \rc$.
\item
For $i =1,2$, let $g_{i}(n) = |\mathcal{G}_{n}^{i}|$.
Then $\lim_{n \to \infty} g_{i}(n) / n^3 = 2/27$.
\end{itemize}

\begin{figure}[htbp]
\centering
\subfigure[The $3$-graph $\mathcal{G}^1_{n}$.]{
\begin{minipage}[t]{0.45\linewidth}
\centering
\begin{tikzpicture}[xscale=3.5,yscale=3.5]
    \node (a) at (-0.4,0.2) {};
    \node (b) at (0.4,0.1) {};
    \node (b') at (0.4,0.3) {};
    \node (c1) at (0,0) {};
    \node (c2) at (0,-0.1) {};
    \node (c3) at (0,-0.2) {};
    \fill (a) circle (0.015) node [right] {$a$};
    \fill (b) circle (0.015) node [left] {$b$};
    \fill (b') circle (0.015) node [left] {$b'$};
    \fill (c1) circle (0.012);
    \fill (c2) circle (0.012);
    \fill (c3) circle (0.012);
    \draw[rotate=0,line width=0.8pt] (-0.4,0) ellipse [x radius=0.15, y radius=0.35];
    \draw[rotate=0,line width=0.8pt] (0.4,0) ellipse [x radius=0.2, y radius=0.7];
    \draw[line width=0.8pt,color=sqsqsq,fill=sqsqsq,fill opacity=0.15]
        (-0.4-0.05,0.2) to [out = 90, in = 180] (-0.4,0.2+0.05) to [out = 0, in = 180]
        (0.4,0.3+0.05) to [out = 0, in = 0] (0.4,0.1-0.05) to [out = 180, in = 0] (-0.4,0.2-0.05)
        to [out = 180, in = 270] (-0.4-0.05,0.2);
    \node at (-0.4,-0.35-0.07) {$A$};
    \node at (0.4,-0.7-0.07) {$B$};
\end{tikzpicture}
\end{minipage}
}
\subfigure[The complement of $\mathcal{G}^2_6$.]{
\begin{minipage}[t]{0.45\linewidth}
\centering
\begin{tikzpicture}[xscale=2.5,yscale=2.5]
    \node (a1) at (-0.5,0.5) {};
    \node (a2) at (0.5,0.5) {};
    \node (a3) at (0.5,-0.5) {};
    \node (a4) at (-0.5,-0.5) {};
    \node (b1) at (-1,0) {};
    \node (b2) at (1,0) {};
    \fill (a1) circle (0.02) node [right] {$1$};
    \fill (a2) circle (0.02) node [left] {$2$};
    \fill (a3) circle (0.02) node [left] {$4$};
    \fill (a4) circle (0.02) node [right] {$5$};
    \fill (b1) circle (0.02) node at (-1-0.2,0) {$6$};
    \fill (b2) circle (0.02) node at (1+0.2,0) {$3$};
    \draw[line width=0.8pt,color=sqsqsq,fill=sqsqsq,fill opacity=0.15]
        (0.5+0.1,0.5) to [out = 90, in = 0] (-0.5,0.5+0.1) to [out = 180, in = 180] (-1,0-0.1)
        to [out = 0, in = 180] (-0.5,0.5-0.1) to [out = 0, in = 270] (0.5+0.1,0.5);
    \draw[line width=0.8pt,color=sqsqsq,fill=sqsqsq,fill opacity=0.15]
        (-0.5-0.1,0.5) to [out = 90, in = 180] (0.5,0.5+0.1) to [out = 0, in = 0] (1,0-0.1)
        to [out = 180, in = 0] (0.5,0.5-0.1) to [out = 180, in = 270] (-0.5-0.1,0.5);
    \draw[line width=0.8pt,color=sqsqsq,fill=sqsqsq,fill opacity=0.15]
        (0.5+0.1,-0.5) to [out = 270, in = 0] (-0.5,-0.5-0.1) to [out = 180, in = 180] (-1,0+0.1)
        to [out = 0, in = 180] (-0.5,-0.5+0.1) to [out = 0, in = 90] (0.5+0.1,-0.5);
    \draw[line width=0.8pt,color=sqsqsq,fill=sqsqsq,fill opacity=0.15]
        (-0.5-0.1,-0.5) to [out = 270, in = 180] (0.5,-0.5-0.1) to [out = 0, in = 0] (1,0+0.1)
        to [out = 180, in = 0] (0.5,-0.5+0.1) to [out = 180, in = 90] (-0.5-0.1,-0.5);
\end{tikzpicture}
\end{minipage}
}
\centering
\caption{$\mathcal{G}^1$ and $\overline{\mathcal{G}^2_6}$.}
\end{figure}
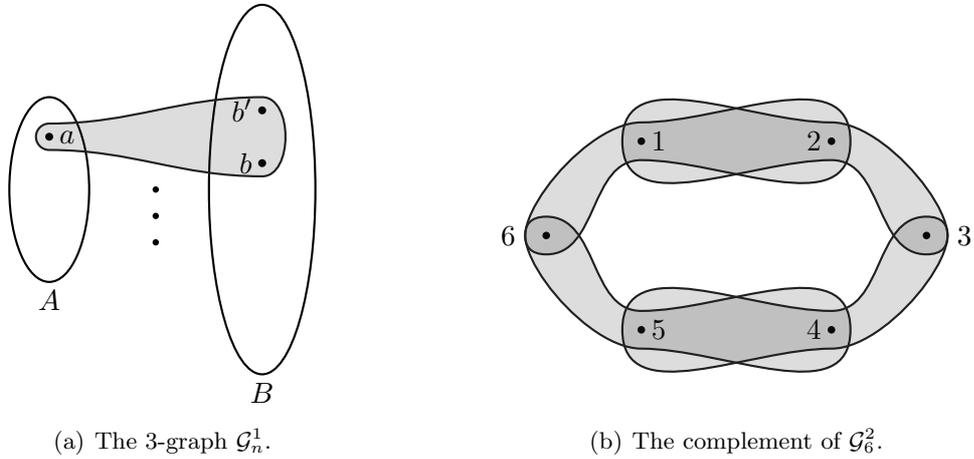

\begin{dfn}\label{dfn-M}
The family $\mathcal{M}$ is the union of the following three finite families.
\begin{enumerate}[label=(\alph*)]
\item
$M_1$ is the set containing the complete $3$-graph on five vertices with one edge removed, $M_1 = \left\{ K_{5}^{3-} \right\}$.
\item
$M_2$ is the collection of all $3$-graphs in $\mathcal{K}_{7}^{3}$ with a core  whose induced subgraph  has transversal number at least two.
\item
$M_3$ is the collection of all $3$-graphs  $F\in \mathcal{K}_{6}^{3}$ such that both $F\not\subset \mathcal{G}_{n}^{1}$ and
$F\not\subset \mathcal{G}_{n}^{2}$ for all positive integers $n$.
\end{enumerate}
\end{dfn}

Our first result is about the Tur\'{a}n number of $\mathcal{M}$.
\begin{thm}\label{thm-Turan-num-M}
The inequality $\textrm{ex}(n,\mathcal{M}) \le 2n^3/27$ holds for all positive integers $n$.
Moreover, equality holds whenever $n$ is a multiple of six.
\end{thm}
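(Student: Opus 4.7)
The plan is to prove the upper bound via the Lagrangian method and verify the lower bound by an explicit construction. Recall the Lagrangian
$\lambda(\mathcal{H}) := \max\bigl\{\sum_{e \in \mathcal{H}}\prod_{v \in e} x_v : x_v \geq 0,\ \sum_{v} x_v = 1\bigr\}$.
Plugging in the uniform weight $x_v = 1/n$ gives $|\mathcal{H}|/n^3 \leq \lambda(\mathcal{H})$, so the bound $|\mathcal{H}| \leq 2n^3/27$ reduces to the key inequality $\lambda(\mathcal{H}) \leq 2/27$ for every $\mathcal{M}$-free 3-graph $\mathcal{H}$. For the lower bound when $6 \mid n$, I would verify that $\mathcal{G}^2_n$ is $\mathcal{M}$-free of size $16(n/6)^3 = 2n^3/27$: since $\mathcal{G}^2_n$ is the balanced 6-partite blowup of $\mathcal{G}^2_6$, two distinct vertices are covered by an edge iff they lie in different parts, so the shadow is 6-chromatic and admits no 7-clique (making $M_2$ vacuous); every 6-clique in the shadow is a transversal and induces a copy of $\mathcal{G}^2_6 \subset \mathcal{G}^2$ (giving $M_3$-freeness); and checking that each 5-subset of $\mathcal{G}^2_6$ spans at most 8 edges handles $M_1$.

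To prove $\lambda(\mathcal{H}) \leq 2/27$, pick an optimizer $x^\ast$ whose support $T = \{v : x^\ast_v > 0\}$ has minimum possible size. A standard Motzkin--Straus / KKT argument forces $\mathcal{H}[T]$ to be \emph{covering}: every pair of vertices in $T$ lies in an edge of $\mathcal{H}[T]$. I would then case on $|T|$. If $|T| \leq 5$, then $M_1$-freeness caps $|\mathcal{H}[T]|$ (at most 8 edges when $|T| = 5$) and a short Lagrangian computation gives $\lambda(\mathcal{H}[T]) < 2/27$. If $|T| = 6$, then by the covering property $\mathcal{H}[T] \in \mathcal{K}_6^3$, so $M_3$-freeness forces $\mathcal{H}[T]$ to be a subgraph of $\mathcal{G}^1_m$ or $\mathcal{G}^2_m$ for some $m$; explicit computation shows $\lambda(\mathcal{G}^1_m) < 2/27$ for every finite $m$ and $\lambda(\mathcal{G}^2_m) = \lambda(\mathcal{G}^2_6) = 2/27$ by the blowup-invariance of the Lagrangian, hence $\lambda(\mathcal{H}[T]) \leq 2/27$.

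The main obstacle is the case $|T| \geq 7$, which I would resolve by showing that $\mathcal{H}[T]$ is itself a star. Since $T$ is covering, every 7-subset $S \subset T$ is also covered, so $M_2$-freeness yields $\tau(\mathcal{H}[S]) \leq 1$; that is, $\mathcal{H}[S]$ is a star (possibly empty). It follows first that any two disjoint edges of $\mathcal{H}[T]$ would span six vertices extendable to a 7-subset whose induced 3-graph could not be a star, so $\mathcal{H}[T]$ is intersecting; and second, applying the same extension to any three pairwise-intersecting edges (whose union has at most six vertices) gives them a common vertex. If $\mathcal{H}[T]$ had no global common vertex, take any edge $e = \{v_1, v_2, v_3\}$ and edges $f_{v_i}$ with $v_i \notin f_{v_i}$ for $i = 1, 2, 3$; the four edges $\{e, f_{v_1}, f_{v_2}, f_{v_3}\}$ have no common vertex, but they pairwise intersect and every three share, so the structural analysis forces them to form a copy of $K_4^{(3)}$ on four vertices. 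Embedding those four vertices into a 7-subset contradicts the star property; hence $\mathcal{H}[T]$ is a star, and a direct calculation gives $\lambda(\mathcal{H}[T]) = 2(|T|-2)/(27(|T|-1)) < 2/27$. Combining the three cases yields $\lambda(\mathcal{H}) \leq 2/27$ and completes the proof.
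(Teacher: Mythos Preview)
Your proposal is correct and follows essentially the same architecture as the paper: reduce to a $2$-covered ``kernel'' $\mathcal{T}$, then bound $\lambda(\mathcal{T})\le 2/27$ by casing on $|T|$, with the star characterization handling $|T|\ge 7$, the $M_3$-definition handling $|T|=6$, and the $K_5^{3-}$-freeness handling $|T|\le 5$. The lower-bound verification via $\mathcal{G}^2_n$ is also the same.

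The one genuine difference is \emph{how} you obtain the $2$-covered kernel. The paper runs a symmetrization algorithm (its Algorithm~1): it repeatedly merges non-adjacent non-equivalent vertices, uses Lemma~\ref{lemma-M=homM} (that $\mathcal{M}$-free $\Leftrightarrow$ $\mathcal{M}$-hom-free) to argue that $\mathcal{M}$-freeness is preserved, and then takes one representative per equivalence class. You instead take a Lagrangian optimizer of minimum support and invoke the standard Motzkin--Straus/KKT fact that any two support vertices not sharing an edge would allow a weight shift reducing the support. Both routes are standard; yours is arguably more direct for the bare Tur\'an bound since it avoids the hom-freeness lemma, while the paper's symmetrization is reused later (Algorithm~2) as the engine for the stability theorem. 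Your proof of the star lemma for $|T|\ge 7$ is also organized a bit differently from the paper's Lemma~\ref{lemma-trans-star} (you pass through the ``every three edges share a vertex'' step and the $K_4^3$ configuration), but it is correct and reaches the same conclusion.
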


For an $r$-graph $\mathcal{H}$
the shadow of $\mathcal{H}$ is
\[
\partial \mathcal{H} := \left\{ A \in \binom{V(\mathcal{H})}{r-1}: \exists B \in \mathcal{H} \text{ such that } A\subset B \right\}.
\]
Note that both $\mathcal{G}_{n}^{1}$ and $\mathcal{G}_{n}^{2}$ are $\mathcal{M}$-free and
$g_1(n) \sim g_2(n) \sim 2n^3/27$.
Moreover, it is easy to see that transforming $\mathcal{G}_{n}^{1}$ to $\mathcal{G}_{n}^{2}$
requires us to delete and add $\Omega(n^3)$ edges.
Indeed,
$\partial\mathcal{G}_{n}^{1}$ contains a clique on $\lfloor 2n/3 \rfloor$ vertices,
whereas $\partial\mathcal{G}_{n}^{2}$ has clique number six.
By Tur\'{a}n's theorem, one must thus delete strictly more that $\left(1-\pi(K_7)\right)\binom{\lfloor 2n/3 \rfloor}{2} = \Omega(n^2)$
edges from $\partial\mathcal{G}_{n}^{2}$ to obtain a copy of $\partial\mathcal{G}_{n}^{2}$.
Since every edge in $\partial \mathcal{G}_{n}^{1}$
is covered by $\Omega(n)$ edges in $\mathcal{G}_{n}^{1}$,
we need to remove at least $\Omega(n^3)$ edges from
$\mathcal{G}_{n}^{1}$ before getting $\mathcal{G}_{n}^{2}$.
So this proves that $\mathcal{M}$ does not have the stability property (in the sense of Theorem \ref{Erdos-Simo-stability}).

In order to capture the structural property of families with more than one near-extremal structure,
the second author $\cite{MU07}$ introduced the concept of $t$-stable families, which is an extension of the classical definition of stability.
\begin{dfn}[$t$-stable, see \cite{MU07}]\label{dfn-t-stable}
Let $r \ge 2$ and $t \ge 1$ and $\mathcal{F}$ be a family of $r$-graphs.
Then $\mathcal{F}$ is $t$-stable if there exists $m_0$ and $r$-graphs $\mathcal{H}_{m}^{1},\ldots,\mathcal{H}_{m}^{t}$ on $m$ vertices
for every $m \ge m_0$ such that the following holds.
For every $\delta >0$, there exists $\epsilon > 0$ and $n_0$ such that for all $n \ge n_0$,
if $\mathcal{H}$ is an $\mathcal{F}$-free $r$-graph on $n$ vertices with
\[
|\mathcal{H}| > (1-\epsilon) {\rm ex}(n,\mathcal{F}),
\]
then $\mathcal{H}$ can be transformed to some $\mathcal{H}_{n}^{i}$ by adding and removing at most $\delta |\mathcal{H}|$ edges.
Say $\mathcal{F}$ is stable if it is $1$-stable.
\end{dfn}

According to the definition, if a family $\mathcal{F}$ is $t$-stable, then it is $s$-stable for all $s > t$,
since we can get $s$ constructions $\mathcal{G}_{m}^{1}, \ldots, \mathcal{G}_{m}^{s}$ by simply duplicating some
$\mathcal{G}_{m}^{i} \in \{\mathcal{G}_{m}^{1}, \ldots, \mathcal{G}_{m}^{t}\}$  into $s-t$ copies of itself.
However, we are actually interested in the minimum integer $t$ such that $\mathcal{F}$ is $t$-stable.

\begin{dfn}[Stability number]\label{dfn-stability-number}
Let $\mathcal{F}$ be a family of $r$-graphs.
The stability number of $\mathcal{F}$, denoted by $\xi(\mathcal{F})$, is the minimum integer $t$ such that $\mathcal{F}$ is $t$-stable.
If there is no such integer $t$, then we let $\xi(\mathcal{F}) = \infty$.
\end{dfn}

Note that in \cite{PI08} Pikhurko also defined $t$-stable families independently.
Roughly speaking, a family $\mathcal{F}$ is $t$-stable if there exist $t$ near-extremal constructions,
and every $\mathcal{F}$-free graph (or hypergraph) of size close to $\textrm{ex}(n,\mathcal{F})$
is structurally close to one of these near-extremal constructions.
Although the concept of $t$-stable families was raised over a decade ago,
no example of $t$-stable families are known for any $t \ge 2$ before this work.
However, if we assume that Tur\'{a}n's conjecture is true, then
the following result shows that the stability number of $K_{4}^{3}$ is infinity.

\begin{prop}\label{prop-stability-number-K43}
If Conjecture \ref{Turan-conj-r-gp} is true, then $\xi(K_{4}^{3}) = \infty$.
\end{prop}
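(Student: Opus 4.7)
The plan is a pigeonhole argument built on the classical Kostochka--Brown constructions. Suppose for contradiction that $\xi(K_{4}^{3}) \le t$ for some finite $t \ge 1$, and let $\mathcal{H}_{m}^{1}, \ldots, \mathcal{H}_{m}^{t}$ be the templates guaranteed by Definition \ref{dfn-t-stable}. Kostochka \cite{KO82} (see also Brown \cite{BR83}) produced, for each $n$, a family $\mathcal{K}(n)$ of $K_{4}^{3}$-free $3$-graphs on $n$ vertices with the following three properties that I will use: $|\mathcal{K}(n)| \to \infty$ as $n \to \infty$; every $K \in \mathcal{K}(n)$ satisfies $|K| = (5/9 + o(1))\binom{n}{3}$; and there is an absolute constant $\gamma > 0$ such that any two distinct $K, K' \in \mathcal{K}(n)$ have edit distance at least $\gamma n^{3}$, where the edit distance is minimised over bijections of the vertex set.

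Assuming Conjecture \ref{Turan-conj-r-gp}, $\textrm{ex}(n, K_{4}^{3}) = (5/9 + o(1))\binom{n}{3}$, so every member of $\mathcal{K}(n)$ is near-extremal for $K_{4}^{3}$: for each fixed $\epsilon > 0$ and all sufficiently large $n$, every $K \in \mathcal{K}(n)$ satisfies $|K| > (1-\epsilon)\,\textrm{ex}(n, K_{4}^{3})$. Fix $\delta = \gamma/4$. Applying Definition \ref{dfn-t-stable} to this $\delta$ yields $\epsilon > 0$ and $n_{0}$ such that for every $n \ge n_{0}$ and every $K \in \mathcal{K}(n)$, one may transform $K$ to some $\mathcal{H}_{n}^{i(K)}$ by adding and deleting at most $\delta|K| \le \delta \binom{n}{3}$ edges. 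Choose $n \ge n_{0}$ large enough that also $|\mathcal{K}(n)| > t$. Pigeonhole gives distinct $K, K' \in \mathcal{K}(n)$ with $i(K) = i(K')$, and composing the two transformations (the triangle inequality for edit distance) produces a bijection $\sigma$ with
\[
|K \,\triangle\, \sigma(K')| \;\le\; 2\delta \binom{n}{3} \;<\; \gamma n^{3},
\]
contradicting the separation property of $\mathcal{K}(n)$. This contradiction shows $\mathcal{F}$ is not $t$-stable for any finite $t$, so $\xi(K_{4}^{3}) = \infty$.

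The main obstacle is extracting the uniform edit-distance separation constant $\gamma$ from the Kostochka--Brown family, since the literature usually emphasises pairwise non-isomorphism rather than a quantitative labelled edit-distance bound. The standard way to establish it is to exploit the fact that these constructions share a common tripartition $V = A_{1} \cup A_{2} \cup A_{3}$ into equal-sized parts together with the canonical rainbow block of $\sim (n/3)^{3}$ triples, while distinct members differ through independent binary choices each of which flips $\Omega(n^{3})$ triples inside the monochromatic blocks. A short case analysis then handles any relabelling $\sigma$: if $\sigma$ fails to essentially preserve the tripartition then the canonical rainbow block alone contributes $\Omega(n^{3})$ to the symmetric difference, while if $\sigma$ does preserve it then the independent binary flips remain visible, again forcing $\Omega(n^{3})$ disagreements. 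Combined, these give the required uniform $\gamma > 0$; the remaining choices of $\epsilon$ and $n$ are routine.
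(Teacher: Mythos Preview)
Your overall pigeonhole structure is fine, but the argument rests entirely on the third property you attribute to the Kostochka--Brown family: an \emph{absolute} constant $\gamma>0$ with $|\mathcal{K}(n)|\to\infty$ and every pair of distinct members at edit distance at least $\gamma n^3$. This is not in \cite{KO82} or \cite{BR83}, you acknowledge as much, and your closing sketch does not establish it. In fact, for the one-parameter family $\mathcal{G}(n,m)$ that the paper actually uses, the claim is false: passing from $m$ to $m+1$ moves three vertices between the subparts $V_{1,j}$ and $V_{2,j}$, which changes only $O(n^2)$ edges, so consecutive members are at edit distance $O(n^2)$, not $\Omega(n^3)$. Your ``independent binary flips'' picture does not match this family, and for other variants you would still need to control arbitrary relabellings $\sigma$; the tripartition-preserving case in your sketch is precisely where the argument is missing.

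The paper avoids this difficulty in two ways. First, it does not ask for a uniform $\gamma$: to show $\xi(K_4^3)\ge t$ it suffices that the separation constant depend on $t$, and the paper takes $\delta<\min\{1/t^6,10^{-8}\}$ and selects only $t$ values $m_k=\lceil 100\delta^{1/2}kn\rceil$. Second, and this is the key idea you are missing, it bounds edit distance from below via an \emph{isomorphism invariant} rather than by analysing bijections directly. Writing $\varphi(n,m)$ for the number of induced copies of $K_4^{3-}$ in $\mathcal{G}(n,m)$, one computes $\varphi(n,m)=\frac{1}{6}m^2(n-3m)(n-3m-3)$; since adding or deleting a single edge changes this count by at most $n-3$, one gets
\[
\mathrm{ed}\bigl(\mathcal{G}(n,m_i),\mathcal{G}(n,m_j)\bigr)\;\ge\;\frac{|\varphi(n,m_j)-\varphi(n,m_i)|}{n},
\]
which is $\Omega(\delta n^3)$ for the chosen spacing of the $m_k$. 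This bypasses any case analysis on $\sigma$ and gives the required $t$ pairwise-far extremal constructions immediately.
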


Our next result gives further detail about near-extremal $\mathcal{M}$-free constructions by showing that $\mathcal{M}$
is $2$-stable with respect to $\mathcal{G}^{1}_{n}$ and $\mathcal{G}^{2}_{n}$.
More precisely, it shows that $\xi(\mathcal{M}) = 2$.

\begin{dfn}\label{dfn-semibi-G2-color}
Let $\mathcal{H}$ be a $3$-graph.
Then $\mathcal{H}$ is called semibipartite if $V(\mathcal{H})$ has a partition $A \cup B$
such that $|E \cap A| = 1$ and $|E \cap B| = 2$ for all $E \in \mathcal{H}$,
and $\mathcal{H}$ is called $\mathcal{G}^2_{6}$-colorable if it is a subgraph of a blowup of $\mathcal{G}_{6}^{2}$.
\end{dfn}

With some calculations one can get the following observation.
\begin{obs}\label{obs-size-G1-G2}
Let $\mathcal{H}$ be a $3$-graph on $n$-vertices.
If $\mathcal{H}$ is semibipartite, then $|\mathcal{H}| \le g_1(n)$.
If $\mathcal{H}$ is $\mathcal{G}^2_{6}$-colorable, then $|\mathcal{H}| \le g_2(n)$.
\end{obs}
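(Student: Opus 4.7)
The plan is to handle the two claims independently; both reduce to routine optimizations but of quite different flavors.

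For the semibipartite case, I would take the guaranteed partition $V(\mathcal{H}) = A \cup B$ with $|A| = a$ and $|B| = n-a$, and observe that since each edge has exactly one vertex in $A$ and two in $B$, we have $|\mathcal{H}| \le a\binom{n-a}{2}$. Setting $h(a) := a\binom{n-a}{2}$, the task reduces to showing that $h$, viewed as a function on $\{0,1,\ldots,n\}$, attains its maximum at $a = \lfloor n/3\rfloor$ and that this value equals $g_1(n)$. A direct computation of the discrete difference gives
\[
h(a+1) - h(a) \;=\; \frac{(n-a-1)(n-3a-2)}{2},
\]
which is non-negative for $a \le (n-2)/3$ and non-positive for $a \ge (n-2)/3$, so $h$ is unimodal on the integers. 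A quick case analysis by the residue of $n$ modulo $3$ then locates the maximum at $a = \lfloor n/3\rfloor$, with an additional tie at $a = \lceil n/3\rceil$ of equal value when $n \equiv 2 \pmod{3}$, and identifies this common value with $g_1(n) = \lfloor n/3\rfloor \binom{\lceil 2n/3\rceil}{2}$.

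For the $\mathcal{G}^2_6$-colorable case, the argument is essentially a matter of unwinding the definitions. By assumption, $\mathcal{H}$ embeds into a blowup $\mathcal{G}^2_6(t_1,\ldots,t_6)$ with $t_1+\cdots+t_6 = n$ and $t_i \ge 0$. Since $\mathcal{G}_n^2$ is by definition an $n$-vertex blowup of $\mathcal{G}^2_6$ that \emph{maximizes} the number of edges among all such blowups,
\[
|\mathcal{H}| \;\le\; |\mathcal{G}^2_6(t_1,\ldots,t_6)| \;\le\; g_2(n),
\]
which is what we want.

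The only place that requires any care is the discrete optimization in the semibipartite case: the three residue classes of $n$ modulo $3$ must be checked separately, with the $n \equiv 2 \pmod{3}$ case producing two distinct optimizers whose equality of value needs to be noted. Everything else is immediate from the definitions, so I would expect the real difficulties of the paper to lie elsewhere; this observation just packages up the extremal edge counts of the two candidate near-extremal constructions for use later.
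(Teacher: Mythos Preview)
Your proposal is correct. The paper does not actually supply a proof of this observation --- it is introduced with the phrase ``With some calculations one can get the following observation'' and left at that --- so there is nothing to compare against; your argument is exactly the kind of routine verification the authors are gesturing at, and both halves are sound (the discrete-difference computation for the semibipartite bound is accurate, and the $\mathcal{G}^2_6$-colorable bound is indeed immediate from the definition of $g_2(n)$ as the maximum over all $n$-vertex blowups).
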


\begin{thm}[$2$-stability]\label{thm-2-stable}
For every $\delta > 0$ there exists $\epsilon >0$ and $n_0$ such that the following holds for all $n \ge n_0$.
Every $\mathcal{M}$-free $3$-graph on $n$ vertices with at least $2n^3 /27 - \epsilon n^3$ edges can be transformed to a $3$-graph
that is either semibipartite or $\mathcal{G}^2_6$-colorable by removing at most $\delta n$ vertices.
In other words, $\xi(\mathcal{M}) = 2$.
\end{thm}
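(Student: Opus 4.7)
The plan is to combine a density-based cleaning step with a dichotomy driven by the shadow graph $\partial \mathcal{H}$. By Theorem~\ref{thm-Turan-num-M}, the hypothesis places $|\mathcal{H}|$ within $\epsilon n^3$ of the extremal value $2n^3/27$. Iteratively deleting any vertex of degree below $(2/9 - \sqrt{\epsilon})n^2$ yields an induced subgraph $\mathcal{H}'$ on $n' = n - O(\sqrt{\epsilon}\, n)$ vertices with near-extremal edge count and every vertex of degree at least $(2/9 - O(\sqrt{\epsilon}))n'^2$. The remaining task is to remove a further $o(\delta n)$ vertices so that what is left is semibipartite or $\mathcal{G}^2_6$-colourable.

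The three families comprising $\mathcal{M}$ translate into local structural restrictions. $M_1$-freeness limits the number of triples inside any $4$-set spanning a $K_4$ in some link. $M_2$-freeness forces $\mathcal{H}'[T]$ to be a star whenever $T$ is a $7$-set with $\binom{T}{2} \subseteq \partial \mathcal{H}'$: otherwise the $3$-graph consisting of $\mathcal{H}'[T]$ together with minimal edges covering the remaining pairs of $\binom{T}{2}$ from outside $T$ would be a copy of some $F \in M_2$. Similarly, $M_3$-freeness forces $\mathcal{H}'[T]$ for any such $6$-set $T$ to embed into a blowup of a star or of $\mathcal{G}^2_6$.

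I would then split into two cases based on the clique number $\omega(\partial \mathcal{H}')$. If $\omega(\partial \mathcal{H}') \ge 7$, fix a maximum shadow-clique $K$. The $M_2$-constraint applied to overlapping $7$-subsets of $K$ forces $\mathcal{H}'[K]$ to be a star through a common centre $v$. The high minimum degree lets $K$ be extended to cover all but $o(n)$ vertices, and the resulting bipartition $V(\mathcal{H}') = A \sqcup B$ with $v \in A$ makes $\mathcal{H}'$ semibipartite after deleting the few vertices violating it. If $\omega(\partial \mathcal{H}') \le 6$, the minimum-degree hypothesis yields many $6$-cliques in $\partial \mathcal{H}'$, each carrying by $M_3$-freeness a compatible local $6$-colouring. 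A gluing argument then promotes these local colourings into a global $6$-partition of $V(\mathcal{H}')$, and deleting the few vertices not fitting this colouring leaves a $\mathcal{G}^2_6$-colourable $3$-graph.

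The principal technical obstacle is Case B: showing that per-$6$-clique colourings can be stitched into a coherent global partition. This requires proving that two overlapping local $\mathcal{G}^2_6$-colourings must agree up to the automorphism group of $\mathcal{G}^2_6$ (exploiting the rigidity imposed by $M_1$ and $M_2$ to rule out rotated colourings), and then using density to tile $V(\mathcal{H}')$ while simultaneously ruling out hybrid structures whose shadow sits right at the borderline between the two cases. Once Cases A and B are resolved, $\xi(\mathcal{M}) = 2$ follows at once, since $\mathcal{G}^1_n$ and $\mathcal{G}^2_n$ are at edit-distance $\Omega(n^3)$ as already noted, so neither alone can serve as a single stability target.
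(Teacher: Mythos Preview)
Your overall dichotomy is the right intuition, but the proposal is a strategy outline with two genuine gaps, and the paper's route is substantially different.

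\textbf{Where the paper diverges.} The paper does not argue directly on $\omega(\partial\mathcal{H}')$. Instead it runs a \emph{symmetrization-with-cleaning} algorithm (Algorithm~2): alternately merge a non-adjacent non-equivalent pair of equivalence classes and then prune low-degree vertices, until the surviving $3$-graph $\mathcal{H}_t$ on $W$ is a blowup of a small $2$-covered template $\mathcal{T}$. The dichotomy then happens on $\mathcal{T}$: either $\mathcal{T}$ is a star or $\mathcal{T}\subset\mathcal{G}^2_6$ (Lemma~\ref{lemma-Ht-semi-or-G2-color}). The heart of the proof is a \emph{backward induction}: if $\widetilde{\mathcal{H}}_{i+1}=\mathcal{H}_{i+1}[W]$ is semibipartite (resp.\ $\mathcal{G}^2_6$-colourable), then so is $\widetilde{\mathcal{H}}_i$. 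Because consecutive steps differ only on a single equivalence class $C_v$, this reduces each inductive step to placing one block of clones, which is exactly what sidesteps your ``gluing'' obstacle. In the semibipartite branch the paper does not extend a clique at all; it invokes the F\"uredi--Pikhurko--Simonovits stability theorem for $\mathbb{F}_{3,2}$ (Theorem~\ref{THM:FPS-independent-neighbor}) after checking that a putative $\mathbb{F}_{3,2}$ would produce a member of $M_2$.

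\textbf{Gaps in your route.} In Case~A, ``$\mathcal{H}'[K]$ is a star'' does not hand you a bipartition: in $\mathcal{G}^1_n$, every maximum shadow-clique is $B\cup\{a\}$ for some $a\in A$, so the centre varies with $K$, and ``extending $K$'' is undefined since $K$ is already maximum in $\partial\mathcal{H}'$. You would need an additional argument (e.g.\ $\mathbb{F}_{3,2}$-freeness plus FPS, as in the paper) to pin down $A$ versus $B$ globally. In Case~B you correctly flag the gluing of local $\mathcal{G}^2_6$-colourings as the crux, but offer no mechanism; this is where the paper spends most of its effort (Claims~\ref{claim-P-size-2}--\ref{CLAIM:bad-edges-are-0}), and it succeeds precisely because symmetrization lets it argue about one equivalence class at a time rather than reconciling colourings on many overlapping $6$-cliques simultaneously. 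As written, your proposal does not close either gap.
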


Note that Theorem \ref{thm-2-stable} is stronger than the requirement in the definition of $2$-stability
since removing at most $\delta n$ vertices implies that
the number of edges removed is at most $\delta n^3$ but not vice versa.

Let $\mathcal{H}$ be an $r$-graph on $n$ vertices.
The edge density of $\mathcal{H}$ is $d(\mathcal{H}):= |\mathcal{H}|/\binom{n}{r}$
and the shadow density of $\mathcal{H}$ is $d(\partial\mathcal{H}):= |\partial\mathcal{H}|/\binom{n}{r-1}$.
The feasible region $\Omega(\mathcal{F})$ of $\mathcal{F}$
is the set of points $(x,y)\in [0,1]^2$ such that there exists a sequence of $\mathcal{F}$-free $r$-graphs
$\left( \mathcal{H}_{k}\right)_{k=1}^{\infty}$ with $\lim_{k \to \infty}v(\mathcal{H}_{k}) = \infty$,
$\lim_{k \to \infty}d(\partial\mathcal{H}_{k}) = x$ and $\lim_{k \to \infty}d(\mathcal{H}_{k}) = y$.
We introduced this notion recently in \cite{LM19A}  to understand the extremal properties of
$\mathcal{F}$-free hypergraphs  beyond just the determination of $\pi(\mathcal{F})$ (it unifies and generalizes many classical problems).
In particular, we proved that $\Omega(F)$ is completely determined by a left-continuous almost everywhere differentiable function
$g(\mathcal{F}) : {\rm proj}\Omega(\mathcal{F}) \to [0,1]$,
where
\[
{\rm proj}\Omega(\mathcal{F}) = \left\{ x : \text{$\exists y \in [0,1]$ such that $(x,y) \in \Omega(\mathcal{F})$} \right\},
\]
and
\[
g({\mathcal{F}},x) = \max\left\{y: (x,y) \in \Omega(\mathcal{F}) \right\}, \text{ for all } x \in {\rm proj}\Omega(\mathcal{F}).
\]
Theorem \ref{thm-Turan-num-M} together with Theorem \ref{thm-2-stable} yield the following result.

\begin{thm}\label{thm-2-global-max}
The set ${\rm proj}\Omega(\mathcal{M}) = [0,1]$, and $g({\mathcal{M}},x) \le 4/9$ for all $x \in [0,1]$.
Moreover, $g({\mathcal{M}},x) = 4/9$ iff $x \in \{5/6,8/9\}$.
\end{thm}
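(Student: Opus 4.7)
The plan is to combine Theorems \ref{thm-Turan-num-M} and \ref{thm-2-stable} with direct computations of the shadow densities of the two extremal constructions $\mathcal{G}_n^1$ and $\mathcal{G}_n^2$. The upper bound $g(\mathcal{M},x)\le 4/9$ is immediate from Theorem \ref{thm-Turan-num-M}: dividing by $\binom{n}{3}$ and letting $n \to \infty$ gives $d(\mathcal{H}) \le 4/9$ for every $\mathcal{M}$-free $\mathcal{H}$. For the projection, to realize an arbitrary $x \in [0,1]$ I would take the complete star on $\lfloor\sqrt{x}\,n\rfloor + 1$ vertices, padded with isolated vertices to reach $n$ vertices; this embeds into $\mathcal{G}_n^1$ (center in $A$, leaves in $B$), is therefore $\mathcal{M}$-free, and its shadow consists of exactly $\binom{\lfloor\sqrt{x}\,n\rfloor + 1}{2}$ pairs, so its shadow density tends to $x$.

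For the sufficiency direction in the equality, a short computation delivers the two critical shadow densities: in $\mathcal{G}_n^1$ every pair in $B$ and every pair in $A \times B$ is covered, so $d(\partial\mathcal{G}_n^1) \to (2/3)^2 + 2(1/3)(2/3) = 8/9$; in $\mathcal{G}_n^2$ one checks directly against the complement triples $\{123,126,345,456\}$ that all $\binom{6}{2}=15$ pairs of $[6]$ appear in some edge of $\mathcal{G}_6^2$, so every cross-part pair of the balanced blowup lies in the shadow, giving $d(\partial\mathcal{G}_n^2) \to 2\cdot 15\cdot(1/6)^2 = 5/6$. For necessity, take a witnessing sequence $\mathcal{H}_k$ on $n_k \to \infty$ vertices with $d(\mathcal{H}_k) \to 4/9$ and $d(\partial\mathcal{H}_k) \to x$. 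Since $\pi(\mathcal{M}) = 4/9$ by Theorem \ref{thm-Turan-num-M}, the hypothesis of Theorem \ref{thm-2-stable} eventually holds, so after passing to a subsequence I may delete $o(n_k)$ vertices from each $\mathcal{H}_k$ to obtain $\mathcal{H}'_k$ that is always semibipartite or always $\mathcal{G}_6^2$-colorable, while both densities are preserved in the limit.

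It then remains to show that a semibipartite $\mathcal{H}'$ with $d(\mathcal{H}') \to 4/9$ satisfies $d(\partial\mathcal{H}') \to 8/9$, and a $\mathcal{G}_6^2$-colorable $\mathcal{H}'$ with $d(\mathcal{H}') \to 4/9$ satisfies $d(\partial\mathcal{H}') \to 5/6$. In the semibipartite case, write $|A| = \alpha n'$; the bound $|\mathcal{H}'| \le |A|\binom{|B|}{2}$ together with the strict concavity of $3\alpha(1-\alpha)^2$ at its unique maximizer $\alpha = 1/3$ forces $\alpha \to 1/3$, and the double counts $|\partial\mathcal{H}'\cap\binom{B}{2}| \ge |\mathcal{H}'|/|A|$ and $|\partial\mathcal{H}'\cap(A\times B)| \ge 2|\mathcal{H}'|/|B|$ asymptotically saturate $\binom{|B|}{2}$ and $|A||B|$, the only pairs available for the shadow, giving $d(\partial\mathcal{H}') \to 8/9$. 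The $\mathcal{G}_6^2$-colorable case is parallel: the Lagrangian $f(\tilde x) = 6\sum_{ijk \in \mathcal{G}_6^2} \tilde x_i \tilde x_j \tilde x_k$ on the simplex attains its unique maximum $4/9$ at the balanced point $\tilde x_i = 1/6$ (by a brief symmetry plus second-order argument), forcing the six part sizes to equalize; since every one of the $15$ pairs of $[6]$ is covered by $\mathcal{G}_6^2$, an analogous double count puts every cross-part pair into the shadow and yields $d(\partial\mathcal{H}') \to 5/6$. I expect the main technical point to be confirming the uniqueness of the balanced maximizer for the $\mathcal{G}_6^2$ Lagrangian, and ensuring that the $o(1)$ perturbations incurred in the stability reduction do not contaminate the exact shadow-density limits.
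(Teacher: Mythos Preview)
Your proposal is correct and follows essentially the same strategy as the paper: the projection via stars, the upper bound via Theorem~\ref{thm-Turan-num-M}, the two attained values via the explicit constructions, and the uniqueness via stability plus a Lagrangian argument forcing the part sizes. The paper packages the uniqueness step as a separate lemma (Lemma~\ref{lemma-shadow-semibi-G2-color}) that bounds $|\partial\mathcal{H}|$ directly, and to get the \emph{upper} bound on $|\partial\mathcal{H}|$ it does additional work analyzing the links $L_{\mathcal{H}}(v)$ for the removed vertices $v\in Z$ (showing, e.g., $L_{\mathcal{H}}(v)[L]=\emptyset$ in the semibipartite case via $M_2$-freeness). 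Your route sidesteps this entirely by observing that deleting $o(n)$ vertices changes the shadow by at most $o(n^2)$ pairs, so $d(\partial\mathcal{H}_k)$ and $d(\partial\mathcal{H}'_k)$ have the same limit; this is simpler and fully sufficient for the asymptotic statement of Theorem~\ref{thm-2-global-max}, though it would not yield the explicit $\epsilon^{1/4}$-type error terms the paper records. The uniqueness of the balanced maximizer for the $\mathcal{G}^2_6$ Lagrangian that you flag is indeed the only nontrivial ingredient; the paper proves it in Lemma~\ref{lemma-lagrang-G26} (and quantitatively in Claim~\ref{claim-P-size-2}), so you may cite that.
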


In words, Theorem \ref{thm-2-global-max} says that $\mathcal{M}$-free $3$-graphs can have any possible shadow density
but the edge density is maximized for exactly two values of the shadow densities.

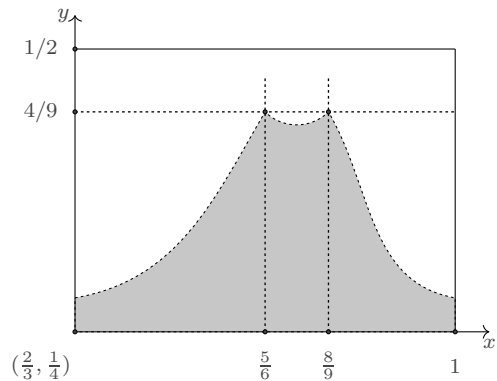
\begin{figure}[htbp]
\centering
\begin{tikzpicture}[xscale=15,yscale=15]
\draw [line width=0.5pt,dash pattern=on 1pt off 1.2pt,domain=2/3:1] plot(\x,4/9);
\draw [line width=0.5pt,dash pattern=on 1pt off 1.2pt] (5/6,1/4)--(5/6,4/9+0.03);
\draw [line width=0.5pt,dash pattern=on 1pt off 1.2pt] (8/9,1/4)--(8/9,4/9+0.03);
\draw [->] (2/3,1/4)--(1+0.03,1/4);
\draw [->] (2/3,1/4)--(2/3,1/2+0.03);
\draw (1,1/4)--(1,1/2);
\draw (2/3,1/2)--(1,1/2);
\draw[fill=sqsqsq,fill opacity=0.25, dash pattern=on 1pt off 1.2pt]
(2/3,1/4+0.03) to [out = 10, in = 240] (5/6,4/9) to [out = 315, in = 225] (8/9,4/9) to [out = 300, in = 170] (1,1/4+0.03)
-- (1,1/4) -- (2/3,1/4) -- (2/3,1/4+ 0.03);
\begin{scriptsize}
\draw [fill=uuuuuu] (5/6,4/9) circle (0.05pt);
\draw [fill=uuuuuu] (8/9,4/9) circle (0.05pt);
\draw [fill=uuuuuu] (2/3,1/4) circle (0.05pt);
\draw[color=uuuuuu] (2/3-0.03,1/4-0.03) node {$(\frac{2}{3}, \frac{1}{4})$};
\draw [fill=uuuuuu] (2/3,1/2) circle (0.05pt);
\draw[color=uuuuuu] (2/3-0.03,1/2) node {$1/2$};
\draw [fill=uuuuuu] (2/3,4/9) circle (0.05pt);
\draw[color=uuuuuu] (2/3-0.03,4/9) node {$4/9$};
\draw [fill=uuuuuu] (5/6,1/4) circle (0.05pt);
\draw[color=uuuuuu] (5/6,1/4-0.03) node {$\frac{5}{6}$};
\draw [fill=uuuuuu] (8/9,1/4) circle (0.05pt);
\draw[color=uuuuuu] (8/9,1/4-0.03) node {$\frac{8}{9}$};
\draw [fill=uuuuuu] (1,1/4) circle (0.05pt);
\draw[color=uuuuuu] (1,1/4-0.03) node {$1$};
\draw[color=uuuuuu] (1+0.03,1/4-0.01) node {$x$};
\draw[color=uuuuuu] (2/3-0.01,1/2+0.03) node {$y$};
\end{scriptsize}
\end{tikzpicture}
\caption{$g(\mathcal{M})$ has exactly two global maxima by Theorem \ref{thm-2-global-max}.}
\end{figure}

This paper is organized as follows.
In Section~\ref{SEC:proof-K43-is-infinity-stable} we prove Proposition~\ref{prop-stability-number-K43}.
In Section~\ref{SEC:prelim} we present some preliminary definitions and lemmas.
In Section~\ref{SEC:proof-Turan-number-M} we prove Theorem \ref{thm-Turan-num-M},
and in Section~\ref{SEC:proof-stability-M} we prove Theorem \ref{thm-2-stable}.
In Section~\ref{SEC:feasible-region-2-global-max} we prove Theorem~\ref{thm-2-global-max}.
We include some open problems in Section~\ref{SEC:open-problem}.

\section{Proof of Proposition \ref{prop-stability-number-K43}}\label{SEC:proof-K43-is-infinity-stable}
In this section we prove Proposition \ref{prop-stability-number-K43}.

In \cite{KO82}, Kostochka constructed exponentially many
$K_{4}^{3}$-free $3$-graphs on $n$ vertices whose edge densities achieve $5/9$;
one of his constructions is as follows (also see Brown \cite{BR83}).

Suppose that $n$ is a multiple of $3$ and $[n]=V_1\cup V_2\cup V_3$ is a balanced partition of $[n]$.
For $i \in [3]$ let $V_{i} = V_{1,i} \cup V_{2,i}$, where $|V_{1,i}| = m$ and $|V_{2,i}| = n/3-m$.
Let $\mathcal{G}(n,m)$ be 3-graph with vertex set $[n]$ whose edge set is the union of the following sets:
\begin{enumerate}
    \item $\{ abc: a,b,c \in V_{i,j}, i,j\in[3] \}$.
    \item $\{ abc: a,b \in V_{1,j}, c\in V_{2,j}, j\in [3]\}$.
    \item $\{ abc: a,b \in V_{1,j}, c\in V_{j+1}, j\in [3]\}$.
    \item $\{ abc: a,b \in V_{2,j}, c\in V_{1,j+1}, j\in [3] \}$.
    \item $\{ abc: a,b \in V_{2,j}, c\in V_{j-1}, j\in [3]\}$.
    \item $\{ abc: a\in V_{1,j}, b\in V_{2,j}, c\in V_{j+1}, j\in [3]\}$.
\end{enumerate}
The complement of $\mathcal{G}(n,m)$ is $K_{4}^{3}$-free
and $$|\mathcal{G}(n,m)| = \frac{n(n-3)(2n-3)}{27}$$ for all $0 \le m \le n/3$ (see~\cite{KO82} for details).

The {\em edit distance} {\rm ed}$(H_1, H_2)$ between two $r$-graphs $H_1$ and $H_2$ on the same vertex set is the minimum number of edges that need to be deleted and added to $H_1$ to obtain an isomorphic copy of $H_2$. The idea in the proof of Proposition \ref{prop-stability-number-K43} is
to choose a family $\mathcal{G}$ of $K_{4}^{3}$-free $3$-graphs from Kostochka's constructions
described above and show that every pair $\{\mathcal{G}^1,\mathcal{G}^2\} \subset \mathcal{G}$ has large edit distance. We begin with the following useful lemma. Write $K_4^{3-}$ for the unique 3-graph with four vertices and three edges.

\begin{lemma} \label{lemk43-} Let $\varphi(n,m)$ denote the number of induced $K_{4}^{3-}$ in $\mathcal{G}(n,m)$. Then
\begin{equation} \label{eqnk43-}\varphi(n,m) = \frac{1}{6}m^2(n-3m)(n-3m-3).\end{equation}
\end{lemma}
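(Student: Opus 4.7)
The plan is to count each induced $K_{4}^{3-}$ once through its unique non-edge. Because the complement $\overline{\mathcal{G}(n,m)}$ is $K_{4}^{3}$-free, every $4$-subset of $[n]$ contains at most one triple of $\overline{\mathcal{G}(n,m)}$, so a $4$-set induces a $K_{4}^{3-}$ if and only if it contains exactly one such triple. Hence
\[
\varphi(n,m) = \sum_{T \in \overline{\mathcal{G}(n,m)}} \bigl|\{v \in [n]\setminus T : (T\setminus\{x\})\cup\{v\} \in \mathcal{G}(n,m) \text{ for every } x\in T\}\bigr|.
\]
I would compute this sum by classifying $T$ according to how its three vertices distribute among the six blocks $V_{i,j}$, $i\in[2]$, $j\in[3]$.

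First I would enumerate the non-edge distributions by running through the six edge families (1)--(6): up to the $\mathbb{Z}_{3}$-cyclic symmetry $j\mapsto j+1$, the ``$2{+}1$'' non-edges are exactly
\[
(V_{1,j})^{2}V_{1,j-1},\ \ (V_{1,j})^{2}V_{2,j-1},\ \ (V_{2,j})^{2}V_{1,j},\ \ (V_{2,j})^{2}V_{2,j+1},
\]
and the ``$1{+}1{+}1$'' non-edges are the two fully-$V_{1}$ and fully-$V_{2}$ patterns together with those patterns of the form $V_{1,a}V_{1,b}V_{2,c}$ or $V_{1,a}V_{2,b}V_{2,c}$ that do not match the type~(6) rule. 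Then for each non-edge type $T$ I would read off, for each of the three triples $(T\setminus\{x\})\cup\{v\}$, which blocks $v$ must lie in for that triple to be an edge; the valid extensions form the intersection of these three block-unions.

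A direct check shows that this intersection is empty for every non-edge type except one: when $T$ has two vertices $c,c'$ in $V_{2,j}$ and one vertex $a$ in $V_{1,j}$, the triples $\{a,c,v\}$ and $\{a,c',v\}$ force $v\in V_{1,j}\cup V_{j+1}$ (via edge types (2) and (6)), while the triple $\{c,c',v\}$ forces $v\in V_{2,j}\cup V_{1,j+1}\cup V_{j-1}$ (via types (1), (4), (5)); the intersection is exactly $V_{1,j+1}$, giving $m$ valid extensions. Counting $3\binom{k}{2}m$ triples of this type, with $k:=n/3-m$, and multiplying by $m$ yields
\[
\varphi(n,m) \;=\; 3m^{2}\binom{k}{2} \;=\; \frac{m^{2}(n-3m)(n-3m-3)}{6},
\]
which is the desired formula.

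The main obstacle is the systematic verification that every other non-edge type admits no valid extension. The $\mathbb{Z}_{3}$-cyclic symmetry cuts the work by a factor of three, but the asymmetry between types (3) and (5) in the definition of $\mathcal{G}(n,m)$ is genuine---pairs in $V_{1,j}$ extend forward to $V_{j+1}$ while pairs in $V_{2,j}$ extend backward to $V_{j-1}$---so the case analysis does not collapse under a $V_{1}\leftrightarrow V_{2}$ swap, and this asymmetry is exactly what singles out $(V_{2,j})^{2}V_{1,j}$ as the unique surviving configuration.
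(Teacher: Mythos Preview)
Your approach and the paper's coincide: both identify that a $4$-set $S$ induces $K_4^{3-}$ in $\mathcal{G}(n,m)$ precisely when $|S\cap V_{1,j}|=|S\cap V_{1,j+1}|=1$ and $|S\cap V_{2,j}|=2$ for some $j\in[3]$, and both then count $3m^2\binom{n/3-m}{2}$. The paper asserts this characterisation in one line; you reach it by fixing the unique non-edge first and carrying out the key case explicitly, which is a sensible way to organise the case split.

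One error to fix: your opening claim that ``every $4$-subset of $[n]$ contains at most one triple of $\overline{\mathcal{G}(n,m)}$'' does not follow from $K_4^3$-freeness of the complement (that only gives at most three such triples), and it is in fact false --- take $a,b\in V_{1,1}$ and $c,d\in V_{1,2}$; then both $\{a,c,d\}$ and $\{b,c,d\}$ lie in $\overline{\mathcal{G}(n,m)}$. Fortunately the claim is unnecessary: your displayed identity for $\varphi(n,m)$ holds simply because every induced $K_4^{3-}$ has exactly one non-edge by definition, so summing over $T\in\overline{\mathcal{G}(n,m)}$ and demanding that the other three triples through $v$ be edges counts each induced $K_4^{3-}$ once, with no possibility of overcounting. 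Delete the faulty sentence and the argument stands.
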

\begin{proof}
It is easy to see that $S \subset [n]$ induces a copy of $K_{4}^{3-}$ in $\mathcal{G}(n,m)$ iff
$|S\cap V_{1,j}| = |S\cap V_{1,j+1}| = 1$ and $|S \cap V_{2,j}|=2$ for some $j\in[3]$.
Therefore,
$$
    \varphi(n,m) = 3m^2\binom{n/3-m}{2}
                  $$
                  and expanding the binomial coefficient yields (\ref{eqnk43-}).\end{proof}

\begin{proof}[Proof of Proposition \ref{prop-stability-number-K43}]
We are going to show that for every integer $t$ there exists $\delta>0$
such that for every $\epsilon>0$ there exists
$\mathcal{G}^{1}_{n},\ldots,\mathcal{G}^{t}_{n}$ for all sufficiently large $n$
such that $\mathcal{G}^{i}_{n}$ is $K_{4}^{3}$-free and $|\mathcal{G}^{i}_{n}|>(5/9-\epsilon)\binom{n}{3}$ for all $i\in[t]$
and ${\rm ed}(\mathcal{G}^{i}_{n},\mathcal{G}^{j}_{n})>\delta n^3$ for all $\{i,j\}\subset [t]$.

Fix $t>0$ and let $0<\delta< \min\{1/t^6,10^{-8}\}$.
Fix $\epsilon>0$ and let $n$ be sufficiently large such that $$\binom{n}{3}-|\mathcal{G}(n,m)|>(5/9-\epsilon)\binom{n}{3}.$$
Let $m_{k} = \lceil 100\delta^{1/2}kn\rceil $ and $\mathcal{G}^{k}_{n} = \binom{[n]}{3}\setminus \mathcal{G}(n,m_k)$ for $k= 1,\ldots, t \le \lceil \delta^{-1/6} \rceil$.
Let $\{i,j\} \subset [t]$ and
assume $i<j$.
Notice that removing or adding one edge in a $n$-vertex $3$-graph $\mathcal{H}$
can  change the number of induced $K_{4}^{3-}$ by at most $n-3$.
Therefore, by Lemma~\ref{lemk43-} (and here we omit ceilings for ease of notation),
\begin{align}
    {\rm ed}(\mathcal{G}^{i}_{n},\mathcal{G}^{j}_{n})
    &= {\rm ed}(\mathcal{G}(n,m_i),\mathcal{G}(n,m_j))\notag\\
    &>\frac{\varphi(n,m_j)- \varphi(n,m_i)}{n}\notag\\
    &= \frac{(100\delta^{1/2}jn)^2(n-300\delta^{1/2}jn)(n-300\delta^{1/2}jn-3)}{6n}\notag\\
    &\quad -\frac{(100\delta^{1/2}in)^2(n-300\delta^{1/2}in)(n-300\delta^{1/2}in-3)}{6n}
    \notag\\
    &> 1000\delta n^3 \left(\left(1-\frac{3}{n}\right)(j^2-i^2) -300\delta^{1/2}\left(2-\frac{3}{n}\right) \left(j^3-i^3\right)+90000\delta\left(j^4-i^4\right)\right) \notag\\
    &> \delta n^3,\notag
\end{align}
where in the last inequality we used $\delta< \min\{1/t^6,10^{-8}\}$ and $i < j \le \lceil \delta^{-1/6} \rceil$.
This implies that  $\xi(K_{4}^{3})\ge t$ for all $t>0$. Therefore, $\xi(K_{4}^{3}) = \infty$.
\end{proof}

\section{Preliminaries}\label{SEC:prelim}
For a graph $G$ and two disjoint sets $A,B\subset V(G)$
denote by $G[A,B]$ the induced bipartite subgraph of $G$ with two parts $A$ and $B$.

Let $r \ge 2$ and $\mathcal{H}$ be an $r$-graph.
Recall that for every $v \in V(\mathcal{H})$ the link $L_{\mathcal{H}}(v)$ of $v$ in $\mathcal{H}$ is
\begin{align}
L_{\mathcal{H}}(v) = \left\{A\in \partial\mathcal{H}\colon A\cup \{v\}\in \mathcal{H}\right\}, \notag
\end{align}
the degree of $v$ in $\mathcal{H}$ is $d_{\mathcal{H}}(v) := |L_{\mathcal{H}}(v)|$,
and the minimum degree of $\mathcal{H}$ is $\delta(\mathcal{H}) := \min\{d_{\mathcal{H}}(v): v\in V(\mathcal{H})\}$.
For $S \subset V(\mathcal{H})$, the neighborhood\footnote{Note that this is not a standard definition for the neighborhood.
Some authors define the the neighborhood of an $s$-set $S$ to be its $(r-s)$-uniform link.}
of $S$ in $\mathcal{H}$ is
\[
N_{\mathcal{H}}(S) := \{v \in V(\mathcal{H})\setminus S: \exists E \in \mathcal{H} \text{ such that } \{v\}\cup S \subset E\}.
\]
Two vertices $u,v \in V(\mathcal{H})$ are adjacent in $\mathcal{H}$ if $u\in N_{\mathcal{H}}(v)$.
When it is clear from context we will omit the subscript $\mathcal{H}$ in the notations above.

Let $V(\mathcal{H}) = [n]$.
For $x = (x_1,\ldots,x_n)$ define the weight polynomial of a hypergraph $\mathcal{H}$ as
\[
p_{\mathcal{H}}(x) := \sum_{E\in \mathcal{H}} \prod_{i\in E} x_i.
\]
The standard $n$-simplex is
\[
\Delta^{n} := \left\{x \in \mathbb{R}^{n+1} : \sum_{i=1}^{n+1}x_i = 1 \text{ and } x_i \ge 0 \text{ for all }i \in [n+1] \right\}.
\]
The Lagrangian of $\mathcal{H}$ is
\[
\lambda(\mathcal{H}) := \max \left\{p_{\mathcal{H}}(x): x \in \Delta^{n-1} \right\}.
\]
Note that $\Delta^{n-1}$ is compact in $\mathbb{R}^{n}$ and $p_{\mathcal{H}}(x)$ is continuous,
so $\lambda(\mathcal{H})$ is well-defined.

Recall that in Section 1 we defined the blowup of an $r$-graph $\mathcal{T}$.
The next standard lemma gives a relationship between $\lambda(\mathcal{T})$ and the size of a blowup of $\mathcal{T}$.

\begin{lemma}\label{lemma-blowup-lagrang}
Let $r \ge 2$ and $\mathcal{T}$ and $\mathcal{H}$ be two $r$-graphs.
Suppose that $\mathcal{H}$ is a blowup of $\mathcal{T}$ with $v(\mathcal{H}) = n$.
Then $|\mathcal{H}| \le \lambda(\mathcal{T}) n^{r}$.
\end{lemma}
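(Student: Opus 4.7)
The plan is to unwind the definition of blowup and read off the bound directly from the Lagrangian. Suppose $\mathcal{T}$ has vertex set $[s]$ and $\mathcal{H} = \mathcal{T}(t)$ with $t = (t_1,\ldots,t_s)$ a vector of positive integers summing to $n$. By construction, an edge of $\mathcal{H}$ is obtained by choosing some $E \in \mathcal{T}$ and then choosing one vertex from $V_i$ for each $i \in E$, where $|V_i| = t_i$. Hence
\[
|\mathcal{H}| \;=\; \sum_{E \in \mathcal{T}} \prod_{i \in E} t_i.
\]

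Next I would renormalize by setting $x_i := t_i/n$ for $i \in [s]$. Then $x_i \ge 0$ and $\sum_{i=1}^{s} x_i = 1$, so $x = (x_1,\ldots,x_s) \in \Delta^{s-1}$. Since each edge of $\mathcal{T}$ has exactly $r$ vertices, pulling a factor of $1/n^r$ out of the product above gives
\[
|\mathcal{H}| \;=\; n^{r}\sum_{E \in \mathcal{T}}\prod_{i \in E} x_i \;=\; n^{r}\, p_{\mathcal{T}}(x).
\]
Finally, by the definition of the Lagrangian, $p_{\mathcal{T}}(x) \le \lambda(\mathcal{T})$ for every $x \in \Delta^{s-1}$, so $|\mathcal{H}| \le \lambda(\mathcal{T})\, n^{r}$, as claimed.

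There is no real obstacle here; the argument is a direct calculation once the blowup is written out. The only mild subtlety is making sure the identification $x_i = t_i/n$ really lands in $\Delta^{s-1}$ (which it does by the assumption $\sum t_i = v(\mathcal{H}) = n$), and that the common exponent $r$ in $\prod_{i \in E} t_i = n^r \prod_{i \in E} x_i$ comes from the $r$-uniformity of $\mathcal{T}$. If one wishes to allow some $t_i = 0$ (effectively deleting vertices from $\mathcal{T}$ before blowing up), the same computation still applies verbatim, since $x$ still lies in $\Delta^{s-1}$.
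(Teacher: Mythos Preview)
Your proof is correct and is essentially identical to the paper's: both write $|\mathcal{H}|=\sum_{E\in\mathcal{T}}\prod_{i\in E}t_i$, rescale by setting $x_i=t_i/n$ so that $x\in\Delta^{s-1}$, and then bound $p_{\mathcal{T}}(x)$ by $\lambda(\mathcal{T})$.
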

\begin{proof}
Suppose that $|V(\mathcal{T})| = s$ and $\mathcal{H} = \mathcal{T}(t)$ for some $t = (t_1, \ldots, t_s)$.
Then
\[
|\mathcal{H}| = \sum_{E \in \mathcal{T}} \prod_{i \in E}t_i = n^r \sum_{E \in \mathcal{T}} \prod_{i \in E}\frac{t_i}{n}
\le \lambda(\mathcal{T}) n^r,
\]
where the last inequality follows from the definition of $\lambda(\mathcal{T})$ and $\sum_{i\in[s]}t_i = n$.
\end{proof}

Given another $r$-graph $F$ we say $f : V(F) \to V(\mathcal{H})$ is a homomorphism if
$f(E) \in \mathcal{H}$ for all $E \in F$, i.e., $f$ preserves edges.
We say that $\mathcal{H}$ is $F$-hom-free if there is no homomorphism from $F$ to $\mathcal{H}$.
In other words, $\mathcal{H}$ is $F$-hom-free if and only if all blowups of $\mathcal{H}$ are $F$-free.
For a family $\mathcal{F}$ of $r$-graphs,
$\mathcal{H}$ is $\mathcal{F}$-hom-free if it is $F$-hom-free for all $F\in \mathcal{F}$.

An $r$-graph $F$ is $2$-covered if every $\{u,v\} \subset V(F)$ is contained in some $E \in F$,
and a family $\mathcal{F}$ is $2$-covered if all $F \in \mathcal{F}$ are $2$-covered.
It is easy to see that if $\mathcal{F}$ is $2$-covered,
then $\mathcal{H}$ is $\mathcal{F}$-free if and only if it is $\mathcal{F}$-hom-free.
Although $\mathcal{M}$ is not $2$-covered, we still have a similar result.

\begin{lemma}\label{lemma-M=homM}
A $3$-graph $\mathcal{H}$ is $\mathcal{M}$-free if and only if it is $\mathcal{M}$-hom-free.
\end{lemma}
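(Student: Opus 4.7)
The reverse direction is immediate since any subgraph embedding is in particular a homomorphism. For the forward direction, I propose to take any putative homomorphism $f\colon V(F)\to V(\mathcal{H})$ from some $F\in \mathcal{M}$ and show that the image hypergraph $f(F):=\{f(E):E\in F\}$ is itself a subgraph of $\mathcal{H}$ that belongs to $\mathcal{M}$, thereby producing the desired forbidden configuration.

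The single observation I intend to use throughout is that because every edge of $\mathcal{H}$ is a $3$-element set, $f$ must be injective on every edge of $F$, and hence on every $2$-covered subset of $V(F)$. This instantly settles $M_1$: the graph $K_{5}^{3-}$ is $2$-covered, so $f$ is injective everywhere and $f(F)\cong K_{5}^{3-}\subset \mathcal{H}$. For $F\in M_2\cup M_3$ with core $S$, the core is $2$-covered, so $S':=f(S)$ satisfies $|S'|=|S|$, and every pair in $S'$ is covered by a bona fide $3$-element edge of $f(F)$. Thus $f(F)$ lies in $\mathcal{K}_{|S|}^{3}$ with core $S'$, and in particular has few enough vertices to satisfy the size bound in the definition.

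For $F\in M_2$ I then note that the bijection $f|_S\colon S\to S'$ carries $F[S]$ isomorphically onto a sub-hypergraph of $f(F)[S']$. Since additional edges can only increase the transversal number, $\tau(f(F)[S'])\ge \tau(F[S])\ge 2$, and hence $f(F)\in M_2$.

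The main obstacle is the $M_3$ case, where membership is defined by non-embeddability in $\mathcal{G}_n^1$ and $\mathcal{G}_n^2$ for \emph{every} $n$. My plan is first to note that $F$ embeds in some $\mathcal{G}_n^1$ iff $F$ is semibipartite, and $F$ embeds in some $\mathcal{G}_n^2$ iff $F$ is $\mathcal{G}_6^2$-colorable; this recasts the $M_3$ condition purely as the non-existence of two types of vertex-partitions of $V(F)$. Now if $f(F)$ admitted such a partition of $V(f(F))$, I pull it back through $f$: for each edge $E\in F$, injectivity of $f|_E$ forces the three vertices of $E$ to distribute among the pulled-back parts in exactly the same pattern that $f(E)$ distributes in $V(f(F))$. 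This yields a partition of $V(F)$ of the same type, contradicting $F\in M_3$; therefore $f(F)\in M_3$. Combining the three cases, $\mathcal{H}$ always contains a subgraph copy of some member of $\mathcal{M}$, which completes the forward direction.
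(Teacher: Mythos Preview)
Your proposal is correct and follows essentially the same approach as the paper: both argue that for a homomorphism $f\colon V(F)\to V(\mathcal{H})$ with $F\in\mathcal{M}$, the image $f(F)$ is itself a member of $\mathcal{M}$ contained in $\mathcal{H}$. The paper's proof is much terser---after noting injectivity on the core it simply asserts $f(F)\in\mathcal{K}^3_{|S|}\cap\mathcal{M}$---while you explicitly verify the $M_2$ and $M_3$ membership of $f(F)$ via the transversal-monotonicity and partition-pullback arguments, which are exactly the details a reader would need to supply.
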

\begin{proof}
The backward implication is clear. Now suppose conversely that $\mathcal{H}$ fails to be
$\mathcal{M}$-hom-free, i.e., that there is a homomorphism $f\colon V(F)\to V(\mathcal{H})$
for some $F\in\mathcal{M}$.
If $F\cong K_{5}^{3-}$, then $f$ is injective due to the fact that $K_{5}^{3-}$ is $2$-covered.
However, this implies that $K_{5}^{3-}\subset \mathcal{H}$, a contradiction.
Therefore, $F\in M_{2}\cup M_3$.
Clearly the restriction of $f$ to the core $S$
of $F$ is injective. So $f(F)\in\mathcal{K}^3_{|S|}\cap \mathcal{M}$ and
in view of $f(F)\subset \mathcal{H}$ it follows that $\mathcal{H}$ fails to be $\mathcal{M}$-free.
\end{proof}

\section{Tur\'{a}n number of $\mathcal{M}$}\label{SEC:proof-Turan-number-M}
In this section, we will prove Theorem \ref{thm-Turan-num-M}.
The first subsection contains some technical lemmas and calculations needed in the proof.

\subsection{Lagrangian of some $3$-graphs}
\begin{lemma}\label{lemma-langran-K43}
Suppose that $\mathcal{T}$ is a $3$-graph with at most four vertices.
Then $\lambda(\mathcal{T}) \le 1/16$.
\end{lemma}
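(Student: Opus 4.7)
The plan is to first reduce to the case $\mathcal{T}=K_{4}^{3}$ by monotonicity of $\lambda$ under edge addition, and then to compute $\lambda(K_{4}^{3})$ directly.

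For the reduction, observe that on the simplex every monomial $\prod_{i\in E}x_{i}$ is nonnegative, so if $\mathcal{T}_{1}\subseteq \mathcal{T}_{2}$ then $p_{\mathcal{T}_{1}}(x)\le p_{\mathcal{T}_{2}}(x)$ pointwise, hence $\lambda(\mathcal{T}_{1})\le \lambda(\mathcal{T}_{2})$. Adjoining an isolated vertex does not change the Lagrangian: the weight polynomial is unaffected, and in any maximizer one may set the new coordinate to $0$ and redistribute its weight without decreasing $p$. Hence any $3$-graph on at most four vertices may be viewed, after padding with isolated vertices, as a subgraph of $K_{4}^{3}$ with the same Lagrangian, which gives $\lambda(\mathcal{T})\le \lambda(K_{4}^{3})$.

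For the evaluation, note that $p_{K_{4}^{3}}(x)=x_{1}x_{2}x_{3}+x_{1}x_{2}x_{4}+x_{1}x_{3}x_{4}+x_{2}x_{3}x_{4}$ is the third elementary symmetric polynomial $e_{3}(x_{1},x_{2},x_{3},x_{4})$. Maclaurin's inequality gives
\[
\left(\frac{e_{3}(x)}{\binom{4}{3}}\right)^{1/3}\le \frac{e_{1}(x)}{4}=\frac{1}{4}
\]
on $\Delta^{3}$, hence $p_{K_{4}^{3}}(x)\le 4/64=1/16$, with equality at $x_{1}=\cdots=x_{4}=1/4$. A self-contained alternative is to use Lagrange multipliers: at any interior critical point one has $\partial p/\partial x_{i}=c$ for all $i$, and by the full symmetry of $K_{4}^{3}$ one checks that the unique such point is the uniform distribution, yielding $p=1/16$; on the boundary of $\Delta^{3}$ some coordinate vanishes and $p_{K_{4}^{3}}$ reduces to a single monomial $x_{i}x_{j}x_{k}\le (1/3)^{3}=1/27<1/16$ by AM-GM.

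I do not expect any substantial obstacle. The only slightly subtle point is confirming that padding by isolated vertices does not alter the Lagrangian, but this is immediate from the fact that such vertices contribute no monomials to the weight polynomial, and indeed the same convexity/monotonicity argument that underlies Lemma~\ref{lemma-blowup-lagrang} applies here.
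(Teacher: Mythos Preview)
Your proof is correct and follows essentially the same approach as the paper: reduce to $K_{4}^{3}$ by monotonicity, then bound $e_{3}(x_{1},\dots,x_{4})$ on the simplex by $4(1/4)^{3}=1/16$. The paper simply asserts the reduction (``without loss of generality'') and the inequality (``it is easy to see''), whereas you spell out both via the isolated-vertex argument and Maclaurin's inequality; your treatment is more detailed but not a different route.
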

\begin{proof}
Without loss of generality we may assume that $v(\mathcal{T}) = 4$ and $|\mathcal{T}| = 4$,
i.e., $\mathcal{T} \cong K_{4}^{3}$.
It is easy to see that
\[
p_{K_{4}^{3}}(x) = x_1x_2x_3+x_1x_2x_4+x_1x_3x_4+x_2x_3x_4 \le 4(1/4)^3 = 1/16.
\]
Therefore, $\lambda(\mathcal{T}) \le 1/16$.
\end{proof}

\begin{lemma}\label{lemma-lagrang-G26}
$\lambda(\mathcal{G}^2_{6}) \le 2/27$.
\end{lemma}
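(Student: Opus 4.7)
The plan is to reduce the maximization of $p_{\mathcal{G}^2_6}(x)$ over the simplex $\Delta^5$ to a one-variable optimization through a sequence of AM--GM steps suggested by the symmetries of $\mathcal{G}^2_6$. Observe that the automorphism of $\mathcal{G}^2_6$ that swaps the pair $\{1,2\}\leftrightarrow\{4,5\}$ and $3\leftrightarrow 6$ preserves the set of complementary non-edges $\{123,126,345,456\}$, so it is natural to introduce $\alpha := x_1+x_2$ and $\beta := x_4+x_5$ and to treat $x_3,x_6$ as distinguished coordinates.

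First I would partition the $16$ edges of $\mathcal{G}^2_6$ according to how many of the vertices $\{3,6\}$ they contain. A short bookkeeping check (all four triples containing both $3$ and $6$ are edges; the $4{+}4$ triples containing exactly one of them, together with the $4$ triples contained in $\{1,2,4,5\}$, are also edges) yields the identity
\[
p_{\mathcal{G}^2_6}(x) \;=\; x_3 x_6\,(\alpha+\beta) \;+\; (x_3+x_6)\,\alpha\beta \;+\; x_1 x_2\,\beta \;+\; x_4 x_5\,\alpha.
\]
Next I apply AM--GM four times, namely $x_1 x_2 \le \alpha^2/4$, $x_4 x_5 \le \beta^2/4$, $x_3 x_6 \le (x_3+x_6)^2/4$, and $\alpha\beta \le (\alpha+\beta)^2/4$. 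Introducing $\sigma := x_3+x_6$ and $\rho := \alpha+\beta = 1-\sigma$, the first two inequalities combine neatly: $x_1 x_2\,\beta + x_4 x_5\,\alpha \le \alpha\beta(\alpha+\beta)/4 = \alpha\beta\,\rho/4$. Substituting and then applying the remaining two AM--GM bounds, the right-hand side collapses to
\[
p_{\mathcal{G}^2_6}(x) \;\le\; \frac{\sigma^2 \rho}{4} \;+\; \frac{\rho^{2}(1+3\sigma)}{16} \;=\; \frac{(1-\sigma)(1+\sigma)^{2}}{16},
\]
using the identity $4\sigma^{2} + (1-\sigma)(1+3\sigma) = (1+\sigma)^{2}$.

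Finally, an elementary one-variable calculation shows that the cubic $(1-\sigma)(1+\sigma)^{2}$ on $[0,1]$ has a unique interior critical point at $\sigma = 1/3$ where it takes the value $32/27$; the boundary values $1$ and $0$ are smaller. Dividing by $16$ gives $\lambda(\mathcal{G}^2_6) \le 2/27$, as required. Tracking the equality cases of all four AM--GM steps forces $x_1=x_2$, $x_4=x_5$, $x_3=x_6$, $\alpha=\beta$, and $\sigma=1/3$, hence $x_1=\cdots=x_6=1/6$, which is consistent with $\mathcal{G}^2_n$ being a balanced blowup of $\mathcal{G}^2_6$.

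The only subtle point in the plan is the algebraic collapse: a priori the AM--GM bound depends on the four quantities $\alpha,\beta,x_3,x_6$, but because the coefficients $\sigma+\rho/4 = (1+3\sigma)/4$ and $\rho=1-\sigma$ conspire via the identity above, everything reduces to a single cubic in $\sigma$ with a clean closed-form maximum. Organizing the AM--GM applications in the correct order so this factorization surfaces is the main thing to get right; no case analysis over supports or Lagrange-multiplier bookkeeping is needed.
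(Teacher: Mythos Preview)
Your proof is correct and follows essentially the same route as the paper: the identical factorization of $p_{\mathcal{G}^2_6}$ in terms of $x_1+x_2$, $x_4+x_5$, $x_3+x_6$, followed by the same sequence of AM--GM reductions to a one-variable expression. The only cosmetic difference is in the last step, where the paper applies one further AM--GM to the product $(a+d)(a+d)(2d)$ instead of your direct calculus maximization of $(1-\sigma)(1+\sigma)^2$.
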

\begin{proof}
Notice that
\begin{align}
p_{\mathcal{G}^2_{6}}(x_1,\ldots,x_{6})
& = x_3x_6(x_1+x_2+x_4+x_5) \notag\\
& \quad + (x_1+x_2)(x_3+x_6)(x_4+x_5) + x_1x_2(x_4+x_5) + x_4x_5(x_1+x_2). \notag
\end{align}
Set $a=(x_3+x_6)/2$, $b = (x_1+x_2)/2$, $c=(x_4+x_5)/2$, $d=(b+c)/2$.
It follows from the AM-GM inequality that
\begin{align}
p_{\mathcal{G}^2_{6}}(x_1,\ldots,x_{6})
 \le 2a^2(b+c)+8abc+2bc(b+c)
& \le 4a^2d + 8ad^2 + 4d^3 \notag\\
& = 2\left((a+d)\cdot (a+d)\cdot 2d\right) \notag\\
& \le 2\left(\frac{(a+d)+ (a+d)+ 2d}{3}\right)^{3}
 = \frac{2}{27}. \notag
\end{align}
Moreover, equality holds only if $a=b=c=d=1/6$ and $x_1= \ldots =x_6 = 1/6$.
\end{proof}

\begin{lemma}\label{lemma-lagrang-K53=}
Suppose that $\mathcal{T}$ is a $3$-graph on five vertices with at most eight edges.
Then there exists an absolute constant $c>0$ such that $\lambda(\mathcal{T}) < 2/27 -c$.
\end{lemma}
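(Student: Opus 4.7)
The plan is a finite case analysis driven by monotonicity of the Lagrangian. Because $\lambda(\mathcal{T})$ is nondecreasing under adding edges, I may assume $|\mathcal{T}|=8$; the complement $K_5^3\setminus\mathcal{T}$ then consists of exactly two triples of $[5]$. Any two triples in $[5]$ must share at least one vertex, so up to isomorphism there are only two cases: \textbf{(A)} the non-edges share a pair, say $\{1,2,3\}$ and $\{1,2,4\}$; and \textbf{(B)} they share exactly one vertex, say $\{1,2,3\}$ and $\{1,4,5\}$. It will suffice to prove $\lambda(\mathcal{T})<2/27$ in each case and then to take $c:=2/27-\max\{\lambda(\mathcal{T}_A),\lambda(\mathcal{T}_B)\}>0$.

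In both cases my strategy is to group the edges of $\mathcal{T}$ according to a natural partition of the vertex set and then apply AM--GM (plus Maclaurin's inequality where appropriate) so as to reduce to a one-variable calculus problem. For Case~\textbf{(B)}, grouping edges by their intersection with $\{2,3\}$ and $\{4,5\}$ factors the weight polynomial as
\[
p_\mathcal{T}(x)=x_1(x_2+x_3)(x_4+x_5)+x_2x_3(x_4+x_5)+x_4x_5(x_2+x_3).
\]
Setting $y=x_1$, $z=x_2+x_3$, $w=x_4+x_5$ with $y+z+w=1$, the AM--GM bounds $x_2x_3\le z^2/4$, $x_4x_5\le w^2/4$, and $zw\le(1-y)^2/4$ collapse $p_\mathcal{T}(x)$ successively to $zw(3y+1)/4$ and then to $(1-y)^2(3y+1)/16$, which is maximized on $[0,1]$ at $y=1/9$ with value $16/243<2/27$.

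For Case~\textbf{(A)}, grouping the eight edges according to whether they contain vertex~$5$ yields
\[
p_\mathcal{T}(x)=x_5\cdot e_2(x_1,x_2,x_3,x_4)+x_3x_4(x_1+x_2),
\]
where $e_2$ is the second elementary symmetric polynomial. Writing $t:=x_1+x_2+x_3+x_4=1-x_5$, Maclaurin's inequality gives $e_2(x_1,\dots,x_4)\le 3t^2/8$ and three-variable AM--GM applied to $x_3,\,x_4,\,x_1+x_2$ (which sum to $t$) gives $x_3x_4(x_1+x_2)\le t^3/27$, so
\[
p_\mathcal{T}(x)\le \frac{3t^2(1-t)}{8}+\frac{t^3}{27}.
\]
A one-variable calculus check locates the maximum of the right-hand side at $t=54/73$ with value $(54/73)^2/8$, and the rational comparison $27\cdot 54^2 = 78732 < 85264 = 16\cdot 73^2$ confirms that this is strictly less than $2/27$. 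The main obstacle is choosing the right edge-regrouping in Case~\textbf{(A)}: a naive approach using only the symmetry $1\leftrightarrow 2$, $3\leftrightarrow 4$ produces KKT equations that reduce to an irrational cubic, whereas the grouping above yields a clean one-variable bound that beats $2/27$ by a definite margin and thus furnishes the absolute constant $c$.
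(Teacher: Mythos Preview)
Your proof is correct. You reduce to the two isomorphism types of $8$-edge $3$-graphs on five vertices, and for each you find a clean regrouping of the weight polynomial that, after AM--GM (and Maclaurin in Case~\textbf{(A)}), collapses to a one-variable problem whose maximum you compute explicitly: $16/243$ in Case~\textbf{(B)} and $(54/73)^2/8$ in Case~\textbf{(A)}, both strictly below $2/27$. I checked the factorizations, the calculus, and the final rational comparison; they are all correct.

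The paper takes a different and shorter route. Rather than treating the two cases separately, it observes that \emph{every} $3$-graph on five vertices with eight edges is (isomorphic to) a subgraph of $\mathcal{G}_6^2$: deleting vertex $4$ from $\mathcal{G}_6^2$ leaves exactly the two non-edges $123,126$ (your Case~\textbf{(A)}), and deleting vertex $3$ leaves $126,456$ (your Case~\textbf{(B)}). Hence $\lambda(\mathcal{T})\le \lambda(\mathcal{G}_6^2)$, and since Lemma~\ref{lemma-lagrang-G26} shows that $\lambda(\mathcal{G}_6^2)=2/27$ is attained \emph{only} at the uniform point $x_1=\cdots=x_6=1/6$, the restriction to any five-vertex face is strictly smaller; finiteness then gives the absolute constant $c$. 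What the paper's approach buys is brevity and a unified treatment via the ambient structure $\mathcal{G}_6^2$ that is already central to the paper. What your approach buys is self-containment (no appeal to Lemma~\ref{lemma-lagrang-G26} or to the embedding fact) and explicit numerical bounds on $\lambda(\mathcal{T})$ in each case, which in turn gives an explicit value of $c=2/27-16/243=2/243$.
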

\begin{proof}
This follows immediately from Lemma~\ref{lemma-lagrang-G26} and
the fact that every $3$-graph on $5$ vertices with $8$ edges is a subgraph of $\mathcal{G}_{6}^{2}$.
\end{proof}

\begin{lemma}\label{lemma-trans-star}
Let $\mathcal{T}$ be a $2$-covered $3$-graph on $k \ge 7$ vertices.
Suppose that $\tau(\mathcal{T}[S]) \le 1$ for all sets $S\subset V(\mathcal{T})$ with $|S| = 7$.
Then $\mathcal{T}$ is a star.
\end{lemma}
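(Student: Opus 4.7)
I would argue by contradiction: suppose $\mathcal{T}$ is not a star, and produce an $S\subseteq V(\mathcal{T})$ with $|S|=7$ and $\tau(\mathcal{T}[S])\ge 2$, contradicting the hypothesis. The global strategy is to locate a small family of edges of $\mathcal{T}$ whose union has at most six vertices and which admits no single-vertex transversal; padding this union to a $7$-set (possible since $k\ge 7$) then yields such an $S$, because any transversal of $\mathcal{T}[S]$ must hit each of those edges.

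The first case is when $\mathcal{T}$ contains two disjoint edges $A,B$: then $|A\cup B|=6$, and adjoining any seventh vertex $w\in V(\mathcal{T})\setminus(A\cup B)$ gives the desired $S$, since $\{A,B\}$ already has transversal number $2$. So assume $\mathcal{T}$ is intersecting, and suppose first that some pair $A,B\in\mathcal{T}$ satisfies $|A\cap B|=1$, say $A\cap B=\{v\}$. The non-star hypothesis supplies an edge $C$ with $v\notin C$, and since $\mathcal{T}$ is intersecting, $C$ must meet each of the disjoint $2$-sets $A\setminus\{v\}$ and $B\setminus\{v\}$. Hence $|C\cap(A\cup B)|\ge 2$, so $|A\cup B\cup C|\le 6$, and $A\cap B\cap C=\emptyset$ gives $\tau(\{A,B,C\})\ge 2$, as required.

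The remaining subcase is when $\mathcal{T}$ is intersecting and every two edges meet in at least two vertices; this is where both the $2$-covered hypothesis and the assumption $k\ge 7$ get used. Fix $A=\{v_1,v_2,v_3\}\in\mathcal{T}$; the non-star assumption at each $v_i$ yields an edge $B_i$ avoiding $v_i$, and the pairwise-$\ge 2$ condition forces $B_i=\{v_j,v_k,x_i\}$ with $\{j,k\}=[3]\setminus\{i\}$ and $x_i\notin A$. Comparing $B_1,B_2$ and using $v_1\notin B_1$, $v_2\notin B_2$, and $x_1,x_2\notin A$ leaves only $x_1=x_2$ as a way to achieve $|B_1\cap B_2|\ge 2$; iterating yields $x_1=x_2=x_3=:x$, so $\{A,B_1,B_2,B_3\}$ is a copy of $K_4^3$ on $\{v_1,v_2,v_3,x\}$. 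Finally, using $k\ge 7$, pick $w$ outside this quadruple; the $2$-covered hypothesis provides an edge $E\in\mathcal{T}$ with $\{w,v_1\}\subseteq E$, but then $|E\cap B_1|\le 1$ because $v_1,w\notin B_1$, contradicting the pairwise-$\ge 2$ assumption. I expect the main obstacle to be the bookkeeping in this final subcase — running the pinwheel argument that forces $x_1=x_2=x_3$ cleanly, and then exploiting $2$-coveredness with a vertex outside the $K_4^3$ to close the loop.
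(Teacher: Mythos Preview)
Your proof is correct and follows essentially the same case analysis as the paper: contradiction, then split into (i) two disjoint edges, (ii) intersecting with a pair meeting in one vertex, (iii) intersecting with all pairs meeting in two vertices, and in case (iii) build a copy of $K_4^3$. The only minor difference is the coup de gr\^ace in case (iii): the paper simply observes that the four edges $E_1,E_2,E_3,E_4$ on the $4$-set have $\tau=2$ and pads to seven vertices, whereas you invoke $2$-coveredness with an external vertex $w$ to produce an edge $E\ni w,v_1$ violating the ``every two edges meet in $\ge 2$ vertices'' case hypothesis; both work, and yours makes the use of $2$-coveredness more visible while the paper's is a hair shorter.
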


\noindent{\bf Remark.}
In fact, a weaker condition that $|S|=6$ is sufficient for the proof of Lemma~\ref{lemma-trans-star}.

\begin{proof}
Suppose that $\mathcal{T}$ is not a star.
Then for every vertex $v$ in $\mathcal{T}$ there exists an edge $E_v$ in $\mathcal{T}$ that does not contain $v$.

First notice that $\mathcal{T}$ cannot contain two disjoint edges. Therefore, $\mathcal{T}$ is intersecting.
Suppose that $\mathcal{T}$ contains two edges $E_1 = \{u,v_1,v_2\}$ and $E_2 = \{u, w_1, w_2\}$, where $\{v_1,v_2\}\cap \{w_1,w_2\} = \emptyset$.
Let $E_3\in \mathcal{T}$ be an edge that does not contain $u$.
Since $\mathcal{T}$ is intersecting, we may assume that $v_1,w_1\in E_3$.
Then, we have $|E_1\cup E_2\cup E_3|\le 6$, and $\tau(\{E_1,E_2,E_3\}) = 2$, a contradiction.
Therefore, we may assume that the intersection of every two edges in $\mathcal{T}$ has size two.
Let $E_1=\{u,v,w_1\}$ and $E_2 = \{u,v,w_2\}$ be two edges in $\mathcal{T}$.
By assumption there exists an edge $E_3 \in \mathcal{T}$ that does not contain $u$ and, hence, we have $E_3= \{v,w_1,w_2\}$.
Similarly there exists $E_4\in \mathcal{T}$ that does not contain $v$ and, hence, we have $E_4=\{u,w_1,w_2\}$.
Then, we have $|E_1\cup E_2\cup E_3\cup E_4|=4$, and $\tau(\{E_1,E_2,E_3,E_4\}) = 2$, a contradiction.
\end{proof}

\subsection{Proof of Theorem \ref{thm-Turan-num-M}}
In this section we complete the proof of Theorem \ref{thm-Turan-num-M}.

For $v \in V(\mathcal{H})$ and $E \in \mathcal{H}$,
$\mathcal{H}-v$ is obtained by removing $v$ and all edges containing $v$ from $\mathcal{H}$,
and $\mathcal{H}-E$ is obtained by removing $E$ from $\mathcal{H}$ and keeping $V(\mathcal{H})$ unchanged.

\begin{dfn}[Equivalence classes]\label{dfn-equivalent-class}
Let $\mathcal{H}$ be an $r$-graph and $u,v$ be two non-adjacent (i.e. no edge containing both) vertices in $\mathcal{H}$.
Then $u$ and $v$ are equivalent if $L(u) = L(v)$, otherwise they are \textit{non-equivalent}.
If $u$ and $v$ are equivalent, then we write $u\sim v$.
Let $C_v$ denote the equivalence class of $v$.
\end{dfn}

\noindent\textbf{Algorithm 1} (Symmetrization without cleaning)
Let $\mathcal{H}$ be an $r$-graph.
We perform the following operation as long as there are two non-adjacent non-equivalent vertices in $\mathcal{H}$.
Let $u,v$ be two such vertices with $d(u)\ge d(v)$.
Then we delete all vertices from $C_v$
and duplicate $u$ using $|C_v|$ vertices and still label these new vertices with labels in $C_v$.
Another way to view this operation is that we remove all edges in $\mathcal{H}$ that have nonempty intersection with $C_v$
and for every $E \in \mathcal{H}$ with $u \in E$ we add $E - \{u\} \cup \{v'\}$ for all $v' \in C_v$ into $\mathcal{H}$.
We terminate the process when there is no non-adjacent non-equivalent pair.

\bigskip

Note that the number of equivalence classes in $\mathcal{H}$ strictly decreases after each step that can be performed,
so Algorithm 1 always terminates.
On the other hand, since symmetrization only deletes and duplicates vertices, by Lemma \ref{lemma-M=homM},
Algorithm 1  preserves the $\mathcal{M}$-freeness of $\mathcal{H}$.
The following lemma is immediate from the definition.

\begin{lemma}\label{lemma-Ht-algo}
Let $\mathcal{H}_{t}$ be the $3$-graph obtained from $\mathcal{H}$ by applying Algorithm 1,
and let $T \subset V(\mathcal{H})$ such that $T$ contains exactly one vertex from each equivalence class of $\mathcal{H}_{t}$.
Then,
\begin{enumerate}[label=(\alph*)]
\item $|\mathcal{H}_{t}|\ge |\mathcal{H}|$.
\item $\mathcal{H}_{t}[T]$ is $2$-covered and $\mathcal{H}_{t}$ is a blowup of $\mathcal{H}_{t}[T]$.
\end{enumerate}
\end{lemma}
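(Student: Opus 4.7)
The plan is to verify both claims by analyzing a single symmetrization step of Algorithm 1 and then appealing to the termination condition for part (b).

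For (a), I first observe that any two vertices in the same equivalence class are non-adjacent (this is part of Definition~\ref{dfn-equivalent-class}), so every edge of $\mathcal{H}$ meets $C_v$ in at most one vertex. Consequently the edges touching $C_v$ number exactly $|C_v|\cdot d(v)$, and these are precisely what the symmetrization step deletes. I would then check that $u$ is non-adjacent to every vertex of $C_v$, which follows from $u\not\sim v$ combined with $L(v')=L(v)$ for each $v'\in C_v$: if some edge contained both $u$ and $v'$, then applying $L(v')=L(v)$ would force an edge containing both $u$ and $v$, contradicting non-adjacency. This means that the sets $E-\{u\}\cup\{v'\}$ introduced by the step, one for each $v'\in C_v$ and each of the $d(u)$ edges $E\ni u$, are $|C_v|\cdot d(u)$ distinct new edges. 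Hence the net change in edge count is $|C_v|(d(u)-d(v))\ge 0$. Since each symmetrization step strictly decreases the number of equivalence classes, Algorithm 1 terminates after finitely many steps, and iterating the inequality gives $|\mathcal{H}_t|\ge |\mathcal{H}|$.

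For (b), the termination criterion tells me that in $\mathcal{H}_t$ any two vertices are either adjacent or equivalent. Given two distinct representatives $u,v\in T$, they lie in different classes and so are adjacent: fix an edge $\{u,v,w\}\in\mathcal{H}_t$ witnessing this. Since within any class no two vertices are adjacent, $w\notin C_u\cup C_v$; let $w'\in T$ be the representative of $C_w$. The relation $L(w)=L(w')$ then yields $\{u,v,w'\}\in\mathcal{H}_t$, i.e.\ an edge of $\mathcal{H}_t[T]$ covering $\{u,v\}$, so $\mathcal{H}_t[T]$ is $2$-covered. The same ``swap one endpoint for an equivalent vertex'' trick, applied iteratively to all three coordinates, shows that whether $\{a,b,c\}$ belongs to $\mathcal{H}_t$ depends only on the equivalence classes of $a,b,c$, which is precisely the statement that $\mathcal{H}_t$ is the blowup of $\mathcal{H}_t[T]$ with part sizes $|C_t|$ for $t\in T$.

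Neither step presents a serious obstacle; the only point requiring care is verifying that the new edges counted in (a) are all distinct from each other and from the surviving edges, which reduces to the two observations that $C_v$ is an independent set and that $u$ is non-adjacent to every vertex of $C_v$.
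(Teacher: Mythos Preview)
Your argument is correct and is precisely the routine verification the paper has in mind when it states that the lemma is ``immediate from the definition'' without giving any further proof. One minor notational point: when you write ``$u\not\sim v$'' you are actually using the non-adjacency of $u$ and $v$ (which is part of the hypothesis of the symmetrization step), not non-equivalence, so it would be cleaner to say so explicitly.
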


Now we are ready to finish the proof of Theorem \ref{thm-Turan-num-M}.
\begin{proof}[Proof of Theorem \ref{thm-Turan-num-M}]
Let $\mathcal{H}$ be an $\mathcal{M}$-free $3$-graph on $n$ vertices.
Apply Algorithm 1 to $\mathcal{H}$ and let $\mathcal{H}_{t}$ denote the resulting $3$-graph.
Let $T\subset V(\mathcal{H})$ such that $T$ contains exactly one vertex from each equivalent class in $\mathcal{H}_{t}$,
and let $\mathcal{T} = \mathcal{H}_{t}[T]$.
By Lemma 3.6, in order to prove $|\mathcal{H}| \le 2n^3/27$,
it suffices to show $|\mathcal{H}_{t}| \le 2n^3/27$.
Since $\mathcal{H}_{t}$ is a blowup of $\mathcal{T}$,
by Lemma 2.1, it suffices to show that $\lambda(\mathcal{T}) \le 2/27$.
Next, we will consider two cases depending on the size of $T$: either $|T| \ge 7$ or $|T| \le 6$.

\medskip

\noindent\textbf{Case 1}: $|T| \ge 7$.\\
Since $\mathcal{T}$ is $2$-covered and it is $M_2$-free, $\tau(\mathcal{T}[S]) \le 1$ for all $S \subset T$ with $|S| = 7$,
and it follows from Lemma \ref{lemma-trans-star} that $\mathcal{T}$ is a star.

Let us calculate $\lambda(\mathcal{T})$.
We may assume that $V(\mathcal{T}) = [s]$ for some integer $s$ and $1$ is the center of $\mathcal{T}$.
Then,
\[
\begin{split}
p_{\mathcal{T}}(x) & \le x_1 \left( \sum_{\{i,j\}\subset [s]\setminus\{1\}} x_ix_j \right) \le \frac{s-2}{2(s-1)} x_1 (1-x_1)^2
< \frac{1}{2} x_1 (1-x_1)^2 \le \frac{2}{27},
\end{split}
\]
which implies that $\lambda(\mathcal{T}) < 2/27$.

\medskip

\noindent\textbf{Case 2}: $|T| \le 6$.\\
If $|T| \le 5$, then Lemmas \ref{lemma-langran-K43} and \ref{lemma-lagrang-K53=} imply that $\lambda(\mathcal{T}) < 0.67277$.
So we may assume that $|T| = 6$.

Lemma \ref{lemma-Ht-algo} implies that $\mathcal{T}$ is $2$-covered, so $\mathcal{T}\in \mathcal{K}_{6}^3$.
Since $\mathcal{H}_{t}$ does not contain any member in $M_3$ as a subgraph,
either $\mathcal{T} \subset \mathcal{G}^1_{n}$ or $\mathcal{T} \subset \mathcal{G}^2_{n}$ for some $n \ge 6$.
Due to the fact that $\mathcal{T}$ is $2$-covered again, either $\mathcal{T}$ is a star or $\mathcal{T} \subset \mathcal{G}^2_{6}$.
The former case has been handled by Case 1, so we may assume that $\mathcal{T} \subset \mathcal{G}^2_{6}$,
and it follows from Lemma \ref{lemma-lagrang-G26} that $\lambda(\mathcal{T}) \le \lambda(\mathcal{G}^2_{6}) \le 2/27$.
\end{proof}

\section{Stability of $\mathcal{M}$}\label{SEC:proof-stability-M}
In this section we will prove Theorem \ref{thm-2-stable}.
First we present an algorithm and some lemmas that will be used in the proof.

\subsection{Symmetrization}
Let $0 \le \alpha \le 1$ and $\mathcal{H}$ be a $3$-graph.
Then $\mathcal{H}$ is $\alpha$-dense if $\delta(\mathcal{H}) \ge \alpha\binom{v(\mathcal{H})-1}{2}$.
Let $(V,\prec_{V})$ be a poset on $V$ with relation $\prec_{V}$.
For $S \subset V$ the induced poset of $(V,\prec_{V})$ on $S$ is denoted by $(S,\prec_{V})$.

\medskip

\noindent\textbf{Algorithm 2} ({Symmetrization and cleaning with threshold $\alpha$}).\\
\noindent\textbf{Input}: A $3$-graph $\mathcal{H}$.\\
\noindent\textbf{Operation}:
\begin{itemize}
\item
\textbf{Initial step}:
If $\delta(\mathcal{H}) \ge \alpha \binom{v(\mathcal{H})-1}{2}$,
then let $\mathcal{H}_{0} = \mathcal{H}$ and $V_0 = V(\mathcal{H})$.
Otherwise, we keep deleting vertices with the minimum degree one by one until
the remaining $3$-graph $\mathcal{H}_{0}$ is either empty or $\delta(\mathcal{H}_0) \ge \alpha \binom{v(\mathcal{H}_0)-1}{2}$.
Let $Z_0$ be the set of deleted vertices during this process so that $V_0:= V(\mathcal{H}_0) = V(\mathcal{H})-Z_0$.
\end{itemize}

Let $(V_0,\prec_{V_0})$ be the poset with $V_0$ itself an antichain,
i.e., there is no relation between any two vertices in $V_0$.

Suppose we are at the $i$-th step for some $i \ge 1$.
We terminate the algorithm if either
\begin{itemize}
\item[(a)]
$\mathcal{H}_{i-1} = \emptyset$ or

\item[(b)]
$\delta(\mathcal{H}_{i-1}) \ge \alpha\binom{v(\mathcal{H}_{i-1})-1}{2}$
and there is no pair of non-adjacent non-equivalent vertices.
\end{itemize}
Otherwise, we iterate the following two operations.

\begin{itemize}
\item[\empty]
\textbf{Step 1 (Symmetrization)}:
If $\mathcal{H}_{i-1}$ contains no pair of non-adjacent non-equivalent vertices,
then let $\mathcal{G}_{i} = \mathcal{H}_{i-1}$ and go to Step 2.
Otherwise, choose two non-adjacent non-equivalent vertices $u,v \in V(\mathcal{H}_{i-1})$ and assume that $d(u) \ge d(v)$.
Delete all vertices in $C_v$ and add $|C_v|$ new vertices into $C_u$ by duplicating $u$ and label these new vertices with labels in $C_v$,
which is the same as what we did in Algorithm 1.
Let $\mathcal{G}_{i}$ denote the resulting $r$-graph,
and update the poset $(V_{i-1},\prec_{V_{i-1}})$ by adding the following relations: $v' \prec u'$ for all $v' \in C_v$ and all $u' \in C_u$.
This new poset is well-defined as one will see from the following operations that
once two equivalence classes are merged they will never be split.

\item[\empty]
\textbf{Step 2 (Cleaning)}:
If $\delta(\mathcal{G}_{i}) \ge \alpha\binom{v(\mathcal{G}_{i})-1}{2}$, then let $\mathcal{H}_{i} = \mathcal{G}_{i}$
and $(V_i,\prec_{V_{i}}) = (V_{i-1},\prec_{V_{i-1}})$.
Otherwise let $\mathcal{L} = \mathcal{G}_{i}$ and repeat Steps 2.1 and 2.2.

\item[\empty]
\textbf{Step 2.1}: Let $B = \left\{a\in V(\mathcal{L}): d_{\mathcal{L}}(a) = \delta(\mathcal{L}) \right\}$
and choose a minimal element $z \in (B,\prec_{V_{i-1}})$.
Replace $\mathcal{L}$, $V_{i-1}$, and $(V_{i-1},\prec_{V_{i-1}})$
by $\mathcal{L} - z$, $V_{i-1}\setminus \{z\}$, and $(V_{i-1}\setminus\{z\},\prec_{V_{i-1}})$, respectively.
\item[\empty]
\textbf{Step 2.2}: If $\delta(\mathcal{L}) \ge \alpha \binom{v(\mathcal{L})-1}{2}$ or $\mathcal{L}=\emptyset$, then stop. Otherwise, go to Step 2.1.

Let $\mathcal{H}_{i} = \mathcal{L}$ and $(V_i,\prec_{V_i}) = (V_{i-1},\prec_{V_{i-1}})$.
Let $Z_{i}$ denote the set of vertices removed by Step 2.1 so that $\mathcal{H}_{i} = \mathcal{G}_{i} - Z_{i}$.
\end{itemize}

\noindent\textbf{Output}:  A $3$-graph $\mathcal{H}_{t}$ for some $t$ such that
either $\mathcal{H}_{t}$ is empty or
$\delta(\mathcal{H}_{t}) \ge \alpha\binom{v(\mathcal{H}_{t})-1}{2}$
and there is no pair of non-adjacent non-equivalent vertices in $\mathcal{H}_{t}$.

\medskip

\noindent{\bf Remark.}
The point of Step 2 is that the symmetrization step (Step 1)
could potentially bring down the degree of some of the vertices in the hypergraph,
making the pruning step (Step 2) necessary.

\medskip

Let $\epsilon > 0$ be sufficiently small and $n$ be sufficiently large.
Let $\mathcal{H}$ be an $\mathcal{M}$-free $3$-graph on $n$ vertices with $|\mathcal{H}| \ge 2n^3/27 - \epsilon n^3$.
Apply Algorithm 2 to $\mathcal{H}$ with threshold $\alpha = 4/9 - 3 \epsilon^{1/2}$ and suppose that it stops at the $t$-th step.
Let $\mathcal{H}_{t}$ denote the resulting $3$-graph and $W = V(\mathcal{H}_{t})$ and $\tilde{n} = |W|$.
For $0 \le i \le t$ let $\widetilde{\mathcal{H}}_{i}= \mathcal{H}_i[W]$ and $\widetilde{\mathcal{G}}_{i}= \mathcal{G}_i[W]$.
Note that $\widetilde{\mathcal{H}}_{0} = \mathcal{H}[W]$ and $\widetilde{\mathcal{G}}_{0}= \mathcal{G}[W]$,
and we will omit the subscript $0$ if there is no cause for confusion.
Let $Z = \bigcup_{i=0}^{t}Z_i$ be the set of vertices in $\mathcal{H}$ that were removed by Algorithm 2.
In the rest of the proof we will focus on $\widetilde{\mathcal{H}}_{i}$ and $\widetilde{\mathcal{G}}_{i}$.
Notice from Algorithm 2 that $\mathcal{H}_{i} = \mathcal{G}_{i} - Z_{i}$ and $Z_i \subset V(\mathcal{H}) \setminus W$,
therefore, $\widetilde{\mathcal{H}}_{i} = \widetilde{\mathcal{G}}_{i}$ for all $1 \le i \le t$.

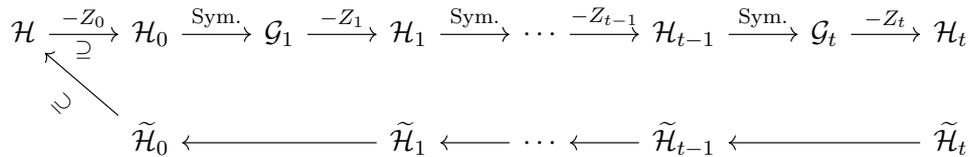
\begin{figure}[htbp]
\centering
\begin{tikzcd}
  \mathcal{H} \arrow{r}[swap]{\supseteq}{-Z_0}
& \mathcal{H}_{0} \arrow{r}{\text{Sym.}}
& \mathcal{G}_{1} \arrow{r}{-Z_1}
& \mathcal{H}_{1} \arrow{r}{\text{Sym.}}
& \cdots \arrow{r}{-Z_{t-1}}
& \mathcal{H}_{t-1} \arrow{r}{\text{Sym.}}
& \mathcal{G}_{t} \arrow{r}{-Z_t}
& \mathcal{H}_{t} \\
\empty
& \widetilde{\mathcal{H}}_{0} \arrow{ul}{\rotatebox{-45}{$\supseteq$}}
& & \widetilde{\mathcal{H}}_{1} \arrow{ll}
& \cdots \arrow{l}
& \widetilde{\mathcal{H}}_{t-1} \arrow{l}
& & \widetilde{\mathcal{H}}_{t} \arrow{ll}
\end{tikzcd}
\caption{The first line contains the $3$-graphs produced by Algorithm 2 and the second line contains the corresponding induced $3$-graphs on $W$.}
\label{figure:Algorithm2}
\end{figure}

\begin{lemma}\label{lemma-Hi+1-and-Hi}
For every $i \in [t]$ either $\widetilde{\mathcal{H}}_{i-1} = \widetilde{\mathcal{H}}_{i}$ or
there exist two nonempty equivalence classes $V_i \subset W$ and $U_i \subset W$
in $\widetilde{\mathcal{H}}_{i-1}$ such that
$\widetilde{\mathcal{H}}_{i}$ is obtained from $\widetilde{\mathcal{H}}_{i-1}$ by deleting all vertices in $V_i$
and adding $|V_i|$ new vertices by duplicating some vertex in $U_i$.
\end{lemma}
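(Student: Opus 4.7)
The plan is to analyze one iteration of Algorithm~2 and track how the induced subgraph on $W$ changes. Since the cleaning phase at step~$i$ only removes vertices of $V(\mathcal{H}) \setminus W$ (they all end up in $Z_i$), we have $\widetilde{\mathcal{H}}_i = \widetilde{\mathcal{G}}_i$, so it suffices to compare $\widetilde{\mathcal{G}}_i$ with $\widetilde{\mathcal{H}}_{i-1}$. If Step~1 at iteration~$i$ is vacuous then $\mathcal{G}_i = \mathcal{H}_{i-1}$ and trivially $\widetilde{\mathcal{H}}_i = \widetilde{\mathcal{H}}_{i-1}$. Otherwise Step~1 selects a non-adjacent non-equivalent pair $u, v \in V_{i-1}$ with $d(u) \ge d(v)$ and replaces the $\mathcal{H}_{i-1}$-equivalence class $C_v$ by duplicates of $u$. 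If $C_v \cap W = \emptyset$ then every triple altered by Step~1 meets $V(\mathcal{H}) \setminus W$, so $\widetilde{\mathcal{G}}_i = \widetilde{\mathcal{H}}_{i-1}$ and we are done. Hence the substantive case is $V_i := C_v \cap W \neq \emptyset$, and I set $U_i := C_u \cap W$ and claim $U_i \neq \emptyset$.

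Nonemptiness of $U_i$ is the main obstacle and is handled via the poset used in Step~2.1. After Step~1 at iteration~$i$ the classes $C_u$ and $C_v$ merge into a single equivalence class, and the relations $v' \prec u'$ are added for every $v' \in C_v$ and every $u' \in C_u$. A short induction on the iterations after $i$ shows that two vertices which are equivalent at some moment remain equivalent (hence of equal degree in every subsequent $\mathcal{L}$) as long as both still live, and that accumulated poset relations persist. Now suppose for contradiction that $U_i = \emptyset$; fix $v' \in V_i$ and consider the step at which the last surviving $u'' \in C_u$ is removed by Step~2.1. At that moment $v'$ is still present and equivalent to $u''$, so $v' \in B$; but $v' \prec u''$ prevents $u''$ from being minimal in $(B, \prec)$, contradicting Step~2.1's selection rule.

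With $V_i$ and $U_i$ both nonempty, the lemma follows by bookkeeping. Since $u$ and $v$ are non-adjacent in $\mathcal{H}_{i-1}$ and $C_u, C_v$ are equivalence classes there, the sets $V_i$ and $U_i$ are disjoint, pairwise non-adjacent within $\widetilde{\mathcal{H}}_{i-1}$, and each consists of vertices sharing a common link in $\widetilde{\mathcal{H}}_{i-1}$, so both qualify as equivalence classes in $\widetilde{\mathcal{H}}_{i-1}$. Finally, the link of any $v' \in V_i$ in $\widetilde{\mathcal{G}}_i$ consists precisely of those pairs in $\binom{W}{2}$ that lie in $L_{\mathcal{H}_{i-1}}(u)$, which coincides with the link in $\widetilde{\mathcal{H}}_{i-1}$ of any $u' \in U_i$. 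Therefore $\widetilde{\mathcal{H}}_i$ is obtained from $\widetilde{\mathcal{H}}_{i-1}$ by deleting the vertices of $V_i$ and adding $|V_i|$ new vertices that duplicate any fixed $u' \in U_i$, which is exactly the claimed description.
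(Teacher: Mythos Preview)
Your proof is correct and follows essentially the same approach as the paper: both handle the trivial cases $\mathcal{G}_i=\mathcal{H}_{i-1}$ and $C_v\cap W=\emptyset$, and both use the poset relation $v'\prec u'$ together with persistence of equivalence to derive a contradiction with Step~2.1's minimality rule. The only minor difference is that the paper proves the slightly stronger fact $C_u\subset W$ (and hence sets $U_i=C_u$), whereas you prove only $C_u\cap W\neq\emptyset$ and set $U_i=C_u\cap W$; your weaker statement already suffices for the lemma, and in fact your argument, applied to an arbitrary $u''\in C_u\setminus W$ rather than the last one, would yield the paper's stronger conclusion as well.
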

\begin{proof}
Fix $1 \le i \le t$ and suppose that in forming $\mathcal{G}_i$ from $\mathcal{H}_{i-1}$ in Algorithm 2 we deleted all vertices in $C_v$
and added $|C_v|$ new vertices by duplicating some $u \in C_u$,
where $C_v$ (resp. $C_u$) is the equivalence class of $v \in V(\mathcal{H}_{i-1})$ (resp. $u \in V(\mathcal{H}_{i-1})$) in $\mathcal{H}_{i-1}$.
Let $\widehat{C}_{u} = C_v \cup C_u$ and
notice that for every $i \le j \le t$
the set $\widehat{C}_{u} \cap V(\mathcal{G}_{j})$
(resp. $\widehat{C}_{u} \cap V(\mathcal{H}_{j})$) is an equivalence class in $\mathcal{G}_j$ (resp. $\mathcal{H}_j$).

If $C_v \cap W = \emptyset$, then $\widetilde{\mathcal{H}}_{i-1} = \widetilde{\mathcal{H}}_{i}$ and we are done.
So we may assume that $C_v \cap W \neq \emptyset$.

First, we claim that $C_u \subset W$.
Indeed, suppose that there exists $u' \in C_u \setminus W$.
Then it means that $u'$ was removed at the $j$-th step for some $i \le j \le t$.
Since all $v' \in C_{v}$ satisfy $v' \prec_{V_k} u'$ and $d_{\mathcal{G}_k}(v') = d_{\mathcal{G}_k}(u')$ for all $i \le k \le j$,
by definition of Algorithm 2 all vertices in $C_v$ must be removed before $u'$ was removed,
which implies that $C_v \cap W = \emptyset$, a contradiction.
Therefore, $C_u \subset W$.

Let $V_i = C_v \cap W$ and $U_i = C_u$ and note that neither of them is empty.
Since $C_v$ and $C_u$ are equivalence classes in $\mathcal{H}_{i-1}$, $V_i$ and $U_i$ are equivalence classes in $\widetilde{\mathcal{H}}_{i-1}$.
According to Algorithm 2, $\widetilde{\mathcal{H}}_{i}$ is obtained from $\widetilde{\mathcal{H}}_{i-1}$ by deleting all vertices in $V_i$
and adding $|V_i|$ new vertices by duplicating some vertex in $U_i$.
\end{proof}

The following two lemmas show that the size of the set $Z$ of vertices removed by Algorithm 2 is small,
and the induced subgraph $\widetilde{\mathcal{H}}_i$ of $\mathcal{H}_{i}$ on $W$ has a large minimum degree for $0\le i \le t$.
Their proofs can be found in \cite{BIJ17}.

\begin{lemma}\label{lemma-size-Z-W}
We have $|Z| \le 3 \epsilon^{1/2} n$, and hence $\tilde{n} \ge n - 3 \epsilon^{1/2} n$.
\end{lemma}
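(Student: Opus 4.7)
The plan is to bound $|Z|$ by an edge-counting argument. Algorithm~2 alternates symmetrization steps, which do not decrease the edge count of the current hypergraph, and cleaning steps, each of which removes a single vertex whose degree lies below the threshold $\alpha\binom{v-1}{2}$. Combined with the upper bound $|\mathcal{H}_t|\le 2\tilde{n}^3/27$ coming from Theorem~\ref{thm-Turan-num-M}, this will force $|Z|$ to be small.

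First, I would verify that each symmetrization step is non-decreasing in edges. Suppose we replace the equivalence class $C_v$ by $|C_v|$ duplicates of $u$, where $d(u)\ge d(v)$. By Definition~\ref{dfn-equivalent-class}, both $C_u$ and $C_v$ are independent (any two equivalent vertices are non-adjacent), and the chosen representatives $u,v$ are non-adjacent. A quick check using $L(u')=L(u)$ for $u'\in C_u$ and $L(v')=L(v)$ for $v'\in C_v$ shows that moreover no edge has one vertex in $C_u$ and another in $C_v$. Hence every edge incident to $C_u\cup C_v$ touches it in exactly one vertex, and the number of such edges changes from $|C_u|d(u)+|C_v|d(v)$ to $(|C_u|+|C_v|)d(u)$, an increase of $|C_v|(d(u)-d(v))\ge 0$.

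Next, I would bound the edges lost during cleaning. Enumerate the vertices of $Z$ as $z_1,\ldots,z_k$ with $k=|Z|$, in the order they are deleted by Algorithm~2. When $z_j$ is removed, the ambient hypergraph has $n-j+1$ vertices and its minimum degree is strictly less than $\alpha\binom{n-j}{2}$. By the hockey-stick identity,
\[
|\mathcal{H}|-|\mathcal{H}_t|\;\le\;\alpha\sum_{j=1}^{k}\binom{n-j}{2}\;=\;\alpha\left(\binom{n}{3}-\binom{n-k}{3}\right).
\]

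Finally, since $\mathcal{H}_t$ is $\mathcal{M}$-free on $\tilde{n}=n-k$ vertices, Theorem~\ref{thm-Turan-num-M} gives $|\mathcal{H}_t|\le 2(n-k)^3/27$. Combining this with $|\mathcal{H}|\ge 2n^3/27-\epsilon n^3$ yields
\[
\left(\frac{2}{27}-\frac{\alpha}{6}\right)\bigl(n^3-(n-k)^3\bigr)\;\le\;\epsilon n^3+O(n^2).
\]
The threshold $\alpha=4/9-3\epsilon^{1/2}$ is calibrated precisely so that $2/27-\alpha/6=\epsilon^{1/2}/2$, giving $n^3-(n-k)^3\le 2\epsilon^{1/2}n^3+O(n^2)$. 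Since $n^3-(n-k)^3\ge kn^2$, I conclude $k\le 2\epsilon^{1/2}n+o(n)\le 3\epsilon^{1/2}n$ for sufficiently large $n$ and small $\epsilon$. The main obstacle is a clean verification that symmetrization is edge-non-decreasing, which hinges on the ``total non-adjacency'' of $C_u\cup C_v$; once that is in place, the remaining steps are elementary algebra combined with the Tur\'an-type bound from Theorem~\ref{thm-Turan-num-M}.
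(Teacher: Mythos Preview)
Your argument is correct and is precisely the standard edge-counting proof: symmetrization never loses edges (using that $C_u\cup C_v$ is an independent set, which you verified), each cleaning deletion removes fewer than $\alpha\binom{n-j}{2}$ edges, and the Tur\'an bound $|\mathcal{H}_t|\le 2\tilde{n}^3/27$ from Theorem~\ref{thm-Turan-num-M} closes the estimate. The paper itself does not prove this lemma but defers to~\cite{BIJ17}; your proof is the expected one and matches the argument there.
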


\begin{lemma}\label{lemma-low-bound-tilde-Hi}
For all $0 \le i \le t$,
\[
\delta(\widetilde{\mathcal{H}}_{i}) > \left({4}/{9}- 10 \epsilon^{1/2} \right)\binom{\tilde{n}-1}{2}
\]
and, in particular,
\[
|\widetilde{\mathcal{H}}_{i}| > \left({4}/{9}- 10 \epsilon^{1/2} \right)\binom{\tilde{n}}{3}.
\]
\end{lemma}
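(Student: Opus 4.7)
The plan is to prove both inequalities simultaneously from a uniform minimum-degree bound. First I will exploit the key structural fact about Algorithm 2: vertices are only ever deleted in the initial step and in the cleaning phases (Step 2.1), while the symmetrization step only relabels. Therefore $W = V(\mathcal{H}_t) \subset V(\mathcal{H}_i)$ for every $0 \le i \le t$, so every $w \in W$ is a genuine vertex of $\mathcal{H}_i$ for each such $i$.

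Next I would use the termination rule of Step 2 of the algorithm, which guarantees that $\mathcal{H}_i$ is $\alpha$-dense with $\alpha = 4/9 - 3\epsilon^{1/2}$. Hence for any $w \in W$,
\[
d_{\mathcal{H}_i}(w) \ge \alpha \binom{v(\mathcal{H}_i)-1}{2} \ge \alpha \binom{\tilde n - 1}{2},
\]
since $v(\mathcal{H}_i) \ge |W| = \tilde n$. To pass from $\mathcal{H}_i$ to $\widetilde{\mathcal{H}}_i = \mathcal{H}_i[W]$, I need to bound the number of edges containing $w$ that have at least one vertex in $V(\mathcal{H}_i) \setminus W$. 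This set is contained in $Z$, so by Lemma \ref{lemma-size-Z-W} it has size at most $3\epsilon^{1/2}n$, and each such bad edge has one of the remaining two vertices in $V(\mathcal{H}_i) \setminus \{w\}$, giving at most $|Z|(v(\mathcal{H}_i)-2) \le 3\epsilon^{1/2} n \cdot (n-2)$ such edges. Consequently
\[
d_{\widetilde{\mathcal{H}}_i}(w) \ge \alpha \binom{\tilde n - 1}{2} - 3\epsilon^{1/2} n^2.
\]

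To finish, I would convert the additive error into a multiple of $\binom{\tilde n - 1}{2}$ using $n \le \tilde n / (1 - 3\epsilon^{1/2})$ from Lemma \ref{lemma-size-Z-W}. For $\epsilon$ sufficiently small and $n$ sufficiently large, $3\epsilon^{1/2} n^2 \le 7\epsilon^{1/2} \binom{\tilde n - 1}{2}$, which combined with $\alpha = 4/9 - 3\epsilon^{1/2}$ yields the desired bound $d_{\widetilde{\mathcal{H}}_i}(w) > (4/9 - 10\epsilon^{1/2}) \binom{\tilde n - 1}{2}$. The second inequality then follows by summing degrees: $3|\widetilde{\mathcal{H}}_i| = \sum_{w \in W} d_{\widetilde{\mathcal{H}}_i}(w) > \tilde n (4/9 - 10\epsilon^{1/2}) \binom{\tilde n - 1}{2}$, which equals $3(4/9 - 10\epsilon^{1/2})\binom{\tilde n}{3}$.

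The only mildly delicate point is the constant tracking in the last paragraph, ensuring the slack of $7\epsilon^{1/2}$ provided by the gap between $4/9 - 3\epsilon^{1/2}$ and $4/9 - 10\epsilon^{1/2}$ is large enough to absorb the error $3\epsilon^{1/2} n^2$; everything else is either a direct consequence of the algorithm's design or a citation of Lemma \ref{lemma-size-Z-W}.
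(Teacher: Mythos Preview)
Your argument is correct and is precisely the standard one for this type of statement: use that each $\mathcal{H}_i$ is $\alpha$-dense by construction, that $W\subset V(\mathcal{H}_i)$, and subtract the at most $|Z|\,n$ pairs meeting $Z$ from each link. The constant tracking in the last paragraph goes through as you say for $\epsilon$ small and $n$ large. Note that the paper itself does not give a proof of this lemma but simply cites \cite{BIJ17}; your write-up is exactly the argument one would expect to find there.
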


Notice that $\widetilde{\mathcal{H}}_t = \mathcal{H}_t$ and $\widetilde{\mathcal{H}}_0 = \mathcal{H}[W]$.
In order to prove Theorem \ref{thm-2-stable} it suffices to
show that $\widetilde{\mathcal{H}}_0$ is either semibipartite or $\mathcal{G}^2_6$-colorable.
We will proceed by backward induction on $i$. The following lemma establishes the base case of the induction.

\begin{lemma}\label{lemma-Ht-semi-or-G2-color}
Let $T \subset W$ such that $T$ contains exactly one vertex in each equivalence class in $\mathcal{H}_{t}$
and $\mathcal{T} = {\mathcal{H}}_{t}[T]$.
Then, either $\mathcal{T}$ is a star or $\mathcal{T} \subset \mathcal{G}_{6}^{2}$ and, in particular,
$\mathcal{H}_{t}$ is either semibipartite or $\mathcal{G}^2_6$-colorable.
\end{lemma}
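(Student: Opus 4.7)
The plan is to follow the same blueprint as in the proof of Theorem~\ref{thm-Turan-num-M} and then use the minimum-degree information supplied by Algorithm 2 to rule out all but the two desired configurations. Because Algorithm 2 terminates only when $\mathcal{H}_{t}$ contains no non-adjacent non-equivalent pair, every two representatives in $T$ must be adjacent, so $\mathcal{T}$ is $2$-covered and $\mathcal{H}_{t}$ is a blowup of $\mathcal{T}$. Both operations used by the algorithm (symmetrization, which via Lemma~\ref{lemma-M=homM} preserves $\mathcal{M}$-freeness, and vertex deletion) keep the $3$-graph $\mathcal{M}$-free, so $\mathcal{T}$ inherits $\mathcal{M}$-freeness from $\mathcal{H}_{t}$. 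The argument then splits according to $|T|$.

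If $|T|\ge 7$, then the $M_2$-freeness and $2$-coveredness of $\mathcal{T}$ imply $\tau(\mathcal{T}[S])\le 1$ for every $S\in\binom{T}{7}$, so Lemma~\ref{lemma-trans-star} gives that $\mathcal{T}$ is a star. If $|T|=6$, then $\mathcal{T}\in\mathcal{K}_6^3$, and $M_3$-freeness forces $\mathcal{T}\subset\mathcal{G}_n^1$ or $\mathcal{T}\subset\mathcal{G}_n^2$ for some $n$. In the first subcase, at most one vertex of $V(\mathcal{T})$ can lie inside the part $A$ of $\mathcal{G}_n^1$ (two such vertices would form a pair not covered by any edge of $\mathcal{G}_n^1$), and since $\mathcal{T}$ has edges exactly one does; that vertex lies in every edge, so $\mathcal{T}$ is a star. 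In the second subcase, the function sending a vertex of $\mathcal{T}$ to its part in the blowup $\mathcal{G}_n^2$ of $\mathcal{G}_6^2$ must be injective by $2$-coveredness, so $\mathcal{T}$ embeds into $\mathcal{G}_6^2$.

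The case $|T|\le 5$ is where the minimum-degree hypothesis is needed. Combining Lemma~\ref{lemma-blowup-lagrang} with the lower bound $|\mathcal{H}_{t}|>(4/9-10\epsilon^{1/2})\binom{\tilde n}{3}$ from Lemma~\ref{lemma-low-bound-tilde-Hi} gives
\[
\lambda(\mathcal{T})\ge\frac{|\mathcal{H}_{t}|}{\tilde n^{3}}>\frac{2}{27}-C\epsilon^{1/2}
\]
for an absolute constant $C$ and $\tilde n$ large. For $|T|\le 4$ this contradicts Lemma~\ref{lemma-langran-K43}, while for $|T|=5$ the $M_1$-freeness of $\mathcal{T}$ forbids $K_5^{3-}$ and hence forces $|\mathcal{T}|\le 8$, so Lemma~\ref{lemma-lagrang-K53=} gives $\lambda(\mathcal{T})<2/27-c$, again a contradiction once $\epsilon$ is sufficiently small. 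Once $\mathcal{T}$ is known to be a star or a subgraph of $\mathcal{G}_6^2$, the blowup relation between $\mathcal{H}_{t}$ and $\mathcal{T}$ immediately supplies semibipartiteness (the blowup of the center being one of the two parts) or $\mathcal{G}_6^2$-colorability, respectively. I expect the main bookkeeping obstacle to be keeping the $\epsilon$-dependent error term in the Lagrangian lower bound smaller than the absolute gap $c$ of Lemma~\ref{lemma-lagrang-K53=}, and checking that the equivalence classes produced by Algorithm 2 restrict cleanly to $W$ so that $\mathcal{H}_{t}$ genuinely is a blowup of $\mathcal{T}$.
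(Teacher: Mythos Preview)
Your proposal is correct and follows essentially the same approach as the paper's proof: rule out $|T|\le 5$ via the Lagrangian bounds (Lemmas~\ref{lemma-langran-K43} and~\ref{lemma-lagrang-K53=}) combined with the size lower bound from Lemma~\ref{lemma-low-bound-tilde-Hi}, handle $|T|\ge 7$ via Lemma~\ref{lemma-trans-star}, and handle $|T|=6$ via $M_3$-freeness plus $2$-coveredness. Your concern about equivalence classes ``restricting cleanly to $W$'' is moot, since by definition $W=V(\mathcal{H}_t)$, so $\mathcal{H}_t$ is already the full hypergraph on $W$ and the blowup relation is immediate.
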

\begin{proof}
First we claim that $|T| \ge 6$.
Indeed, suppose that $|T| \le 5$.
Then, Lemmas \ref{lemma-langran-K43} and \ref{lemma-lagrang-K53=} imply that
$\lambda(\mathcal{T}) < 0.067277$.
It follows from Lemma \ref{lemma-blowup-lagrang} that
$|\mathcal{H}_{t}| < 0.067277 \tilde{n}^3 < (4/9- 10 \epsilon^{1/2})\binom{\tilde{n}}{3}$, which contradicts Lemma \ref{lemma-low-bound-tilde-Hi}.
Therefore, $|T| \ge 6$.

Suppose that $|T| \ge 7$.
Since $\mathcal{T}$ is $2$-covered and $M_2$-free, $\tau(\mathcal{T}[S]) \le 1$ for all $S \subset T$ with $|S| = 7$.
So by Lemma \ref{lemma-trans-star}, $\mathcal{T}$ is a star, and hence $\mathcal{H}_{t}$ is semibipartite.

Suppose that $|T| = 6$.
Since $\mathcal{T}\in \mathcal{K}_{6}^3$ and $\mathcal{H}$ is $M_3$-free,
either $\mathcal{T} \subset \mathcal{G}^1_{m}$ or $\mathcal{T} \subset \mathcal{G}^2_{m}$ for some integer $m \ge 6$.
Moreover, due to the fact that $\mathcal{T}$ is $2$-covered,
either $\mathcal{T}$ is a star or $\mathcal{T} \subset \mathcal{G}^2_{6}$.
In the former case, $\mathcal{H}_{t}$ is semibipartite,
and in the latter case, $\mathcal{H}_{t}$ is $\mathcal{G}^2_6$-colorable.
\end{proof}

Next, we will consider two cases in the following two subsections depending on the structure of $\mathcal{H}_{t}$.


\subsection{Semibipartite}
In this section we will prove the following statement.

\begin{lemma}\label{main-lemma-Ht-semibipartite}
Suppose that $\mathcal{H}_{t}$ is semibipartite.
Then $\widetilde{\mathcal{H}}_{i}$ is semibipartite for all $0\le i \le t$.
In particular, $\mathcal{H}[W] = \widetilde{\mathcal{H}}_{0}$ is semibipartite.
\end{lemma}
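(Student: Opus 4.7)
\medskip

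\noindent\textbf{Proof plan.}
The plan is to proceed by backward induction on $i \in \{t, t-1, \ldots, 0\}$. The base case $i=t$ is the hypothesis that $\widetilde{\mathcal{H}}_t = \mathcal{H}_t$ is semibipartite. For the inductive step, I would assume that $\widetilde{\mathcal{H}}_i$ is semibipartite with partition $V(\widetilde{\mathcal{H}}_i) = A \cup B$ and deduce the same for $\widetilde{\mathcal{H}}_{i-1}$. By Lemma~\ref{lemma-Hi+1-and-Hi}, either $\widetilde{\mathcal{H}}_{i-1} = \widetilde{\mathcal{H}}_i$, in which case there is nothing to prove, or there exist equivalence classes $V_i,U_i \subset W$ of $\widetilde{\mathcal{H}}_{i-1}$ such that $\widetilde{\mathcal{H}}_i$ is obtained from $\widetilde{\mathcal{H}}_{i-1}$ by erasing the link of every vertex of $V_i$ and replacing it with the link $L_u$ of some $u\in U_i$. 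Thus the two $3$-graphs agree outside the edges meeting $V_i$, and the only difference is that the common link of $V_i$-vertices is $L_v$ in $\widetilde{\mathcal{H}}_{i-1}$ but $L_u$ in $\widetilde{\mathcal{H}}_i$, where $L_v \neq L_u$ and both are disjoint from $V_i \cup U_i$ (by non-adjacency of equivalence classes).

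First I would observe that in a semibipartite graph with no isolated vertex, any two equivalent vertices lie on the same side of the partition: an $A$-vertex has link inside $\binom{B}{2}$, while a $B$-vertex has link inside $A \times (B\setminus\{\cdot\})$, and these supports are disjoint. Since Lemma~\ref{lemma-low-bound-tilde-Hi} gives $\delta(\widetilde{\mathcal{H}}_i) > 0$, the equivalence class $V_i \cup U_i$ of $\widetilde{\mathcal{H}}_i$ is entirely contained in $A$ or entirely in $B$. In either case $L_u$ lives in a prescribed pattern (all in $\binom{B}{2}$, or all straddling $A$ and $B$).

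The central claim is then a dichotomy for $L_v$: either $L_v$ has the same pattern as $L_u$ (so that $(A,B)$ is also a semibipartite partition of $\widetilde{\mathcal{H}}_{i-1}$), or it has the opposite pattern (so that moving $V_i$ to the other side of the partition yields one for $\widetilde{\mathcal{H}}_{i-1}$). To prove it, I would exploit three ingredients: (a) $\widetilde{\mathcal{H}}_{i-1}$ is $\mathcal{M}$-free because Algorithm~2 preserves $\mathcal{M}$-freeness (Lemma~\ref{lemma-M=homM}); (b) the link-size bound $|L_v| \ge (4/9 - O(\epsilon^{1/2}))\binom{\tilde{n}-1}{2}$ (Lemma~\ref{lemma-low-bound-tilde-Hi}); and (c) the similarly huge link $|L_u|$ covering almost all of $\binom{B}{2}$ (or of $A\times B$, respectively), together with $|A|\approx \tilde{n}/3$ and $|B|\approx 2\tilde{n}/3$ forced by Observation~\ref{obs-size-G1-G2} applied to $\widetilde{\mathcal{H}}_i$. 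Concretely, if $V_i \cup U_i \subset A$ and $L_v$ contained pairs of two different types, I would pick a pair $e \in L_v$ of the "wrong" type and combine it with $v$, $u$, and a carefully chosen pair $\{b_1,b_2\}$ in $L_u \cap L_v$, then expand the resulting configuration using the abundance of common link edges to an explicit copy of $K_5^{3-}$, or to a $2$-covered $6$-vertex configuration that fails to embed into either $\mathcal{G}^1_n$ or $\mathcal{G}^2_n$, or to a $2$-covered $7$-vertex configuration whose induced core has transversal number~$\ge 2$. Any one of these yields a member of $M_1 \cup M_2 \cup M_3$ inside $\widetilde{\mathcal{H}}_{i-1}$, a contradiction. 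The case $V_i \cup U_i \subset B$ is analogous.

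The main obstacle is precisely the last constructive step: verifying by cases on the types of pairs appearing in $L_v$ that an offending "mixed" link forces a forbidden configuration. The subtlety is that $M_3$ is defined by a non-embedding condition into both extremal families simultaneously, so ruling out the mixed case requires pinning down a $6$-vertex $2$-covered subgraph that is neither $\mathcal{G}^1_n$-colorable (i.e., not semibipartite after duplication) nor $\mathcal{G}^2_6$-colorable. This is where the bulk of the casework and the quantitative use of the large minimum degree will lie.
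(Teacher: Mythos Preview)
Your backward–induction framework and the use of Lemma~\ref{lemma-Hi+1-and-Hi} match the paper exactly, and your observation that the merged class $V_i\cup U_i$ lies entirely on one side of the semibipartite partition of $\widetilde{\mathcal{H}}_i$ is also used there. The essential divergence is in how the inductive step is closed. You propose to analyse $L_v$ directly and establish a dichotomy (``same pattern'' vs.\ ``opposite pattern''), deriving a forbidden subgraph whenever $L_v$ mixes types. The paper sidesteps this casework entirely by invoking the F\"uredi--Pikhurko--Simonovits stability theorem for $\mathbb{F}_{3,2}$: since $\delta(\widetilde{\mathcal{H}}_{i-1})>(4/9-o(1))\binom{\tilde n}{2}$, if $\widetilde{\mathcal{H}}_{i-1}$ is $\mathbb{F}_{3,2}$-free it is automatically semibipartite and the induction step is done. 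Otherwise one takes a copy of $\mathbb{F}_{3,2}$; because $\widetilde{\mathcal{H}}_{i-1}[W']=\widetilde{\mathcal{H}}_i[W']$ is already semibipartite, this copy must hit $C_v$ in exactly one vertex, and two further common neighbours $w_5,w_6\in R$ together with any $u\in L$ giving $uw_5w_6\in\widetilde{\mathcal{H}}_{i-1}$ produce a member of $M_2$ with core $\{v,w_1,\ldots,w_6\}$ (note $\tau(\mathbb{F}_{3,2})=2$). This single-shot construction replaces all of your projected case analysis. Your direct route is in principle viable---indeed it parallels what the paper is forced to do in the $\mathcal{G}^2_6$-colorable case, where no analogue of the FPS theorem is available---but it would be substantially longer, and the ``main obstacle'' you flag (especially producing members of $M_3$) is genuinely delicate. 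The paper's approach buys a much shorter argument at the cost of importing an external stability theorem.
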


We will use the following stability theorem due to F\"{u}redi, Pikhurko, and Simonovits \cite{FPS05} to prove Lemma \ref{main-lemma-Ht-semibipartite}.

Let $\mathbb{F}_{3,2}$ be the $3$-graph with vertex set $[5]$ and edges set $\{123,124,125,345\}$.
F\"{u}redi, Pikhurko, and Simonovits \cite{FPS05} proved that if $n$ is sufficiently large,
then $\mathcal{G}_{n}^{1}$ is the unique $\mathbb{F}_{3,2}$-free $3$-graph on $n$ vertices with the maximum number of edges.
Moreover, they proved the following strong stability result.

\begin{thm}[F\"{u}redi-Pikhurko-Simonovits \cite{FPS05}]\label{THM:FPS-independent-neighbor}
Let $\gamma \le 1/125$ be fixed and $n\ge n_0$.
Let $\mathcal{H}$ be an $\mathbb{F}_{3,2}$-free $3$-graph on $n$ vertices with
$\delta(\mathcal{H})> (4/9-\gamma)\binom{n}{2}$.
Then $\mathcal{H}$ is semibipartite.
\end{thm}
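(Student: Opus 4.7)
The plan is to adapt the standard link-and-stability framework, exploiting the local reformulation of $\mathbb{F}_{3,2}$-freeness: for every pair $\{u,v\}$ with codegree $d(u,v) \geq 3$, the common neighborhood $N(u,v)$ contains no edge of $\mathcal{H}$, i.e.\ $N(u,v)$ is independent in $\mathcal{H}$. A first consequence is a codegree cap: any independent set $I$ in $\mathcal{H}$ satisfies $|\mathcal{H}| \leq \binom{n}{3}-\binom{|I|}{3}$, and the minimum-degree hypothesis gives $|\mathcal{H}| = \tfrac{1}{3}\sum_v d(v) \geq (4/9-\gamma)\binom{n}{3}$; combining these forces every $N(u,v)$ to have size at most $((5/9)^{1/3}+o(1))n$, which together with the identity $\sum_{\{u,v\}} d(u,v) = 3|\mathcal{H}| \approx 4n^3/9$ yields sharp two-sided control on codegrees.

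I would then identify the candidate small side $A$ by a threshold: letting $\eta = \eta(\gamma) \to 0$ appropriately, set
\[
A := \{v : v \text{ lies in at least } (n/3 - \gamma^{1/2}n) \text{ pairs } \{u,v\} \text{ with } d(u,v) < \eta n\}, \qquad B := V\setminus A.
\]
A convexity and counting argument using the density lower bound and the min-degree condition should yield $|A| = n/3 \pm O(\gamma^{1/2}n)$, show that almost all light pairs lie inside $A$, and force pairs inside $B$ or crossing $A$--$B$ to have codegree close to $n/3$ or $2n/3$ respectively, matching the $\mathcal{G}^1_n$ profile. At this stage $\mathcal{H}$ is \emph{approximately} semibipartite, but edges violating the pattern $|E\cap A|=1$, $|E\cap B|=2$ could still exist.

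The heart of the proof is upgrading this to an exact bipartition: I would prove (a) no edge of $\mathcal{H}$ has two vertices in $A$, and (b) no edge lies entirely in $B$. Each alleged bad edge $E$, combined with the minimum-degree condition and the density profile above, would give access to a pair $\{u,v\}$ with $d(u,v)\geq 3$ whose common neighborhood contains the three vertices of $E$, so that $\{u,v\} \cup E$ carries a copy of $\mathbb{F}_{3,2}$, contradicting $\mathbb{F}_{3,2}$-freeness. The main obstacle will be exactly this last step: the threshold definition of $A$, $B$ does not by itself exclude such bad edges, so the argument must be iterative, possibly re-defining $A$ and $B$ after moving vertices that are incident to many exceptional edges, and repeatedly using the local $\mathbb{F}_{3,2}$-free condition to propagate structure from the dense semibipartite core to the last few irregular vertices; this is where keeping the error terms small enough (which is why the hypothesis restricts $\gamma \leq 1/125$) becomes delicate.
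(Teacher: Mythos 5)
You should first note that the paper does not prove this statement at all: it is quoted verbatim from F\"uredi--Pikhurko--Simonovits \cite{FPS05} and used as a black box in the proof of Lemma \ref{main-lemma-Ht-semibipartite}, so there is no in-paper argument to compare against, and any proof would have to reproduce a substantial external theorem. Judged on its own terms, your proposal is an outline rather than a proof, and it has two genuine gaps. First, the ``approximate structure'' stage is asserted, not derived: from the independence of the sets $N(u,v)$, the bound $|N(u,v)|\le ((5/9)^{1/3}+o(1))n$, and the codegree average (which, incidentally, is $\sum_{\{u,v\}}d(u,v)=3|\mathcal{H}|\approx 2n^3/9$, not $4n^3/9$) one does not get ``sharp two-sided control on codegrees'' --- in the extremal configuration $\mathcal{G}^1_n$ the codegrees take the three very different values $0$, about $n/3$, and about $2n/3$. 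Extracting the partition $A\cup B$ with $|A|\approx n/3$ and the stated codegree profile is essentially the stability theorem for $\mathbb{F}_{3,2}$, which is itself a nontrivial result of \cite{FPS05}; your threshold definition of $A$ plus ``convexity and counting'' does not supply it.

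Second, and more seriously, the mechanism you propose for the exactness step fails for one of the two types of bad edges. For an edge $E$ lying inside $B$ the plan is plausible: a cross pair $u\in A$, $v\in B$ typically has $N(u,v)$ almost all of $B$, so $E\subseteq N(u,v)$ can be arranged and $\{u,v\}\cup E$ carries an $\mathbb{F}_{3,2}$. But for a bad edge with two vertices in $A$, say $x,y\in A$ and $z\in B$, a pair $\{u,v\}$ with $\{x,y,z\}\subseteq N(u,v)$ would need its common neighborhood to meet both $A$ and $B$ substantially, and in the near-extremal profile no pair does this: pairs inside $B$ have common neighborhood essentially contained in $A$, cross pairs essentially in $B$, and pairs inside $A$ have tiny codegree. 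Nor can such a pair be produced from the minimum-degree hypothesis alone: three link graphs each of edge density about $4/9$ need not share a common pair, since $3\cdot 4/9<2$. So the claimed contradiction ``bad edge $\Rightarrow$ copy of $\mathbb{F}_{3,2}$'' is unsubstantiated exactly where it is needed, and your closing paragraph concedes that the iterative cleanup that would have to replace it is left open. As it stands the proposal does not establish the theorem; if you want to use the result, cite \cite{FPS05} as the paper does.
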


Now we prove Lemma \ref{main-lemma-Ht-semibipartite}.
\begin{proof}[Proof of Lemma \ref{main-lemma-Ht-semibipartite}]
The proof is by backward induction on $i$ and the base case is $i = t$ as $\widetilde{\mathcal{H}}_{t} = \mathcal{H}_{t}$.
Now suppose that $\widetilde{\mathcal{H}}_{i+1}$ is semibipartite with two parts $A^{i+1}$ and $B^{i+1}$ for some $0 \le i \le t-1$,
and every edge in $\widetilde{\mathcal{H}}_{i+1}$ has exactly one vertex in $A^{i+1}$.
We may assume that both $A^{i+1}$ and $B^{i+1}$ are union of some equivalence classes.
Our goal is to show that $\widetilde{\mathcal{H}}_{i}$ is also semibipartite.

Recall that $\epsilon > 0$ is a sufficiently small constant and $\tilde{n}$ is a sufficiently large integer.

Denote by $\widehat{\mathcal{G}}$ the semibipartite $3$-graph on $W$
that consists of all triples that have exactly one vertex in $A^{i+1}$.
Notice that $\widetilde{\mathcal{H}}_{i+1} \subset \widehat{\mathcal{G}}$ and
$L_{\widetilde{\mathcal{H}}_{i+1}}(w) \subset L_{\widehat{\mathcal{G}}}(w)$ for all $w\in W$.

\begin{claim}\label{claim-size-Ai+1-Bi+1}
We have $\left| |A^{i+1}| - {\tilde{n}}/{3} \right| < 4 \epsilon^{1/4} \tilde{n}$ and
$\left||B^{i+1}| - {2\tilde{n}}/{3} \right| < 4 \epsilon^{1/4} \tilde{n}$.
\end{claim}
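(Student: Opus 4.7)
The plan is to combine two bounds on $|\widetilde{\mathcal{H}}_{i+1}|$ and then exploit the fact that the resulting one-variable optimization has a unique, nondegenerate maximum. First, since $\widetilde{\mathcal{H}}_{i+1}$ is semibipartite with parts $A^{i+1}$ and $B^{i+1}$, every edge contains exactly one vertex of $A^{i+1}$ and two of $B^{i+1}$, giving the trivial upper bound $|\widetilde{\mathcal{H}}_{i+1}| \le |A^{i+1}|\binom{|B^{i+1}|}{2}$. On the other hand, Lemma~\ref{lemma-low-bound-tilde-Hi} supplies the lower bound $|\widetilde{\mathcal{H}}_{i+1}| > (4/9 - 10\epsilon^{1/2})\binom{\tilde{n}}{3}$. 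Writing $a = |A^{i+1}|$ and $b = \tilde{n} - a = |B^{i+1}|$, and noting that the difference between $ab(b-1)$ and $ab^{2}$ is only $O(\tilde{n}^{2})$, which is negligible compared to $\epsilon^{1/2}\tilde{n}^{3}$ for large $\tilde{n}$, these two inequalities combine to yield
\[
ab^{2} \;>\; \frac{4\tilde{n}^{3}}{27} \;-\; \frac{10\,\epsilon^{1/2}}{3}\,\tilde{n}^{3} \;-\; O(\tilde{n}^{2}).
\]

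Next, the key observation is that $f(a) := a(\tilde{n}-a)^{2}$ attains its unique global maximum $4\tilde{n}^{3}/27$ at $a = \tilde{n}/3$. Substituting $a = \tilde{n}/3 + x$, a short algebraic expansion gives the exact identity $a(\tilde{n}-a)^{2} = \frac{4}{27}\tilde{n}^{3} - \tilde{n} x^{2} + x^{3}$. Comparing with the displayed bound above yields $\tilde{n} x^{2} - x^{3} < (10/3 + o(1))\,\epsilon^{1/2}\,\tilde{n}^{3}$. I would then do a short case split on the sign of $x$: when $x \ge 0$ one has $x \le 2\tilde{n}/3$, so $\tilde{n} x^{2} - x^{3} = x^{2}(\tilde{n}-x) \ge (\tilde{n}/3)x^{2}$; when $x < 0$ the cubic contribution is nonpositive and can be dropped, so $\tilde{n} x^{2} - x^{3} \ge \tilde{n} x^{2}$. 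In either regime $x^{2} \le 10\,\epsilon^{1/2}\,\tilde{n}^{2}$, which gives $|x| < 4 \epsilon^{1/4}\tilde{n}$ once $\epsilon$ is sufficiently small. The matching bound for $B^{i+1}$ is then immediate from $|A^{i+1}| + |B^{i+1}| = \tilde{n}$.

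I do not anticipate any serious obstacle: the statement is a routine stability estimate for the nondegenerate critical point of a one-variable polynomial. The only technical care required is to ensure that the cubic correction $x^{3}$ does not spoil the quadratic lower bound for $\tilde{n} x^{2} - x^{3}$, which is precisely why the sign split is needed; everything else is elementary.
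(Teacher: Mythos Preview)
Your proposal is correct and follows essentially the same approach as the paper: combine the semibipartite upper bound $|\widetilde{\mathcal{H}}_{i+1}| \le |A^{i+1}|\binom{|B^{i+1}|}{2}$ with the lower bound from Lemma~\ref{lemma-low-bound-tilde-Hi}, then solve the resulting one-variable inequality. The paper's proof simply asserts that $(4/9-10\epsilon^{1/2})\binom{\tilde{n}}{3} \le (\tilde{n}-\beta)\binom{\beta}{2}$ ``implies'' the desired bounds on $\beta=|B^{i+1}|$, whereas you have made the implicit algebra explicit via the substitution $a=\tilde{n}/3+x$ and the identity $a(\tilde{n}-a)^2 = 4\tilde{n}^3/27 - \tilde{n}x^2 + x^3$; this is just a more detailed version of the same argument.
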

\begin{proof}[Proof of Claim \ref{claim-size-Ai+1-Bi+1}]
Let $\beta = |B^{i+1}|$.
Since $\widetilde{\mathcal{H}}_{i+1}$ is semibipartite,
\[
|\widetilde{\mathcal{H}}_{i+1}| \le \left( \tilde{n} - \beta \right)\binom{\beta}{2}.
\]
On the other hand, by Lemma \ref{lemma-low-bound-tilde-Hi},
$|\widetilde{\mathcal{H}}_{i+1}| \ge (4/9- 10 \epsilon^{1/2})\binom{\tilde{n}}{3}$.
Therefore,
\[
(4/9- 10 \epsilon^{1/2})\binom{\tilde{n}}{3} \le  \left( \tilde{n} - \beta \right)\binom{\beta}{2},
\]
which implies that $(2/3 - 4 \epsilon^{1/4}) \tilde{n}< \beta < (2/3 + 4 \epsilon^{1/4}) \tilde{n}$.
\end{proof}

For every vertex $w\in W$ let $M_{w} = L_{\widehat{\mathcal{G}}}(w) \setminus L_{\widetilde{\mathcal{H}}_{i+1}}(w)$.
Members in $M_w$ are called missing edges of $L_{\widetilde{\mathcal{H}}_{i+1}}(w)$.

\begin{claim}\label{CLAIM:number-of-missing-edges-semibipartite}
We have $|M_w| \le 10 \epsilon^{1/4}\tilde{n}^{2}$ for all $w\in W$.
\end{claim}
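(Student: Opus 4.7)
The plan is to exploit the inclusion $\widetilde{\mathcal{H}}_{i+1} \subseteq \widehat{\mathcal{G}}$, which implies $L_{\widetilde{\mathcal{H}}_{i+1}}(w) \subseteq L_{\widehat{\mathcal{G}}}(w)$ for every $w \in W$, and hence
\[
|M_w| = |L_{\widehat{\mathcal{G}}}(w)| - d_{\widetilde{\mathcal{H}}_{i+1}}(w).
\]
So it suffices to upper bound $|L_{\widehat{\mathcal{G}}}(w)|$ and lower bound $d_{\widetilde{\mathcal{H}}_{i+1}}(w)$ to within an $O(\epsilon^{1/4})\tilde{n}^2$ error of $2\tilde{n}^2/9$. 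For the latter, Lemma \ref{lemma-low-bound-tilde-Hi} immediately supplies $d_{\widetilde{\mathcal{H}}_{i+1}}(w) > (4/9 - 10\epsilon^{1/2})\binom{\tilde{n}-1}{2}$, which is $(2/9 - O(\epsilon^{1/2}))\tilde{n}^2$.

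For $|L_{\widehat{\mathcal{G}}}(w)|$ I will split into two cases according to which part of the semibipartition contains $w$. If $w \in A^{i+1}$, then by definition of $\widehat{\mathcal{G}}$ the link $L_{\widehat{\mathcal{G}}}(w)$ is exactly $\binom{B^{i+1}}{2}$, so by Claim \ref{claim-size-Ai+1-Bi+1} it has size at most $\binom{(2/3+4\epsilon^{1/4})\tilde{n}}{2} = (2/9+O(\epsilon^{1/4}))\tilde{n}^2$. If $w \in B^{i+1}$, then the triples of $\widehat{\mathcal{G}}$ containing $w$ are precisely those with one further vertex in $A^{i+1}$ and one in $B^{i+1}\setminus\{w\}$, giving $|L_{\widehat{\mathcal{G}}}(w)| = |A^{i+1}|(|B^{i+1}|-1)$. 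Setting $\beta = |B^{i+1}|$, the function $\beta \mapsto (\tilde{n}-\beta)(\beta-1)$ has its maximum at $\beta = (\tilde{n}+1)/2$ and is strictly decreasing on $[(\tilde{n}+1)/2,\tilde{n}]$; since Claim \ref{claim-size-Ai+1-Bi+1} forces $\beta$ into an interval lying above $\tilde{n}/2$, the maximum over the allowed range is attained at the left endpoint $\beta = (2/3-4\epsilon^{1/4})\tilde{n}$, yielding again an upper bound of $(2/9+O(\epsilon^{1/4}))\tilde{n}^2$.

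Subtracting in either case, the $\epsilon^{1/4}$ contribution dominates the $\epsilon^{1/2}$ contribution for $\epsilon$ sufficiently small, so $|M_w| \le 10\epsilon^{1/4}\tilde{n}^{2}$ follows after tracking constants. There is no real obstacle; the only point meriting care is the monotonicity observation for the $w\in B^{i+1}$ case, which ensures that the worst case of $(\tilde{n}-\beta)(\beta-1)$ occurs at the extreme of Claim \ref{claim-size-Ai+1-Bi+1}'s interval closest to $\tilde{n}/2$, not at its midpoint or opposite endpoint.
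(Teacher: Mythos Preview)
Your proposal is correct and follows essentially the same approach as the paper: both split into the cases $w\in A^{i+1}$ and $w\in B^{i+1}$, upper bound $|L_{\widehat{\mathcal{G}}}(w)|$ via Claim~\ref{claim-size-Ai+1-Bi+1}, and subtract the degree lower bound from Lemma~\ref{lemma-low-bound-tilde-Hi}. The only minor difference is that for $w\in B^{i+1}$ the paper bypasses your monotonicity observation entirely and simply bounds $|A^{i+1}|\cdot|B^{i+1}| \le (\tilde{n}/3+4\epsilon^{1/4}\tilde{n})(2\tilde{n}/3+4\epsilon^{1/4}\tilde{n})$ using the individual upper bounds on each part; this cruder product bound is slightly looser than yours but still comfortably within $10\epsilon^{1/4}\tilde{n}^2$.
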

\begin{proof}[Proof of Claim~\ref{CLAIM:number-of-missing-edges-semibipartite}]
If $w\in A^{i+1}$, then $L_{\widehat{\mathcal{G}}}(w)$ is a complete graph on $B^{i+1}$.
If $w\in B^{i+1}$, then $L_{\widehat{\mathcal{G}}}(w)$ is a complete bipartite graph with two parts $A^{i+1}$ and $B^{i+1}$.
Claim~\ref{claim-size-Ai+1-Bi+1} and Lemma~\ref{lemma-low-bound-tilde-Hi} imply that
for every $w\in A^{i+1}$ we have
\begin{align}
|M_w|
\le \binom{2\tilde{n}/3+4\epsilon^{1/4}\tilde{n}}{2} - (4/9-10\epsilon^{1/2})\binom{\tilde{n}}{2}
\le 10 \epsilon^{1/4}\tilde{n}^{2},  \notag
\end{align}
and for every $w\in B^{i+1}$ we have
\begin{align}
|M_w|
\le (\tilde{n}/3+4\epsilon^{1/4}\tilde{n})(2\tilde{n}/3+4\epsilon^{1/4}\tilde{n}) - (4/9-10\epsilon^{1/2})\binom{\tilde{n}}{2}
\le 10 \epsilon^{1/4}\tilde{n}^{2}.  \notag
\end{align}
\end{proof}

\begin{figure}[htbp]
\centering
\begin{tikzpicture}[xscale=4,yscale=4]
    \node (a) at (-0.4,0.2) {};
    \node (b) at (0.4,0.1) {};
    \node (b') at (0.4,0.3) {};
    \node (v) at (0,-0.6) {};
    \node (c1) at (0,0) {};
    \node (c2) at (0,-0.1) {};
    \node (c3) at (0,-0.2) {};
    \fill (a) circle (0.015) node [right] {$a$};
    \fill (b) circle (0.015) node [left] {$b$};
    \fill (b') circle (0.015) node [left] {$b'$};
    \fill (v) circle (0.015) node [above] {$v$};
    \fill (c1) circle (0.012);
    \fill (c2) circle (0.012);
    \fill (c3) circle (0.012);
    \draw[rotate=0,line width=0.8pt] (-0.4,0) ellipse [x radius=0.15, y radius=0.3];
    \draw[rotate=0,line width=0.8pt] (0.4,0) ellipse [x radius=0.2, y radius=0.6];
    \draw[line width=0.8pt] (0,-0.6) circle [radius=0.1];
    \draw[line width=0.8pt,color=sqsqsq,fill=sqsqsq,fill opacity=0.15]
        (-0.4-0.05,0.2) to [out = 90, in = 180] (-0.4,0.2+0.05) to [out = 0, in = 180]
        (0.4,0.3+0.05) to [out = 0, in = 0] (0.4,0.1-0.05) to [out = 180, in = 0] (-0.4,0.2-0.05)
        to [out = 180, in = 270] (-0.4-0.05,0.2);
    \draw[line width=0.8pt,color=sqsqsq,fill=sqsqsq,fill opacity=0.15]
        (0,-0.6-0.05) to [out = 0, in = 270] (0+0.05, -0.6)
        to [out = 90, in = 180] (0.4, -0.5-0.05)
        to [out = 0, in = 270] (0.4+0.05,-0.5) to [out = 90, in = 0] (0.4, -0.5+0.05)
        to [out = 180, in = 0] (-0.4, -0.2 +0.05) to [out = 180, in = 90]
        (-0.4-0.05,-0.2) to [out = 270, in = 180] (-0.4, -0.2-0.05) to [out = 0, in = 90] (0-0.05, -0.6)
        to [out = 270, in = 180] (0,-0.6-0.05);
    \node at (-0.4,-0.3-0.07) {$L$};
    \node at (0.4,-0.6-0.07) {$R$};
    \node at (0,-0.7-0.07) {$C_v$};
\end{tikzpicture}
\caption{The $3$-graph $\widetilde{\mathcal{H}}_{i+1}$ is obtained from $\widetilde{\mathcal{H}}_{i}$
by symmetrizing $C_v$ to some equivalence class $C_u$ that is contained in $L$ or $R$.}
\label{figure:Hi-semibipartite}
\end{figure}
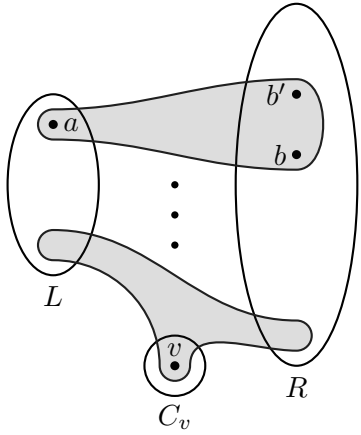

Lemma \ref{lemma-Hi+1-and-Hi} implies that either $\widetilde{\mathcal{H}}_{i} = \widetilde{\mathcal{H}}_{i+1}$
or there exists two equivalence classes $C_v$ and $C_u$ in $\widetilde{\mathcal{H}}_{i}$ such that $\widetilde{\mathcal{H}}_{i+1}$
is obtained from $\widetilde{\mathcal{H}}_{i}$ by symmetrizing $C_v$ to $C_u$ (see Figure~\ref{figure:Hi-semibipartite}).
In the former case, there is nothing to prove, so we may assume that we are in the latter case.

Let $L = A^{i+1} \setminus C_v$, $R = B^{i+1} \setminus C_v$ and $W' = W \setminus C_v$.
Since $\widetilde{\mathcal{H}}_{i+1}$ is obtained from $\widetilde{\mathcal{H}}_{i}$ by symmetrizing $C_v$ to
$C_u$ that is contained in either $L$ or $R$,
it follows that
\begin{align}\label{equ:Cv-semipartite}
{\rm either}\quad \left(L\cup C_v, R\right) = \left(A^{i+1}, B^{i+1}\right)
\quad{\rm or}\quad
\left(L, R\cup C_v\right) = \left(A^{i+1}, B^{i+1}\right),
\end{align}
and in particular, $L \neq \emptyset$ and $R \neq \emptyset$.

Since $C_v$ is an equivalence class in $\widetilde{\mathcal{H}}_{i}$,
$L_{\widetilde{\mathcal{H}}_{i}}(v') = L_{\widetilde{\mathcal{H}}_{i}}(v)$ for all $v' \in C_v$.
Thus we may just focus on $v$.
Notice that in forming $\widetilde{\mathcal{H}}_{i+1}$ from $\widetilde{\mathcal{H}}_{i}$ we only delete and add edges that
have nonempty intersection with $C_v$,
so $\widetilde{\mathcal{H}}_{i}[W'] = \widetilde{\mathcal{H}}_{i+1}[W']$.
Since $\widetilde{\mathcal{H}}_{i+1}$ is semibipartite,
it follows that $\widetilde{\mathcal{H}}_{i}[W']$ is semibipartite with two parts $L$ and $R$.

\begin{claim}\label{claim-low-bound-N(v)-cap-R}
We have $|N_{\widetilde{\mathcal{H}}_{i}}(v) \cap R| \ge \left(  1/3 - 5 \epsilon^{1/4}\right) \tilde{n}$.
In particular, $|R| \ge \left(  1/3 - 5 \epsilon^{1/4}\right) \tilde{n}$.
\end{claim}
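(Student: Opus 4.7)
The plan is to derive the bound from the high degree of $v$ established in Lemma~\ref{lemma-low-bound-tilde-Hi}, combined with the fact that $L$ is contained in the ``small'' side $A^{i+1}$ of $\widetilde{\mathcal{H}}_{i+1}$, whose size is controlled by Claim~\ref{claim-size-Ai+1-Bi+1}. The structural input is a single observation: since $C_v$ is an equivalence class of $\widetilde{\mathcal{H}}_i$, every $v'\in C_v\setminus\{v\}$ is non-adjacent to $v$ by Definition~\ref{dfn-equivalent-class}, so $N_{\widetilde{\mathcal{H}}_i}(v)\cap C_v=\emptyset$ and hence $N_{\widetilde{\mathcal{H}}_i}(v)\subseteq L\cup R$.

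First I would relate the link of $v$ to its neighborhood: every edge of the link graph $L_{\widetilde{\mathcal{H}}_i}(v)$ is a pair inside $N_{\widetilde{\mathcal{H}}_i}(v)$, so $d_{\widetilde{\mathcal{H}}_i}(v)\le\binom{|N_{\widetilde{\mathcal{H}}_i}(v)|}{2}$. Plugging in the lower bound $d_{\widetilde{\mathcal{H}}_i}(v)>(4/9-10\epsilon^{1/2})\binom{\tilde n-1}{2}$ from Lemma~\ref{lemma-low-bound-tilde-Hi} and taking a square root gives
\[
|N_{\widetilde{\mathcal{H}}_i}(v)|\ge\bigl(2/3-\epsilon^{1/4}\bigr)\tilde n
\]
for $\epsilon$ small and $\tilde n$ large.

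Next, by (\ref{equ:Cv-semipartite}) either $L=A^{i+1}\setminus C_v$ or $L=A^{i+1}$, so in either case $L\subseteq A^{i+1}$, and Claim~\ref{claim-size-Ai+1-Bi+1} yields $|L|\le|A^{i+1}|<(1/3+4\epsilon^{1/4})\tilde n$. Combining this with the previous display and $N_{\widetilde{\mathcal{H}}_i}(v)\subseteq L\cup R$,
\[
|N_{\widetilde{\mathcal{H}}_i}(v)\cap R|\ge |N_{\widetilde{\mathcal{H}}_i}(v)|-|L|\ge\bigl(1/3-5\epsilon^{1/4}\bigr)\tilde n,
\]
and the ``in particular'' assertion follows from $N_{\widetilde{\mathcal{H}}_i}(v)\cap R\subseteq R$. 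The only mild obstacle is bookkeeping: one needs to check that the error terms $10\epsilon^{1/2}$ from Lemma~\ref{lemma-low-bound-tilde-Hi} and $4\epsilon^{1/4}$ from Claim~\ref{claim-size-Ai+1-Bi+1} compose to give the claimed slack of $5\epsilon^{1/4}$. No $\mathcal{M}$-freeness argument is needed at this step beyond what Lemma~\ref{lemma-low-bound-tilde-Hi} has already provided, because the conclusion is purely about how much of the large set $N_{\widetilde{\mathcal{H}}_i}(v)$ can fit into the small side $L$.
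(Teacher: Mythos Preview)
Your proof is correct and follows essentially the same route as the paper: bound $|N_{\widetilde{\mathcal{H}}_i}(v)|$ from below via $d_{\widetilde{\mathcal{H}}_i}(v)\le\binom{|N_{\widetilde{\mathcal{H}}_i}(v)|}{2}$ and Lemma~\ref{lemma-low-bound-tilde-Hi}, then subtract the bound $|L|\le |A^{i+1}|<(1/3+4\epsilon^{1/4})\tilde n$ from Claim~\ref{claim-size-Ai+1-Bi+1}. You are in fact a bit more explicit than the paper in justifying why $N_{\widetilde{\mathcal{H}}_i}(v)\subseteq L\cup R$ (via $C_v$ being an equivalence class), which the paper uses tacitly when it writes $|N_{\widetilde{\mathcal{H}}_i}(v)\cap R|\ge |N_{\widetilde{\mathcal{H}}_i}(v)|-|L|$.
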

\begin{proof}[Proof of Claim \ref{claim-low-bound-N(v)-cap-R}]
By Lemma \ref{lemma-low-bound-tilde-Hi},
\[
\binom{| N_{\widetilde{\mathcal{H}}_{i}}(v) |}{2}
\ge d_{\widetilde{\mathcal{H}}_{i}}(v)
\ge \left( {4}/{9} - 10 \epsilon^{1/2} \right)\binom{\tilde{n}-1}{2},
\]
which implies that $| N_{\widetilde{\mathcal{H}}_{i}}(v)| \ge (2/3 - 15\epsilon^{1/2})\tilde{n}$.
By Claim \ref{claim-size-Ai+1-Bi+1}, $|L| \le (1/3 + 4 \epsilon^{1/4})\tilde{n}$,
and hence
\[
| N_{\widetilde{\mathcal{H}}_{i}}(v) \cap R|
\ge  \left( {2}/{3} - 15 \epsilon^{1/2} \right) \tilde{n} - \left({1}/{3} + 4 \epsilon^{1/4}\right)\tilde{n}
> \left({1}/{3} - 5 \epsilon^{1/4}\right)\tilde{n}.
\]
\end{proof}

\begin{claim}\label{CLAIM:w-has-many-neighbors-in-R}
For every vertex $w\in W'$ we have $|N_{\widetilde{\mathcal{H}}_{i}} \cap R| \ge |R| - \tilde{n}/100$.
\end{claim}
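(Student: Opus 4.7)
My plan is to bound $|I|:=|R\setminus N_{\widetilde{\mathcal{H}}_i}(w)|$ by a missing-edge count in the link of $w$. Write $M_w=L_{\widehat{\mathcal{G}}}(w)\setminus L_{\widetilde{\mathcal{H}}_{i+1}}(w)$; Claim~\ref{CLAIM:number-of-missing-edges-semibipartite} gives $|M_w|\le 10\epsilon^{1/4}\tilde{n}^2$, and the idea is to show that each $x\in I$ forces many new pairs into $M_w$. Two observations drive everything: (i) since $\widetilde{\mathcal{H}}_i[W']=\widetilde{\mathcal{H}}_{i+1}[W']$, a triple lying entirely inside $W'$ is in $\widetilde{\mathcal{H}}_i$ iff it is in $\widetilde{\mathcal{H}}_{i+1}$; (ii) every $y\in C_v$ satisfies $L_{\widetilde{\mathcal{H}}_{i+1}}(y)=L_{\widetilde{\mathcal{H}}_{i+1}}(v)=L_{\widetilde{\mathcal{H}}_i}(u)$, because the symmetrization at step $i+1$ rebuilds the edges through $C_v$ from exactly the link of $u$ while leaving edges through $u$ itself untouched (so $u$'s class and $v$'s class are merged in $\widetilde{\mathcal{H}}_{i+1}$ and inherit the link $L_{\widetilde{\mathcal{H}}_i}(u)$).

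If $w\in L$, then for any two distinct $x,x'\in I\subseteq R$ the triple $\{w,x,x'\}$ is a legal semibipartite triple, so $\{x,x'\}\in L_{\widehat{\mathcal{G}}}(w)$; since $x\in I$ we have $\{w,x,x'\}\notin\widetilde{\mathcal{H}}_i$, and (i) then gives $\{x,x'\}\in M_w$. Thus $\binom{|I|}{2}\le |M_w|\le 10\epsilon^{1/4}\tilde{n}^2$, which forces $|I|=O(\epsilon^{1/8}\tilde{n})<\tilde{n}/100$ for $\epsilon$ a sufficiently small absolute constant.

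If $w\in R$, I use pairs $\{x,y\}$ with $x\in I$ and $y\in A^{i+1}$; each such triple $\{w,x,y\}$ is semibipartite-legal. When $y\in L$, observation (i) again gives $\{x,y\}\in M_w$. The subtle sub-case is $y\in C_v$ (only possible if $C_v\subseteq A^{i+1}$): by (ii), $\{x,y\}\in L_{\widetilde{\mathcal{H}}_{i+1}}(w)$ iff $\{w,x\}\in L_{\widetilde{\mathcal{H}}_i}(u)$, and the latter would produce the edge $\{u,w,x\}\in\widetilde{\mathcal{H}}_i$, contradicting $x\in I$. Hence every $y\in A^{i+1}$ contributes a distinct pair to $M_w$, giving $|I|\cdot |A^{i+1}|\le |M_w|\le 10\epsilon^{1/4}\tilde{n}^2$; combined with $|A^{i+1}|\ge \tilde{n}/4$ from Claim~\ref{claim-size-Ai+1-Bi+1}, this yields $|I|\le 40\epsilon^{1/4}\tilde{n}<\tilde{n}/100$.

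The one point where I expect real trouble is the sub-case $y\in C_v$ inside the case $w\in R$: a naive count using only $y\in W'$ would give $|I|\cdot|L|\le |M_w|$, which is useless when $|L|$ is small (as can happen when $C_v$ absorbs almost all of $A^{i+1}$). The identity $L_{\widetilde{\mathcal{H}}_{i+1}}(v)=L_{\widetilde{\mathcal{H}}_i}(u)$ is what lets the $|C_v|$-mass of $A^{i+1}$ rescue the bound uniformly in the size of $C_v$, and I expect this to be the key technical maneuver of the proof.
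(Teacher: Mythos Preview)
Your argument is correct (up to a harmless off-by-one: when $w\in R$ you have $w\in I$ by definition of neighborhood, so you should sum over $x\in I\setminus\{w\}$, giving $(|I|-1)|A^{i+1}|\le|M_w|$; this changes nothing). The two key observations are sound: (i) is immediate from $\widetilde{\mathcal{H}}_i[W']=\widetilde{\mathcal{H}}_{i+1}[W']$, and (ii) follows because symmetrization makes each $y\in C_v$ a duplicate of $u$ while leaving edges through $u$ untouched, so indeed $L_{\widetilde{\mathcal{H}}_{i+1}}(y)=L_{\widetilde{\mathcal{H}}_i}(u)$.

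Your route differs from the paper's in the case $w\in R$. The paper splits on whether $|L|\ge\tilde n/100$: if so, it bounds isolated vertices in $L_{\widetilde{\mathcal{H}}_i}(w)[L,R]$ via $|M_w|/|L|$; if $|L|<\tilde n/100$ it abandons $M_w$ entirely and instead uses that $L_{\widetilde{\mathcal{H}}_i}(w)$ is $3$-partite on $L,R,C_v$, combining the degree lower bound $(4/9-10\epsilon^{1/2})\binom{\tilde n-1}{2}$ with the upper bound $|L||C_v|+|R|(|L|+|C_v|)$ to squeeze out $x\le\tilde n/100$. Your observation (ii) lets you fold the $C_v$-mass back into the missing-edge count for $M_w$ itself, so the single inequality $|I|\cdot|A^{i+1}|\le|M_w|$ works uniformly and the case split disappears. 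This is genuinely cleaner than the paper's treatment. For $w\in L$ your bound $\binom{|I|}{2}\le|M_w|$ is weaker than the paper's $|I|\cdot|R|/2\lesssim|M_w|$ (you get $|I|=O(\epsilon^{1/8}\tilde n)$ rather than $O(\epsilon^{1/4}\tilde n)$), but either suffices for $\tilde n/100$.
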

\begin{proof}[Proof of Claim~\ref{CLAIM:w-has-many-neighbors-in-R}]
Notice that $L_{\widetilde{\mathcal{H}}_{i+1}}(w)[W'] = L_{\widetilde{\mathcal{H}}_{i}}(w)[W']$ for every $w\in W'$.
Therefore, for every $w\in L$ we have
$|L_{\widehat{\mathcal{G}}}(w)[R] \setminus L_{\widetilde{\mathcal{H}}_{i}}(w)[R]|
\le |M_{w}| \le 10\epsilon^{1/4}\tilde{n}^2$.
By Claim~\ref{claim-low-bound-N(v)-cap-R}, $|R| \ge \left(1/3 - 5 \epsilon^{1/4}\right) \tilde{n}$.
So the number of vertices in $R$ that have degree $0$ in $L_{\widetilde{\mathcal{H}}_{i}}(w)[R]$ is at most
$2\times 10\epsilon^{1/4}\tilde{n}^2/|R| < 80 \epsilon^{1/4} \tilde{n} < \tilde{n}/100$.

Now fix $u\in R$.
If $|L| \ge \tilde{n}/100$, then a
similar argument as above applied to graphs $L_{\widehat{\mathcal{G}}}(u)[L,R]$ and $L_{\widetilde{\mathcal{H}}_{i}}(u)[L,R]$
yields the number of vertices in $R$ that have degree $0$ in $L_{\widetilde{\mathcal{H}}_{i}}(u)[L,R]$ is at most
$2\times 10\epsilon^{1/4}\tilde{n}^2/|L| \le 2000 \epsilon^{1/4} \tilde{n} < \tilde{n}/100$.

So we may assume that $|L| < \tilde{n}/100$.
Due to Claim~\ref{claim-size-Ai+1-Bi+1} and $(\ref{equ:Cv-semipartite})$,
we must have $C_v \cup L = A^{i+1}$
since otherwise we would have
$|L| = |A^{i+1}| \ge (1/6-4\epsilon^{1/4})\tilde{n} > \tilde{n}/100$, a contradiction.
In particular, $|C_v| \le |A^{i+1}| \le (1/3+4\epsilon^{1/4})\tilde{n}$ and $|R| = |B^{i+1}|$.
Notice that $L_{\widetilde{\mathcal{H}}_{i}}(u)$ is a $3$-partite graph with three parts $L$, $R$, and $C_v$
(note that $C_v$ is an equivalence class, so no pair in $C_v$ is covered).
Let $x$ denote the number of vertices in $R$ that have degree $0$ in $L_{\widetilde{\mathcal{H}}_{i}}(u)$,
and note that for a vertex $u'\in R$ with degree $0$ in $L_{\widetilde{\mathcal{H}}_{i}}(u)$
every vertex $u''\in L\cup C_v$ forms a pair $\{u',u''\}$ that is not contained in $L_{\widetilde{\mathcal{H}}_{i}}(u)$.
Then due to $d_{\widetilde{\mathcal{H}}_{i}}(u) \ge (4/9-10\epsilon^{1/2})\binom{\tilde{n}-1}{2}$,
we have
\begin{align}
(4/9-10\epsilon^{1/2})\binom{\tilde{n}-1}{2} +  x(|L|+|C_v|)
\le d_{\widetilde{\mathcal{H}}_{i}}(u) + x(|L|+|C_v|)
\le |L||C_v|+|R|(|L|+|C_v|), \notag
\end{align}
which implies that $x\le \tilde{n}/100$.
\end{proof}

We may assume that $\widetilde{\mathcal{H}}_i$ contains a copy of $\mathbb{F}_{3,2}$,
since otherwise by Theorem~\ref{THM:FPS-independent-neighbor} we are done.
Let $S \subset W$ be a set of size $5$ such that $\mathbb{F}_{3,2} \subset \widetilde{\mathcal{H}}_i[S]$.
Observe that $S \cap C_v \neq \emptyset$ and due to the fact that $\mathbb{F}_{3,2}$ is $2$-covered,
we actually have $|S\cap C_v| = 1$.
We may assume that $\{v\} = S\cap C_v$.
Let $\{w_1,w_2,w_3,w_4\} = S\setminus \{v\}$.
Define $R' = R\cap N_{\widetilde{\mathcal{H}}_i}(v)\cap \left(\bigcap_{j\in[4]}N_{\widetilde{\mathcal{H}}_i}(w_j)\right)$.
Then Claims~\ref{claim-low-bound-N(v)-cap-R} and~\ref{CLAIM:w-has-many-neighbors-in-R}
imply that
$|R'| \ge \left(  1/3 - 5 \epsilon^{1/4}\right) \tilde{n} - 4 \times \tilde{n}/100 > \tilde{n}/6$.
Fix a vertex $u\in L$ (it is possible that $u\in \{w_1,w_2,w_3,w_4\}$).
By Claim~\ref{CLAIM:number-of-missing-edges-semibipartite},
$|L_{\widehat{\mathcal{G}}}(u)[R'] \setminus L_{\widetilde{\mathcal{H}}_{i}}(u)[R']|
\le |M_{u}| \le 10\epsilon^{1/4}\tilde{n}^2$.
So there exists an edge $w_5w_6 \in L_{\widetilde{\mathcal{H}}_{i}}(u)[R']$.
Let $E \subset \widetilde{\mathcal{H}}_{i}$ be a set of edges of size at most $10$
that covers all pairs in $\{v,w_1,w_2,w_3,w_4\} \times \{w_5,w_6\}$, and let
$F = \widetilde{\mathcal{H}}_{i}[\{v,w_1,w_2,w_3,w_4\}] \cup \{uw_5w_6\} \cup E$.
Then it is easy to see that $F$ is a member in $M_2$
(since $\mathbb{F}_{3,2}\subset \widetilde{\mathcal{H}}_{i}[\{v,w_1,w_2,w_3,w_4\}]$  has
transversal number at least two), a contradiction.
\end{proof}

\subsection{$\mathcal{G}^{2}_{6}$-colorable}
In this section we will prove the following statement.

\begin{lemma}\label{main-lemma-Ht-G2-color}
Suppose that $\mathcal{H}_{t}$ is $\mathcal{G}^2_6$-colorable.
Then $\widetilde{\mathcal{H}}_{i}$ is $\mathcal{G}^2_6$-colorable for all $0\le i \le t$.
In particular, $\mathcal{H}[W] = \widetilde{\mathcal{H}}_{0}$ is $\mathcal{G}^2_6$-colorable.
\end{lemma}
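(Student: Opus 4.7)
The proof proceeds by backward induction on $i$, paralleling Lemma~\ref{main-lemma-Ht-semibipartite}. The base case $i = t$ is given. For the inductive step, fix a $\mathcal{G}^2_6$-coloring $\chi : W \to [6]$ of $\widetilde{\mathcal H}_{i+1}$ with parts $P_1, \ldots, P_6$. Since $\lambda(\mathcal{G}^2_6) = 2/27$ with equality only at $x_1 = \cdots = x_6 = 1/6$ (see the proof of Lemma~\ref{lemma-lagrang-G26}) and $|\widetilde{\mathcal H}_{i+1}| \ge (4/9 - 10\epsilon^{1/2}) \binom{\tilde n}{3}$ by Lemma~\ref{lemma-low-bound-tilde-Hi}, a standard stability computation for the Lagrangian gives $\bigl| |P_j| - \tilde n / 6 \bigr| \le O(\epsilon^{1/4}) \tilde n$ for every $j \in [6]$. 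An analog of Claim~\ref{CLAIM:number-of-missing-edges-semibipartite} then shows that for every vertex $w \in W$, the set $M_w$ of pairs that lie in the link of $w$ inside the full $\mathcal{G}^2_6$-blowup on $(P_j)_{j \in [6]}$ but are absent from $L_{\widetilde{\mathcal H}_{i+1}}(w)$ has size at most $O(\epsilon^{1/4}) \tilde n^2$.

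Next, I apply Lemma~\ref{lemma-Hi+1-and-Hi}: either $\widetilde{\mathcal H}_i = \widetilde{\mathcal H}_{i+1}$, in which case we are done, or $\widetilde{\mathcal H}_{i+1}$ is obtained from $\widetilde{\mathcal H}_i$ by deleting an equivalence class $C_v$ and duplicating a class $C_u \subset P_k$. I attempt to extend $\chi$ to $v$ by choosing some color $k' \in [6]$ for $v$ (and hence for $C_v$). If any color works, $\widetilde{\mathcal H}_i$ is $\mathcal{G}^2_6$-colorable and we are done. Otherwise, for every $k' \in [6]$ there exists a bad edge $v w_1^{k'} w_2^{k'} \in \widetilde{\mathcal H}_i$ whose color triple $\{k', \chi(w_1^{k'}), \chi(w_2^{k'})\}$ is a non-edge of $\mathcal{G}^2_6$, i.e., lies in $\{\{1,2,3\}, \{1,2,6\}, \{3,4,5\}, \{4,5,6\}\}$.

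From these six bad edges I construct a forbidden configuration. The pairs $\{w_1^{k'}, w_2^{k'}\}$ involve at most $12$ non-$v$ vertices; using Lemma~\ref{lemma-low-bound-tilde-Hi} together with the missing-edge estimate, I can perturb the witnesses within their parts to arrange that some $6$-element set $S \ni v$ spans the six bad edges and at least one further edge of $\widetilde{\mathcal H}_i$ avoiding $v$. Adding a bounded number of further edges from $\widetilde{\mathcal H}_i$ to $2$-cover $S$ produces $F \in \mathcal K_6^3$ with core $S$. Since the six bad edges obstruct every color assignment to $v$, the core $F$ is not $\mathcal{G}^2_6$-colorable, so $F \not\subset \mathcal{G}^2_n$ for any $n$; since $F$ contains an edge avoiding $v$ and is $2$-covered, the only way $F$ could embed in the star blowup $\mathcal{G}^1_n$ would be as a star, which it is not. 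Hence $F \in M_3 \subset \mathcal M$, contradicting $\mathcal M$-freeness of $\widetilde{\mathcal H}_i$.

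The main obstacle will be the careful selection of witnesses in the final step: depending on which of the two allowed non-edge triples each $k'$ triggers and how the resulting pairs $\{w_1^{k'}, w_2^{k'}\}$ overlap across different $k'$, the naive choice of $S$ may collapse to fewer than $6$ distinct vertices or may fail to support a non-star completion. Resolving this requires a finite case analysis on the pattern of non-edge triples across $k' \in [6]$, where in each case the large minimum degree of $\widetilde{\mathcal H}_i$ and the small size of every $M_w$ furnish enough flexibility to replace a problematic witness with an alternative neighbor of $v$ in the appropriate part.
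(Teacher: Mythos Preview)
Your setup through the part-size and missing-edge estimates is fine and matches the paper. The gap is in the final step, where you claim that the six bad edges force $F\not\subset\mathcal{G}^2_n$. Your argument is: any homomorphism $\psi\colon V(F)\to[6]$ is a bijection on the $2$-covered core $S$, so $\psi(v)=k'$ for some $k'$, and then the bad edge $e_{k'}$ maps to a non-edge. But this only uses that $\{k',\chi(w_1^{k'}),\chi(w_2^{k'})\}$ is a non-edge, whereas what you need is that $\{k',\psi(w_1^{k'}),\psi(w_2^{k'})\}$ is a non-edge, and there is no reason $\psi$ should agree with $\chi$ on $S\setminus\{v\}$. Concretely, take $S\setminus\{v\}=\{w_1,\dots,w_5\}$ with $\chi(w_j)=j$ and bad edges $vw_1w_2$, $vw_1w_3$, $vw_2w_3$, $vw_3w_4$, $vw_3w_5$, $vw_4w_5$ (one for each $k'$). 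Then $\psi(v)=3$, $\psi(w_3)=6$, $\psi(w_1)=1$, $\psi(w_2)=4$, $\psi(w_4)=2$, $\psi(w_5)=5$ sends all six bad edges to edges of $\mathcal{G}^2_6$. So the bad edges alone cannot certify $F\in M_3$, and the ``bounded number of further edges'' you add to $2$-cover $S$ are not controlled enough to rule out such a recoloring. A second omission is that you list only the four rainbow non-edges of $\mathcal{G}^2_6$; the obstruction for color $k'$ could also be an edge $vw_1w_2$ with $\chi(w_1)=\chi(w_2)$, which your case analysis never addresses.

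The paper avoids both problems by proceeding in stages that use $M_2$ (seven-vertex cores), not only $M_3$. First it shows $L(v)[U_j]=\emptyset$ for every part $U_j$ by embedding a copy of $\mathcal{G}^2_6$ on reference vertices $\{w_1,\dots,w_6\}$ and exhibiting an $M_2$-member with core $\{w_1,w_1',w_2,\dots,w_6\}$. Next it shows $N(v)$ misses an entire part (again via $M_2$). Only then does it use $M_3$ to kill the remaining bad edges; crucially, it anchors the argument on a full induced copy $\widetilde{\mathcal H}_i[\{w_1,\dots,w_6\}]\cong\mathcal{G}^2_6$, which forces any homomorphism $\psi$ to satisfy $\psi(w_j)=j$ up to automorphism, and then a degree count in $\mathcal{G}^2_6$ gives the contradiction. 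Your proposal lacks this rigidity device, which is exactly what makes the recoloring loophole above impossible.
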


The following lemma, which will be used in the proof of Lemma~\ref{main-lemma-Ht-G2-color},
can be easily proved using a probabilistic argument. Its proof can be found in \cite{LMR1}.

Consider a 3-graph with $V(\mathcal{G})=[m]$ and pairwise disjoint sets $V_1,\ldots,V_{m}$.
The blowup $\mathcal{G}[V_1,\ldots,V_{m}]$ of $\mathcal{G}$ 
is obtained from $\mathcal{G}$ by replacing each vertex $j\in[m]$ with the set $V_j$ and
each edge $\{j_1,j_2,j_3\}\in \mathcal{G}$ with the complete $3$-partite $3$-graph
with vertex classes $V_{j_1}$, $V_{j_2}$, and $V_{j_3}$.
For a $3$-graph $\mathcal{H}$ we say that
a partition $V(\mathcal{H}) = \bigcup_{j\in[m]}V_j$ is a $\mathcal{G}$-coloring of $\mathcal{H}$
if $\mathcal{H} \subseteq \mathcal{G}[V_1,\ldots,V_{m}]$.

\begin{lemma}[\cite{LMR1}]\label{LEMMA:greedily-embedding-Gi}
Fix a real $\eta \in (0, 1)$ and integers $m, n\ge 1$.
Let $\mathcal{G}$ be a $3$-graph with vertex set~$[m]$ and let $\mathcal{H}$ be a further $3$-graph
with $v(\mathcal{H})=n$.
Consider a vertex partition $V(\mathcal{H}) = \bigcup_{i\in[m]}V_i$ and the associated
blowup $\widehat{\mathcal{G}} = \mathcal{G}[V_1,\ldots,V_{m}]$ of $\mathcal{G}$.
If two sets $T \subseteq [m]$ and $S\subseteq V$ (we allow $S$ to contain vertices from $V_i$ for $i\in T$)
have the properties
\begin{itemize}
\item[(a)] $|V_{j}| \ge (|S|+1)|T|\eta^{1/3} n + |S|$  for all $j \in T$,
\item[(b)] $|\mathcal{H}[V_{j_1},V_{j_2},V_{j_3}]| \ge |\widehat{\mathcal{G}}[V_{j_1},V_{j_2},V_{j_3}]| - \eta n^3$
            for all $\{j_1,j_2,j_3\} \in \binom{T}{3}$, and
\item[(c)] $|L_{\mathcal{H}}(v)[V_{j_1},V_{j_2}]| \ge |L_{\widehat{\mathcal{G}}}(v)[V_{j_1},V_{j_2}]| - \eta n^{3}$
            for all $v\in S$ and $\{j_1,j_{2}\} \in \binom{T}{2}$.
\end{itemize}
then there exists a selection of vertices $u_j\in V_j\setminus S$ for all $j\in [T]$
such that $U = \{u_j\colon j\in T\}$ satisfies
$\widehat{\mathcal{G}}[U] \subseteq \mathcal{H}[U]$ and
$L_{\widehat{\mathcal{G}}}(v)[U] \subseteq L_{\mathcal{H}}(v)[U]$ for all $v\in S$.
In particular, if $\mathcal{H} \subseteq \widehat{\mathcal{G}}$,
then $\widehat{\mathcal{G}}[U] = \mathcal{H}[U]$ and
$L_{\widehat{\mathcal{G}}}(v)[U] = L_{\mathcal{H}}(v)[U]$ for all $v\in S$.
\end{lemma}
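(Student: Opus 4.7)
The plan is to prove the lemma by a first-moment probabilistic argument: I choose each $u_j \in V_j \setminus S$ uniformly at random and independently for $j \in T$, and show that with positive probability no "defect" triple or pair occurs. By averaging, this produces a valid set $U = \{u_j : j \in T\}$, and the "in particular" assertion follows immediately because $\mathcal{H} \subseteq \widehat{\mathcal{G}}$ gives the opposite inclusions for free.

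First I would bound the expected number of defect triples. For each $\{j_1,j_2,j_3\} \in \binom{T}{3}$ with $\{j_1,j_2,j_3\} \in \mathcal{G}$, condition (b) yields at most $\eta n^3$ "bad" triples in $V_{j_1} \times V_{j_2} \times V_{j_3}$, i.e., triples in $\widehat{\mathcal{G}}$ but not in $\mathcal{H}$. Since condition (a) gives $|V_j \setminus S| \ge (|S|+1)|T|\eta^{1/3}n$ for every $j \in T$, the probability that the random triple $(u_{j_1}, u_{j_2}, u_{j_3})$ is one of the bad ones is at most
\[
\frac{\eta n^3}{\bigl((|S|+1)|T|\eta^{1/3} n\bigr)^3} = \frac{1}{(|S|+1)^3 |T|^3}.
\]
Summing over the at most $\binom{|T|}{3}$ coordinate triples bounds the expected number of defect triples by $1/\bigl(6(|S|+1)^3\bigr)$.

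Next I would carry out the analogous computation for pairs using condition (c). For each $v\in S$ (say $v\in V_{i(v)}$) and each $\{j_1,j_2\} \in \binom{T}{2}$ with $\{i(v),j_1,j_2\} \in \mathcal{G}$, the probability that $(u_{j_1},u_{j_2})$ lies in $L_{\widehat{\mathcal{G}}}(v) \setminus L_{\mathcal{H}}(v)$ is controlled by condition (c) divided by the two part-sizes; condition (a) again forces the union bound over $|S|\cdot\binom{|T|}{2}$ such events to be a small constant. The choice of the factor $|S|+1$ in (a) is exactly what is needed to accommodate both the one triple-type condition from (b) and the $|S|$ distinct pair-type conditions (one per $v\in S$) in a single clean expression, while the exponent $1/3$ on $\eta$ is forced by the cubic scaling $|V_{j_1}||V_{j_2}||V_{j_3}|$ that divides $\eta n^3$ in the triple estimate. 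Once both expected counts are shown to sum to less than $1$, a valid random outcome exists.

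The main obstacle is making the arithmetic in the two union-bound estimates close up simultaneously: the triple count is driven by $\eta n^3 / |V_j|^3$, forcing the $\eta^{1/3}$ exponent, whereas the pair count is driven by a quantity scaled by fewer powers of $|V_j|$ and must be controlled by the extra $|S|+1$ factor in condition (a). An alternative packaging, which I would write up if the probabilistic accounting gets unwieldy, is a sequential greedy: pick $u_{k_1}, u_{k_2}, \ldots$ in order, and at step $\ell$ use Markov's inequality on the already-chosen vertices to show that at most $(|S|+1)|T|\eta^{1/3}n$ vertices of $V_{k_\ell}$ are "bad" (extending some previously chosen configuration to a non-edge of $\mathcal{H}$ or to a missing pair in some $L_{\mathcal{H}}(v)$), so condition (a) leaves a valid choice outside $S$ at every step.
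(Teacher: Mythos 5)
Your overall strategy is the right one: the paper does not prove this lemma at all (it defers to \cite{LMR1} with the remark that it follows from an easy probabilistic argument), and your uniform-random-transversal first-moment argument (or the sequential greedy repackaging you sketch at the end) is exactly that kind of argument; the ``in particular'' clause is indeed immediate. Your triple computation is correct: by (a) each $|V_j\setminus S|\ge(|S|+1)|T|\eta^{1/3}n$, so each edge of $\mathcal{G}$ inside $T$ contributes expected defects at most $\eta n^3/\bigl((|S|+1)|T|\eta^{1/3}n\bigr)^3=1/\bigl((|S|+1)^3|T|^3\bigr)$, and summing over at most $\binom{|T|}{3}$ triples gives at most $1/\bigl(6(|S|+1)^3\bigr)$.

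The pair step, however, which you only assert (``condition (a) again forces the union bound \dots to be a small constant''), does not close with condition (c) as printed. Condition (c) bounds the link deficit by $\eta n^{3}$, so dividing by the two part-sizes gives only $\eta n^3/\bigl((|S|+1)|T|\eta^{1/3}n\bigr)^2=\eta^{1/3}n/\bigl((|S|+1)^2|T|^2\bigr)$ per bad event, which grows with $n$; no union bound can rescue this, and in fact the statement with $\eta n^3$ in (c) is false as printed: take $\mathcal{G}=\{123\}$, $T=\{2,3\}$, $S=\{v\}$ with $v\in V_1$, balanced parts, and $\mathcal{H}=\widehat{\mathcal{G}}$ minus all edges through $v$; for large $n$ condition (c) is vacuous because $|V_2||V_3|\le\eta n^3$, yet $L_{\widehat{\mathcal{G}}}(v)[U]\subseteq L_{\mathcal{H}}(v)[U]$ fails for every admissible $U$. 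The intended reading (and the one consistent with how the lemma is applied in Section 5, where the verified link deficits are of order $\epsilon^{1/4}\tilde n^2$) is $\eta n^2$ in (c); with that, your computation gives at most $\eta^{1/3}/\bigl((|S|+1)^2|T|^2\bigr)$ per event, hence expected pair defects at most $|S|\binom{|T|}{2}\cdot\eta^{1/3}/\bigl((|S|+1)^2|T|^2\bigr)<1/2$, so the total expectation is below $1$ and a good choice of $U$ exists. So your approach is the correct (and the intended) one, but you must carry out the pair arithmetic explicitly and address the exponent in (c); as written you are claiming a bound that the printed hypothesis does not deliver.
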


Now we prove Lemma~\ref{main-lemma-Ht-G2-color}.

\begin{proof}[Lemma~\ref{main-lemma-Ht-G2-color}]
Similar to Lemma \ref{main-lemma-Ht-semibipartite},
the proof of Lemma \ref{main-lemma-Ht-G2-color} is also by backward induction on $i$,
and the base case is $i = t$ as $\widetilde{\mathcal{H}}_{t} = \mathcal{H}_t$.
Now suppose that $\widetilde{\mathcal{H}}_{i+1}$ is $\mathcal{G}^2_6$-colorable for some $0 \le i \le t-1$,
and we want to show that $\widetilde{\mathcal{H}}_{i}$ is also $\mathcal{G}^2_6$-colorable.

Since $\widetilde{\mathcal{H}}_{i+1}$ is $\mathcal{G}^2_6$-colorable,
let
\[
\mathcal{P} = \{V_{1}^{i+1}, \ldots, V_{6}^{i+1}\}.
\]
be the set of six parts in $\widetilde{\mathcal{H}}_{i+1}$ such that there is no edge between
$V_1^{i+1}V_{2}^{i+1}V_3^{i+1}$, $V_{1}^{i+1}V_{2}^{i+1}V_{6}^{i+1}$,
$V_{3}^{i+1}V_{4}^{i+1}V_{5}^{i+1}$, and $V_{4}^{i+1}V_{5}^{i+1}V_{6}^{i+1}$
(and every edge in $\widetilde{\mathcal{H}}_{i+1}$ hits at most one vertex in $V_{j}^{i+1}$ for every $j\in [6]$).
We may assume that each set $V_{j}^{i+1}$ is a union of some equivalence classes.

Let
\begin{align}
y = (y_1,\ldots,y_6)
= \left(|V_1^{i+1}|/\tilde{n}, \ldots,|V_6^{i+1}|/\tilde{n}\right), \notag
\end{align}
and notice that a similar argument as in the proof of Lemma \ref{lemma-blowup-lagrang} yields
\begin{align}\label{equ-tilde-Hi+1-p(y)}
|\widetilde{\mathcal{H}}_{i+1}| \le p_{\mathcal{G}^2_{6}}(y) \tilde{n}^{3}.
\end{align}

First we give a lower bound and an upper bound for the size of every set in $\mathcal{P}$.

\begin{claim}\label{claim-P-size-2}
We have $\left| |A| - {\tilde{n}}/{6} \right| < 20 \epsilon^{1/4} \tilde{n}$ for every set $A \in \mathcal{P}$.
\end{claim}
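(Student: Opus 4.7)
The plan is to combine the Lagrangian upper bound already recorded as (5.1) with the lower bound on $|\widetilde{\mathcal{H}}_{i+1}|$ from Lemma 5.3, and then extract a quantitative stability statement from the proof of Lemma 4.2 (the computation of $\lambda(\mathcal{G}^2_{6})$).

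First, I would write
\[
p_{\mathcal{G}^2_{6}}(y) \,\ge\, \frac{|\widetilde{\mathcal{H}}_{i+1}|}{\tilde n^{3}}
\,\ge\, \left(\frac{4}{9}-10\epsilon^{1/2}\right)\frac{\binom{\tilde n}{3}}{\tilde n^{3}}
\,\ge\, \frac{2}{27} - C\epsilon^{1/2}
\]
for some absolute constant $C$, provided $\tilde n$ is large enough so that $\binom{\tilde n}{3}/\tilde n^{3}$ is within $\epsilon^{1/2}$ of $1/6$ (which is guaranteed by Lemma 5.2). So $y=(y_1,\ldots,y_6)$ is an almost-maximizer of $p_{\mathcal{G}^2_{6}}$ on the simplex $\Delta^{5}$.

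Next I would revisit the proof of Lemma 4.2 and track the defects in each AM--GM step. With $a=(x_3+x_6)/2$, $b=(x_1+x_2)/2$, $c=(x_4+x_5)/2$, $d=(b+c)/2$, the three bounds $x_1x_2\le b^{2}$, $x_4x_5\le c^{2}$, $x_3x_6\le a^{2}$ contribute defects $((x_1-x_2)/2)^{2}$, $((x_4-x_5)/2)^{2}$, $((x_3-x_6)/2)^{2}$ (with weights $2c$, $2b$, $2(b+c)$ respectively); the bound $bc\le d^{2}$ contributes a defect $((b-c)/2)^{2}$; and the terminal AM--GM step $(a+d)(a+d)(2d)\le 1/27$ contributes a defect of order $(a-d)^{2}$ since $a+2d=1/2$ and the unique interior critical point is $a=d=1/6$. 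Collecting these, one obtains an inequality of the shape
\[
\frac{2}{27} - p_{\mathcal{G}^2_{6}}(y) \,\ge\, c_{0}\bigl((x_1-x_2)^{2} + (x_4-x_5)^{2} + (x_3-x_6)^{2} + (b-c)^{2} + (a-d)^{2}\bigr) - C'\epsilon^{1/2}
\]
for some absolute positive constants $c_0,C'$, where the $C'\epsilon^{1/2}$ absorbs cases in which a weight like $x_4+x_5$ happens to be small (in those cases the coordinate of $y$ in question is itself forced to be $O(\epsilon^{1/4})$ away from $1/6$, which is the weaker conclusion we want).

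Combining with the first paragraph gives that each of $|x_1-x_2|$, $|x_4-x_5|$, $|x_3-x_6|$, $|b-c|$, $|a-d|$ is $O(\epsilon^{1/4})$. Together with the linear constraint $2a+2b+2c=1$ and $2d=b+c$, elementary algebra now pins all of $a,b,c,d$ within $O(\epsilon^{1/4})$ of $1/6$, and hence each $y_j$ within $O(\epsilon^{1/4})$ of $1/6$. Choosing the implicit constants carefully yields the bound $\bigl||A|-\tilde n/6\bigr|<20\epsilon^{1/4}\tilde n$ for every $A\in\mathcal{P}$, for $\epsilon$ sufficiently small.

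The main obstacle is the bookkeeping in the middle paragraph: several of the AM--GM weights are themselves variables (e.g.\ $x_4+x_5$ multiplying $(x_1-x_2)^{2}/4$), so one has to handle the case where such a weight is small separately. The cleanest way to manage this is to do a short case split -- if any of $a,b,c,d$ is outside an $O(\epsilon^{1/4})$ window around $1/6$, use the terminal AM--GM step $4d(a+d)^{2}\le 2/27$ (which only involves $a,d$ with constant coefficients) to get a contradiction with $p_{\mathcal{G}^2_{6}}(y)\ge 2/27-C\epsilon^{1/2}$, and otherwise the internal weights are bounded below by a constant and the squared-difference arguments go through directly.
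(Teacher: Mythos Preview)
Your proposal is correct and follows essentially the same route as the paper. Both arguments start from the lower bound $p_{\mathcal{G}^2_{6}}(y)\ge 2/27-O(\epsilon^{1/2})$ obtained by combining (5.1) with Lemma~5.3, then track the AM--GM defects from the proof of Lemma~4.2, first pinning down $a$ and $d$ via the terminal step (the paper uses the neat factorization $1/27-(a+d)^{2}\cdot 2d=(a-1/6)^{2}(a/4+5/24)$ together with $2a+4d=1$), and only then using that $a,d\ge 1/8$ to make the remaining weighted squared-difference inequalities effective for the individual $y_j$.
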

\begin{proof}[Proof of Claim \ref{claim-P-size-2}]
Let $\eta = 4\epsilon^{1/2}$ and note that by assumption $\eta>0$ is sufficiently small and $\tilde{n}$ is sufficiently large.
First, it follows from $(\ref{equ-tilde-Hi+1-p(y)})$ and Lemma~\ref{lemma-low-bound-tilde-Hi} that
\begin{align}
p_{\mathcal{G}^2_{6}}(y_1,\ldots,y_{6})
\ge \left(4/9- 10 \epsilon^{1/2} \right)\binom{\tilde{n}}{3}/\tilde{n}^3
\ge 2/27 - \eta. \notag
\end{align}
On the other hand,
let $a=(y_3+y_6)/2$, $b = (y_1+y_2)/2$, $c=(y_4+y_5)/2$, $d=(b+c)/2$ and
recall from the proof of Lemma~\ref{lemma-lagrang-G26} that
\begin{align}
p_{\mathcal{G}^2_{6}}(y_1,\ldots,y_{6})
& = y_3y_6(y_1+y_2+y_4+y_5) \notag\\
& \quad + (y_1+y_2)(y_3+y_6)(y_4+y_5) + y_1y_2(y_4+y_5) + y_4y_5(y_1+y_2) \notag\\
& \le 2a^2(b+c)+8abc+2bc(b+c)
 \le 4a^2d + 8ad^2 + 4d^3
 = 2\left((a+d)\cdot (a+d)\cdot 2d\right). \notag
\end{align}
Therefore,
\begin{align}\label{equ:G2-size-U-1}
(a+d)\cdot (a+d)\cdot 2d \ge 1/27 - \eta/2,
\end{align}
and
\begin{align}\label{equ:G2-size-U-2}
4d(a^2- y_3y_6) \le \eta, \quad
4d(d^2-bc) \le  \eta, \quad
2c(b^2-y_1y_2) \le  \eta, \quad
2b(c^2-y_4y_5) \le  \eta.
\end{align}
Now $(\ref{equ:G2-size-U-1})$ and $2a+4d=1$ yield
\begin{align}
\eta/2
\ge 1/27 - (a+d)^2\cdot 2d
= 1/27 - (1+2a)^2(1-2a)/32
& = (a-1/6)^2(a/4+5/24) \notag\\
& \ge (a-1/6)^2/8, \notag
\end{align}
whence $|a-1/6| \le 2\eta^{1/2}$.
By $2|a-1/6| = 4|d-1/6|$ this implies $|d-1/6|\le \eta^{1/2}$.
Since $\eta$ is sufficiently small, it follows that $a,d \ge 1/8$.
So the first inequality in  $(\ref{equ:G2-size-U-2})$ leads to $(y_3-y_6)\le 8\eta$,
whence $|y_3-y_6| \le 3\eta^{1/2}$.
By the triangle inequality we obtain
\begin{align}
2|y_3-1/6|
\le |y_3-y_6| + |y_3+y_6-1/3|
\le 3\eta^{1/2} + 2|a-1/6|
\le 7\eta^{1/2}, \notag
\end{align}
which shows $|y_3-1/6| \le 4\eta^{1/2}$.
Similarly, $|y_6-1/6|\le 4\eta^{1/2}$.
Applying the same reasoning to the other estimates in $(\ref{equ:G2-size-U-2})$
we obtain first $|b-1/6|, |c-1/6| \le 3\eta^{1/2}$ and then
$|y_i-1/6|\le 5\eta^{1/2}$ for every $i\in\{1,2,4,5\}$.
\end{proof}

\begin{figure}[htbp]
\centering
\begin{tikzpicture}[xscale=3,yscale=3]
    \node (a1) at (-0.5,0.5) {};
    \node (a2) at (0.5,0.5) {};
    \node (a3) at (0.5,-0.5) {};
    \node (a4) at (-0.5,-0.5) {};
    \node (b1) at (-1,0) {};
    \node (b2) at (1,0) {};
    \node (v) at (0,0) {};

    \fill (a1)  node {$U_1$};
    \fill (a2)  node {$U_2$};
    \fill (a3)  node {$U_4$};
    \fill (a4)  node {$U_5$};
    \fill (b1)  node {$U_6$};
    \fill (b2)  node {$U_3$};
    \fill (v)  node {$C_v$};

    \draw[line width=0.8pt] (0.5,0.5) circle [radius = 0.13];
    \draw[line width=0.8pt] (0.5,-0.5) circle [radius = 0.13];
    \draw[line width=0.8pt] (-0.5,0.5) circle [radius = 0.13];
    \draw[line width=0.8pt] (-0.5,-0.5) circle [radius = 0.13];
    \draw[line width=0.8pt] (1,0) circle [radius = 0.13];
    \draw[line width=0.8pt] (-1,0) circle [radius = 0.13];
    \draw[line width=0.8pt] (0,0) circle [radius = 0.13];

    \draw[line width=0.8pt,color=sqsqsq,fill=sqsqsq,fill opacity=0.15,dash pattern=on 1pt off 1.2pt]
        (0.5+0.15,0.5) to [out = 90, in = 0] (-0.5,0.5+0.15) to [out = 180, in = 180] (-1,0-0.15)
        to [out = 0, in = 180] (-0.5,0.5-0.15) to [out = 0, in = 270] (0.5+0.15,0.5);
    \draw[line width=0.8pt,color=sqsqsq,fill=sqsqsq,fill opacity=0.15,dash pattern=on 1pt off 1.2pt]
        (-0.5-0.15,0.5) to [out = 90, in = 180] (0.5,0.5+0.15) to [out = 0, in = 0] (1,0-0.15)
        to [out = 180, in = 0] (0.5,0.5-0.15) to [out = 180, in = 270] (-0.5-0.15,0.5);
    \draw[line width=0.8pt,color=sqsqsq,fill=sqsqsq,fill opacity=0.15,dash pattern=on 1pt off 1.2pt]
        (0.5+0.15,-0.5) to [out = 270, in = 0] (-0.5,-0.5-0.15) to [out = 180, in = 180] (-1,0+0.15)
        to [out = 0, in = 180] (-0.5,-0.5+0.15) to [out = 0, in = 90] (0.5+0.15,-0.5);
    \draw[line width=0.8pt,color=sqsqsq,fill=sqsqsq,fill opacity=0.15,dash pattern=on 1pt off 1.2pt]
        (-0.5-0.15,-0.5) to [out = 270, in = 180] (0.5,-0.5-0.15) to [out = 0, in = 0] (1,0+0.15)
        to [out = 180, in = 0] (0.5,-0.5+0.15) to [out = 180, in = 90] (-0.5-0.15,-0.5);

\end{tikzpicture}
\caption{$\widetilde{\mathcal{H}}_{i+1}$ is obtained from $\widetilde{\mathcal{H}}_{i}$ by symmetrizing $C_v$
to some equivalence class $C_u$ that is contained in $U_j$ for some $j\in[6]$.
Dashed lines indicate that there is no edge between these parts in $\widetilde{\mathcal{H}}_{i}$.}
\label{figure:Hi-G2}
\end{figure}
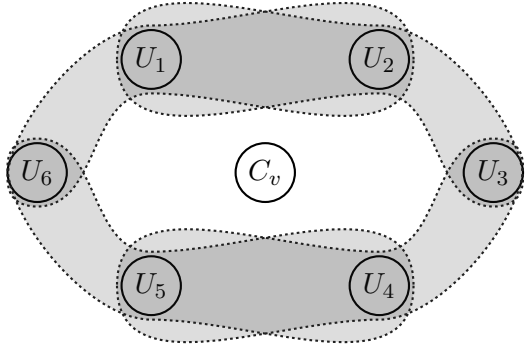

Lemma \ref{lemma-Hi+1-and-Hi} implies that either $\widetilde{\mathcal{H}}_{i} = \widetilde{\mathcal{H}}_{i+1}$
or there exists two equivalence classes $C_v$ and $C_u$ in $\widetilde{\mathcal{H}}_{i}$ such that $\widetilde{\mathcal{H}}_{i}$
is obtained from $\widetilde{\mathcal{H}}_{i+1}$ by symmetrizing $C_v$ to $C_u$ (see Figure~\ref{figure:Hi-G2}).
In the former case, there is nothing to prove, so we may assume that we are in the latter case.
Notice that $C_{v}$ and $C_u$ are contained in the same member in $\mathcal{P}$, and in particular,
Claim \ref{claim-P-size-2} implies that $|C_v| \le (1/6 + 10 \epsilon^{1/4})\tilde{n}$.
In the rest of the proof we will focus on the structure of $\widetilde{\mathcal{H}}_{i}$.
Let $U_j = V_{j}^{i+1} \setminus C_v$ for $j \in [6]$, $W' = W \setminus C_v$, and
\[
\mathcal{P}' = \{U_1,\ldots, U_6\}.
\]
Notice that there exists $j_0 \in [6]$ such that $U_{j_0}\cup C_v = V_{j_0}^{i+1}$,
and $U_j = V_{j}^{i+1}$ holds for all $j\in[6]\setminus\{j_0\}$.
In particular, no set in $\mathcal{P}'$ is the empty set.

First we will prove several claims about sets in $\mathcal{P}'$.
Since $U_1$ is a representative for sets in $\{U_1,U_2, U_4,U_5\}$
and $U_3$ is a representative for sets in $\{U_3,U_6\}$, we shall only prove the statements for $U_1$ and $U_3$,
and by symmetry, the statements hold for all sets in $\mathcal{P}'$.

Denote by $\widehat{\mathcal{G}}$ the blowup $\mathcal{G}_{6}^2[U_1,\ldots, U_6]$
of $\mathcal{G}_{6}^{2}$, and notice that $\widetilde{\mathcal{H}}_{i}[W'] \subset \widehat{\mathcal{G}}$.
For $j\in [6]$ fix a vertex $a_j \in U_j$,
let $\widetilde{G}_j = L_{\widehat{\mathcal{G}}}(a_j)$,
$G_j = \widetilde{G}_j[\{a_1,\ldots,a_6\}\setminus\{a_j\}]$,
and notice that $\widetilde{G}_j$ is a graph on $W'\setminus U_j$ and is a blowup of $G_j$ (see Figure~\ref{figure:G1-G3}).

\begin{figure}[htbp]
\centering
\subfigure[The graph ${G}_1$ is the $5$-vertex graph above, and $\widetilde{G}_1$ is a blowup of ${G}_1$.]
{
\begin{minipage}[t]{0.45\linewidth}
\centering
\begin{tikzpicture}[xscale=2.5,yscale=2.5]
    \node (a1) at (-0.5,0.5) {};
    \node (a2) at (0.5,0.5) {};
    \node (a3) at (0.5,-0.5) {};
    \node (a4) at (-0.5,-0.5) {};
    \node (b1) at (-1,0) {};
    \node (b2) at (1,0) {};

    \fill (a1) circle (0.02) node [above] {$a_1$};
    \fill (a2) circle (0.02) node [above] {$a_2$};
    \fill (a3) circle (0.02) node [below] {$a_4$};
    \fill (a4) circle (0.02) node [below] {$a_5$};
    \fill (b1) circle (0.02) node [left] {$a_6$};
    \fill (b2) circle (0.02) node [right] {$a_3$};

    \draw[line width=0.8pt] (0.5,0.5) circle [radius = 0.2];
    \draw[line width=0.8pt] (0.5,-0.5) circle [radius = 0.2];
    \draw[line width=0.8pt] (-0.5,0.5) circle [radius = 0.2];
    \draw[line width=0.8pt] (-0.5,-0.5) circle [radius = 0.2];
    \draw[line width=0.8pt] (1,0) circle [radius = 0.2];
    \draw[line width=0.8pt] (-1,0) circle [radius = 0.2];

    \draw[line width=0.8pt]
    (0.5,0.5) -- (0.5,-0.5);
    \draw[line width=0.8pt]
    (0.5,0.5) -- (-0.5,-0.5);
    \draw[line width=0.8pt]
    (1,0) -- (0.5,-0.5);
    \draw[line width=0.8pt]
    (1,0) -- (-0.5,-0.5);
    \draw[line width=0.8pt]
    (1,0) -- (-1,0);
    \draw[line width=0.8pt]
    (-1,0) -- (0.5,-0.5);
    \draw[line width=0.8pt]
    (-1,0) -- (-0.5,-0.5);
    \draw[line width=0.8pt]
    (-0.5,-0.5) -- (0.5,-0.5);

    \draw[line width=0.8pt,,dash pattern=on 1pt off 1.2pt]
    (-1.2,0.3) to [out = 0, in = 225] (0, 0.3)  to [out = 45, in = 250] (0.3, 0.9);

    \node at (-0.5,0.7+0.1) {$U_1$};
    \node at (0.5,0.7+0.1) {$U_2$};
    \node at (0.5,-0.7-0.1) {$U_4$};
    \node at (-0.5,-0.7-0.1) {$U_5$};
    \node at (-1,-0.2 -0.1) {$U_6$};
    \node at (1, -0.2-0.1) {$U_3$};
\end{tikzpicture}
\end{minipage}
}
\subfigure[The graph ${G}_3$ is the $5$-vertex graph above, and $\widetilde{G}_3$ is a blowup of ${G}_3$.]
{
\begin{minipage}[t]{0.45\linewidth}
\centering
\begin{tikzpicture}[xscale=2.5,yscale=2.5]
    \node (a1) at (-0.5,0.5) {};
    \node (a2) at (0.5,0.5) {};
    \node (a3) at (0.5,-0.5) {};
    \node (a4) at (-0.5,-0.5) {};
    \node (b1) at (-1,0) {};
    \node (b2) at (1,0) {};

    \fill (a1) circle (0.02) node [above] {$a_1$};
    \fill (a2) circle (0.02) node [above] {$a_2$};
    \fill (a3) circle (0.02) node [below] {$a_4$};
    \fill (a4) circle (0.02) node [below] {$a_5$};
    \fill (b1) circle (0.02) node [left] {$a_6$};
    \fill (b2) circle (0.02) node [right] {$a_3$};

    \draw[line width=0.8pt] (0.5,0.5) circle [radius = 0.2];
    \draw[line width=0.8pt] (0.5,-0.5) circle [radius = 0.2];
    \draw[line width=0.8pt] (-0.5,0.5) circle [radius = 0.2];
    \draw[line width=0.8pt] (-0.5,-0.5) circle [radius = 0.2];
    \draw[line width=0.8pt] (1,0) circle [radius = 0.2];
    \draw[line width=0.8pt] (-1,0) circle [radius = 0.2];

    \draw[line width=0.8pt]
    (-0.5,0.5) -- (0.5,-0.5);
    \draw[line width=0.8pt]
    (-0.5,0.5) -- (-0.5,-0.5);
    \draw[line width=0.8pt]
    (-0.5,0.5) -- (-1,0);
    \draw[line width=0.8pt]
    (-0.5,-0.5) -- (0.5,0.5);
    \draw[line width=0.8pt]
    (-0.5,-0.5) -- (-1,0);
    \draw[line width=0.8pt]
    (0.5,0.5) -- (0.5,-0.5);
    \draw[line width=0.8pt]
    (-1,0) -- (0.5,0.5);
    \draw[line width=0.8pt]
    (-1,0) -- (0.5,-0.5);

    \draw[line width=0.8pt,,dash pattern=on 1pt off 1.2pt]
    (0.75,0.75) -- (0.75,-0.75);

    \node at (-0.5,0.7+0.1) {$U_1$};
    \node at (0.5,0.7+0.1) {$U_2$};
    \node at (0.5,-0.7-0.1) {$U_4$};
    \node at (-0.5,-0.7-0.1) {$U_5$};
    \node at (-1,-0.2 -0.1) {$U_6$};
    \node at (1, -0.2-0.1) {$U_3$};

\end{tikzpicture}
\end{minipage}
}
\centering
\caption{Graphs ${G}_1$ and ${G}_3$.}
\label{figure:G1-G3}
\end{figure}
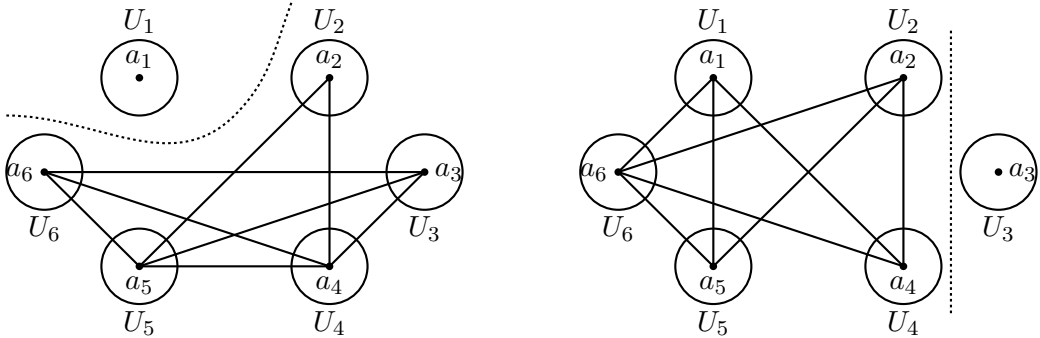

For every $w \in W$, let $L(w) = L_{\widetilde{\mathcal{H}}_{i}}(w)$ and $N(w) = N_{\widetilde{\mathcal{H}}_{i}}(w)$.
Since $\widetilde{\mathcal{H}}_{i+1}$ is $\mathcal{G}^2_6$-colorable and $\widetilde{\mathcal{H}}_{i}[W'] = \widetilde{\mathcal{H}}_{i+1}[W']$,
it follows that $L(w)[W'] \subset \widetilde{G}_{j}$ for all $j\in [6]$ and $w \in U_j$.
For every $j\in [6]$ and every $w \in U_j$ let
\[
M(w) = \left\{ w_1w_2 \in \widetilde{G}_{j} : w_1w_2 \not\in  L(w)[W'] \right\},
\]
and call members in $M(w)$ missing edges of $L(w)[W']$.

\begin{claim}\label{claim-size-MU(w)-MV(w')}
We have $|M(w)| \le 30 \epsilon^{1/4} \tilde{n}^2$ for every $w \in W$.
\end{claim}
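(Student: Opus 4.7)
The plan is to control $|M(w)|$ for $w \in U_j$ by replacing the link $L(w)[W']$ in $\widetilde{\mathcal{H}}_{i}$ with the corresponding link in $\widetilde{\mathcal{H}}_{i+1}$, which by the inductive hypothesis lies inside the model blowup of $\mathcal{G}^2_6$ and hence admits a sharp upper bound, while Lemma~\ref{lemma-low-bound-tilde-Hi} provides a matching lower bound on its size.

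First I would observe that the step of Algorithm~2 transforming $\widetilde{\mathcal{H}}_i$ into $\widetilde{\mathcal{H}}_{i+1}$ modifies only edges meeting $C_v$, so that $\widetilde{\mathcal{H}}_i[W'] = \widetilde{\mathcal{H}}_{i+1}[W']$; consequently $L(w)[W'] = L_{\widetilde{\mathcal{H}}_{i+1}}(w)[W']$ for every $w \in W'$. Set $\widehat{\mathcal{G}}_{i+1} := \mathcal{G}^2_{6}[V^{i+1}_1,\ldots,V^{i+1}_6]$; by the inductive hypothesis $\widetilde{\mathcal{H}}_{i+1} \subseteq \widehat{\mathcal{G}}_{i+1}$, so $L_{\widetilde{\mathcal{H}}_{i+1}}(w) \subseteq L_{\widehat{\mathcal{G}}_{i+1}}(w)$. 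Since $V^{i+1}_{j'} \cap W' = U_{j'}$ for every $j' \in [6]$, the restriction of $\widehat{\mathcal{G}}_{i+1}$ to $W'$ is precisely $\widehat{\mathcal{G}} = \mathcal{G}^2_{6}[U_1,\ldots,U_6]$, which gives $|L_{\widehat{\mathcal{G}}_{i+1}}(w)[W']| = |\widetilde{G}_j|$ for $w \in U_j$. Combining these identifications yields
\begin{align*}
|M(w)| &= |\widetilde{G}_j| - |L(w)[W']| = |L_{\widehat{\mathcal{G}}_{i+1}}(w)[W']| - |L_{\widetilde{\mathcal{H}}_{i+1}}(w)[W']| \\
       &\le |L_{\widehat{\mathcal{G}}_{i+1}}(w)| - |L_{\widetilde{\mathcal{H}}_{i+1}}(w)|,
\end{align*}
where the last inequality uses that the pairs in $L_{\widetilde{\mathcal{H}}_{i+1}}(w)$ meeting $C_v$ constitute a subset of the pairs in $L_{\widehat{\mathcal{G}}_{i+1}}(w)$ meeting $C_v$.

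To finish, I would bound the two terms on the right. Each vertex of $\mathcal{G}^2_{6}$ lies in exactly eight edges, so $|L_{\widehat{\mathcal{G}}_{i+1}}(w)|$ is a sum of eight products $|V^{i+1}_{j_1}|\,|V^{i+1}_{j_2}|$. By Claim~\ref{claim-P-size-2} each $|V^{i+1}_{j'}|$ lies within $20\epsilon^{1/4}\tilde{n}$ of $\tilde{n}/6$, so an elementary calculation gives $|L_{\widehat{\mathcal{G}}_{i+1}}(w)| \le 2\tilde{n}^2/9 + O(\epsilon^{1/4}\tilde{n}^2)$. Meanwhile Lemma~\ref{lemma-low-bound-tilde-Hi} yields the matching lower bound $|L_{\widetilde{\mathcal{H}}_{i+1}}(w)| \ge (4/9 - 10\epsilon^{1/2})\binom{\tilde{n}-1}{2} \ge 2\tilde{n}^2/9 - O(\epsilon^{1/2}\tilde{n}^2)$. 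Subtracting the two estimates delivers $|M(w)| \le 30\epsilon^{1/4}\tilde{n}^2$ for all sufficiently small $\epsilon$.

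The only conceptually non-trivial step is the identification $\widetilde{\mathcal{H}}_i[W'] = \widetilde{\mathcal{H}}_{i+1}[W']$, which lets us replace the a priori unstructured link $L(w)[W']$ in $\widetilde{\mathcal{H}}_i$ by the much better-behaved link in $\widetilde{\mathcal{H}}_{i+1}$; after that substitution, tracking the error terms from Claim~\ref{claim-P-size-2} and Lemma~\ref{lemma-low-bound-tilde-Hi} is routine and I foresee no serious obstacle beyond the accounting of constants.
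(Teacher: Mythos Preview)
Your proposal is correct and follows essentially the same approach as the paper: you identify $L(w)[W']$ with $L_{\widetilde{\mathcal{H}}_{i+1}}(w)[W']$ via the key observation $\widetilde{\mathcal{H}}_i[W']=\widetilde{\mathcal{H}}_{i+1}[W']$, then bound $|M(w)|$ by the difference between the link of $w$ in the full blowup $\mathcal{G}^2_6[V^{i+1}_1,\ldots,V^{i+1}_6]$ (which the paper denotes $\widehat{G}_1$) and $|L_{\widetilde{\mathcal{H}}_{i+1}}(w)|$, finishing with Claim~\ref{claim-P-size-2} and Lemma~\ref{lemma-low-bound-tilde-Hi}. The only cosmetic difference is notation; the logic and the numerical estimates are the same.
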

\begin{proof}[Proof of Claim \ref{claim-size-MU(w)-MV(w')}]
We shall only prove the case $w\in U_1$, since the arguments for other cases are similar.
Fix a vertex $w \in U_1$.
Let $\widehat{G}_{1}$ be the blowup of $G_{1}$ obtained by replacing each vertex in $V(G_{1})$ with the set in $\mathcal{P}$
that contains it.
Since $\widetilde{\mathcal{H}}_{i+1}$ is $\mathcal{G}^{2}_6$-colorable,
$L_{\widetilde{\mathcal{H}}_{i+1}}(w) \subset \widehat{G}_1$.
On the other hand, since $L_{\widetilde{\mathcal{H}}_{i}}(w)[W'] = L_{\widetilde{\mathcal{H}}_{i+1}}(w)[W']$,
it follows from Lemma \ref{lemma-low-bound-tilde-Hi} and Claim \ref{claim-P-size-2} that
\begin{align}
|M(w)| = |\widetilde{G}_1 \setminus L_{\widetilde{\mathcal{H}}_{i}}(w)[W']|
&  \le |\widehat{G}_1 \setminus L_{\widetilde{\mathcal{H}}_{i+1}}(w)|  \notag \\
& = |\widehat{G}_{1}| - |L_{\widetilde{\mathcal{H}}_{i+1}}(w)| \notag \\
& < 8 \left( {1}/{6} + 10 \epsilon^{{1}/{4}} \right)^2 \tilde{n}^2
- \left({4}/{9} - 10\epsilon^{1/2} \right)\binom{\tilde{n}-1}{2} \notag \\
& < 30 \epsilon^{{1}/{4}}\tilde{n}^2. \notag
\end{align}
\end{proof}

By Lemma~\ref{lemma-low-bound-tilde-Hi} and Claim~\ref{claim-size-MU(w)-MV(w')},
$\widetilde{\mathcal{H}}_{i}$ and $\widehat{\mathcal{G}}$ satisfy the following statements,
which will be useful later when we applying Lemma~\ref{LEMMA:greedily-embedding-Gi}.

\begin{itemize}
\item[(a)] $|\widetilde{\mathcal{H}}_{i}[A_1,A_2,A_3]| \ge |\widehat{\mathcal{G}}[A_1,A_2,A_3]| - 2\epsilon^{1/2} n^3$
            for every triple $\{A_1,A_2,A_3\} \subset \mathcal{P}'$, and
\item[(b)] $|L_{\widetilde{\mathcal{H}}_{i}}(u)[A_1,A_2]| \ge |L_{\widehat{\mathcal{G}}}(u)[A_1,A_2]| - 30\epsilon^{1/4} n^{3}$
            for every $u\in W'$ and every pair $\{A_1,A_2\} \subset \mathcal{P}'$ satisfying $u\not\in A_1\cup A_2$.
\end{itemize}

\begin{claim}\label{claim-N(w)-cap-P}
Let $j\in[6]$ and $w\in U_j$.
Then $|N(w) \cap (W'\setminus U_j)| > |W' \setminus U_j| - 400 \epsilon^{1/4} \tilde{n}$.
\end{claim}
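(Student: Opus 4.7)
The plan is to show that any $u \in W' \setminus U_j$ with $u \notin N(w)$ must be incident to many missing edges of $L(w)[W']$; since by Claim~\ref{claim-size-MU(w)-MV(w')} the total number of missing edges is small, only few vertices can fail to lie in $N(w)$.

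First I would observe that if $u \in W' \setminus U_j$ satisfies $u \notin N(w)$, then no triple of the form $\{w,u,x\}$ is an edge of $\widetilde{\mathcal{H}}_{i}$, so no pair of the form $\{u,x\}$ lies in $L(w)[W']$. Since $L(w)[W'] \subseteq \widetilde{G}_{j}$, this forces every edge of $\widetilde{G}_{j}$ incident to $u$ to be a missing edge, i.e., to lie in $M(w)$. Consequently the number of missing edges incident to $u$ is exactly $d_{\widetilde{G}_{j}}(u)$.

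Next I would establish a uniform lower bound on $d_{\widetilde{G}_{j}}(u)$ for $u \in W' \setminus U_{j}$. By direct inspection of $\overline{\mathcal{G}^2_6} = \{123,126,345,456\}$, each of the five-vertex link graphs $G_1,\dots,G_6$ has minimum degree at least $2$ (for instance, in $G_1$ the only non-edges are $\{2,3\}$ and $\{2,6\}$). Combining this with Claim~\ref{claim-P-size-2}, which guarantees $|U_k| \ge (1/6 - 20\epsilon^{1/4})\tilde{n}$ for every $k \in [6]$, yields
\[
d_{\widetilde{G}_{j}}(u) \;\ge\; 2\bigl(1/6 - 20\epsilon^{1/4}\bigr)\tilde{n}
\]
for every $u \in W' \setminus U_{j}$.

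Let $X := \{u \in W' \setminus U_j \colon u \notin N(w)\}$. Double-counting the missing edges incident to $X$ gives
\[
|X| \cdot 2\bigl(1/6 - 20\epsilon^{1/4}\bigr)\tilde{n} \;\le\; \sum_{u \in X} d_{\widetilde{G}_{j}}(u) \;\le\; 2|M(w)| \;\le\; 60\epsilon^{1/4}\tilde{n}^{2},
\]
which, for $\epsilon$ sufficiently small, implies $|X| < 400\epsilon^{1/4}\tilde{n}$, as desired. No step is really a serious obstacle here; the only point to check carefully is that each $G_j$ indeed has minimum degree at least $2$ (which one reads off the complement of $\mathcal{G}^2_6$), and that the constant $400$ is slack enough to absorb the $\epsilon^{1/4}$ correction coming from Claim~\ref{claim-P-size-2}.
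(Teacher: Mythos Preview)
Your argument is essentially the paper's own: both proofs bound the number of non-neighbours of $w$ by observing that each such vertex contributes at least $\delta(\widetilde{G}_j)$ missing edges, and then dividing $2|M(w)|$ by this minimum degree.

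There is, however, one slip. You invoke Claim~\ref{claim-P-size-2} to assert $|U_k|\ge (1/6-20\epsilon^{1/4})\tilde n$ for \emph{every} $k\in[6]$, but that claim bounds the parts $V_k^{i+1}$ of $\widetilde{\mathcal{H}}_{i+1}$, not the sets $U_k=V_k^{i+1}\setminus C_v$. Since $C_v$ sits inside exactly one $V_{j_0}^{i+1}$ and may have size up to $(1/6+20\epsilon^{1/4})\tilde n$, the corresponding $U_{j_0}$ can be arbitrarily small. Thus your degree bound $d_{\widetilde G_j}(u)\ge 2(1/6-20\epsilon^{1/4})\tilde n$ is not justified: only five of the six $U_k$ are guaranteed large, so a vertex whose two $G_j$-neighbours happen to include $j_0$ gets degree at least $(1/6-20\epsilon^{1/4})\tilde n$, not twice that. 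This is exactly how the paper argues. Fortunately the weaker bound still suffices: $\frac{60\epsilon^{1/4}\tilde n^2}{(1/6-20\epsilon^{1/4})\tilde n}<400\epsilon^{1/4}\tilde n$ for small $\epsilon$, so your conclusion stands once you halve the degree estimate.
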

\begin{proof}[Proof of Claim~\ref{claim-N(w)-cap-P}]
We shall only prove the case that $P=U_1$, since the arguments for other cases are similar.
Let $w \in U_1$ and $W'' = W' \setminus U_1$.
Since $C_v$ is contained in exactly one set in $\mathcal{P}$, it follows from Claim~\ref{claim-P-size-2} that
all but at most one set in $\mathcal{P}'$ have size at least $\left({1}/{6} -  10 \epsilon^{{1}/{4}} \right)\tilde{n}$.
On the other hand, since $\delta(G_1) \ge 2$ and $\widetilde{G}_{1}$ is a blowup of $G_1$, we obtain
\[
\delta(\widetilde{G}_{1}) > \left({1}/{6} -  10 \epsilon^{{1}/{4}} \right)\tilde{n}.
\]
So it follows from Claim \ref{claim-size-MU(w)-MV(w')} that the number of vertices in $W''$ with degree $0$ in $L(w)[W']$ is at most
\[
\frac{ 2|M_{U}(w)| }{ \delta(\widetilde{G}_{1}) } <
\frac{ 60 \epsilon^{1/4} \tilde{n}^2}{ (1/6 - 10 \epsilon^{1/4})\tilde{n} } < 400 \epsilon^{1/4} \tilde{n}.
\]
\end{proof}

Recall that $\widetilde{\mathcal{H}}_{i+1}$ is obtained from $\widetilde{\mathcal{H}}_{i}$ by symmetrizing $C_v$ to $C_u$,
where $C_v$ and $C_u$ are equivalence classes of $v$ and $u$ in $\widetilde{\mathcal{H}}_{i}$, respectively.
Let $P_u$ denote the member in $\mathcal{P}'$ that contains $u$ and
notice that $P_{u} \cup C_v$ is a member in $\mathcal{P}$.
So Claim~\ref{claim-P-size-2} implies that $|P_{u} \cup C_v| \le (1/6 + 10 \epsilon^{1/4})\tilde{n}$.

\begin{claim}\label{claim-if-Cv-large}
Suppose that $|C_{v}| > \tilde{n}/12$.
Then every vertex in $W'\setminus P_{u}$ is adjacent to all vertices in $C_{v}$ in $\widetilde{\mathcal{H}}_{i}$.
\end{claim}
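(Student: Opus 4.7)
The plan is to argue by contradiction, playing off the lower bound on $d(w)$ from Lemma~\ref{lemma-low-bound-tilde-Hi} against a structural upper bound on $|L(w)|$ that is forced by the $\mathcal{G}^2_6$-colouring of $W'$ together with the smallness of $|U_{j_u}|$. Suppose that some $w\in W'\setminus P_u$ is non-adjacent in $\widetilde{\mathcal{H}}_i$ to $v$; since $C_v$ is an equivalence class of $\widetilde{\mathcal{H}}_i$, the vertex $w$ is then non-adjacent to every vertex of $C_v$.

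Let $k\in[6]$ be the index with $w\in U_k$, so $k\ne j_u$. Because $w$ has no edge into $C_v$, the link $L(w)$ lies entirely in $\binom{W'\setminus\{w\}}{2}$; and since $\widetilde{\mathcal{H}}_i[W']\subset\widehat{\mathcal{G}}$, it actually satisfies $L(w)\subset\widetilde{G}_k$. Hence
\[
|L(w)|\le|\widetilde{G}_k|=\sum_{\{j_1,j_2\}\in G_k}|U_{j_1}||U_{j_2}|.
\]
I split this sum according to whether the edge of $G_k$ touches $j_u$. The hypothesis $|C_v|>\tilde n/12$ combined with Claim~\ref{claim-P-size-2} yields $|U_{j_u}|=|V_{j_u}^{i+1}|-|C_v|<\tilde n/12+20\epsilon^{1/4}\tilde n$, while $|U_j|<\tilde n/6+20\epsilon^{1/4}\tilde n$ for every $j\ne j_u$. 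Moreover, $G_k$ is isomorphic to $K_5$ with two edges removed, so $\delta(G_k)\ge 2$ and in particular $d:=d_{G_k}(j_u)\in\{2,3,4\}$.

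Substituting these bounds gives
\[
|\widetilde{G}_k|\le d\cdot\tfrac{\tilde n}{12}\cdot\tfrac{\tilde n}{6}+(8-d)\cdot\tfrac{\tilde n^2}{36}+O(\epsilon^{1/4})\tilde n^2=\tfrac{16-d}{72}\,\tilde n^2+O(\epsilon^{1/4})\tilde n^2\le\tfrac{7}{36}\,\tilde n^2+O(\epsilon^{1/4})\tilde n^2,
\]
the last step using $d\ge 2$. On the other hand Lemma~\ref{lemma-low-bound-tilde-Hi} yields $|L(w)|=d(w)\ge(4/9-10\epsilon^{1/2})\binom{\tilde n-1}{2}\ge\tfrac{8}{36}\tilde n^2-O(\epsilon^{1/2})\tilde n^2$. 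Combining the two estimates produces $\tfrac{8}{36}-O(\epsilon^{1/2})\le\tfrac{7}{36}+O(\epsilon^{1/4})$, which fails once $\epsilon$ is small enough and contradicts our assumption.

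The main obstacle is obtaining a clean upper bound for $|\widetilde{G}_k|$ that is uniform over the choice of $(j_u,k)$ with $j_u\ne k$. The crucial input making this work is the inequality $\delta(G_k)\ge 2$ for every link graph of $\mathcal{G}^2_6$: this guarantees that at least two edges of $G_k$ meet $j_u$, each of which receives the small factor $|U_{j_u}|<\tilde n/12+o(\tilde n)$, leaving exactly the $1/36$ slack needed between the target $2\tilde n^2/9=8\tilde n^2/36$ from the minimum-degree bound on $w$ and the resulting bound $7\tilde n^2/36$ from the $\mathcal{G}^2_6$-structure.
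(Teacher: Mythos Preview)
Your argument is correct and follows essentially the same line as the paper: assume $w\in W'\setminus P_u$ is non-adjacent to $C_v$, deduce $L(w)\subset\widetilde{G}_k$, and derive a contradiction with the minimum-degree bound of Lemma~\ref{lemma-low-bound-tilde-Hi}. The only difference is cosmetic: the paper fixes representative indices ($P_u=U_2$, $w\in U_1$, so $d_{G_1}(2)=2$) and uses the looser estimate $|P_u|<\tilde n/10$, whereas you parametrise over $d=d_{G_k}(j_u)\in\{2,3,4\}$ and obtain the uniform bound $|\widetilde{G}_k|\le\tfrac{16-d}{72}\tilde n^2+O(\epsilon^{1/4})\tilde n^2\le\tfrac{7}{36}\tilde n^2+O(\epsilon^{1/4})\tilde n^2$; both yield the same contradiction against $2\tilde n^2/9=8\tilde n^2/36$.
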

\begin{proof}[Proof of Claim~\ref{claim-if-Cv-large}]
We shall only prove the case $P_{u}=U_2$, since the arguments for other cases are similar.
First it follows from $|P_{u} \cup C_{v}| \le (1/6 + 10 \epsilon^{1/4})\tilde{n}$ that
$|P_{u}| <  (1/6 + 10 \epsilon^{1/4})\tilde{n} - \tilde{n}/12 < \tilde{n}/10$.
Let $w \in W' \setminus P_{u}$, and suppose that $w$ is not adjacent to any vertex in $C_{v}$.
We shall only prove that case $w \in U_1$, since the arguments for other cases are similar.

Since $\widetilde{\mathcal{H}}_{i}[W'] = \widetilde{\mathcal{H}}_{i+1}[W']$ and
$\widetilde{\mathcal{H}}_{i+1}$ is $\mathcal{G}^{2}_{6}$-colorable,
$L_{\widetilde{\mathcal{H}}_{i}}(w)[W'] \subset \widetilde{G}_{1}$.
On the other hand,
since $N_{\widetilde{\mathcal{H}}_{i}}(w) \cap C_v = \emptyset$,
we actually have $L_{\widetilde{\mathcal{H}}_{i}}(w) \subset \widetilde{G}_{1}$.
It follows from the definition of $\widetilde{G}_{1}$, Claim \ref{claim-P-size-2},
and $|U_2| = |P_{u}| < \tilde{n}/10$ that
\begin{align}
|L_{\widetilde{\mathcal{H}}_{i}}(w)|
\le |\widetilde{G}_{1}|
< 6 \left( {1}/{6} + 10 \epsilon^{\frac{1}{4}} \right)^2 \tilde{n}^2
+ 2\times \frac{\tilde{n}}{10}\left( {1}/{6} + 10 \epsilon^{\frac{1}{4}} \right) \tilde{n}
< \left( {2}/{9} - 10 {\epsilon}^{1/2} \right)\tilde{n}^2, \notag
\end{align}
which contradicts Lemma \ref{lemma-low-bound-tilde-Hi}.

Therefore, $w$ is adjacent to some vertex in $C_v$ (in $\widetilde{\mathcal{H}}_{i}$).
Since $C_{v}$ is an equivalence class in $\widetilde{\mathcal{H}}_{i}$, $w$ is adjacent to all vertices in $C_{v}$
(in $\widetilde{\mathcal{H}}_{i}$).
\end{proof}

\begin{figure}[htbp]
\centering
\begin{tikzpicture}[xscale=3,yscale=3]
\node (v) at (-1,0.5) {};
\fill (v) circle (0.02) node [left] {$v$};
\node (w1) at (-0.4,0.6) {};
\fill (w1) circle (0.02) node at (-0.4+0.12,0.6+0.05) {$w_1$};
\node (w2) at (-0.5,0.4) {};
\fill (w2) circle (0.02) node at (-0.6,0.5) {$w_1'$};
\node (w3) at (0.5,0.5) {};
\fill (w3) circle (0.02) node [above] {$w_2$};
\node (w4) at (-1,0) {};
\fill (w4) circle (0.02) node [left] {$w_6$};
\node (w5) at (1,0) {};
\fill (w5) circle (0.02) node [right] {$w_3$};
\node (w6) at (0.5,-0.5) {};
\fill (w6) circle (0.02) node [below] {$w_4$};
\node (w7) at (-0.5,-0.5) {};
\fill (w7) circle (0.02) node [below] {$w_5$};

\draw[line width=0.5pt] (-1,0.5) -- (-0.5,0.4);
\draw[line width=0.5pt] (-1,0.5) -- (-0.4,0.6);

\draw[line width=0.5pt] (-0.4,0.6) -- (-0.5,0.4);
\draw[line width=0.5pt] (-0.4,0.6) -- (0.5,0.5);
\draw[line width=0.5pt] (-0.4,0.6) -- (-0.5,-0.5);
\draw[line width=0.5pt] (-0.4,0.6) -- (0.5,-0.5);
\draw[line width=0.5pt] (-0.4,0.6) -- (1,0);
\draw[line width=0.5pt] (-0.4,0.6) -- (-1,0);
\draw[line width=0.5pt] (-0.5,0.4) -- (0.5,0.5);
\draw[line width=0.5pt] (-0.5,0.4) -- (-0.5,-0.5);
\draw[line width=0.5pt] (-0.5,0.4) -- (0.5,-0.5);
\draw[line width=0.5pt] (-0.5,0.4) -- (1,0);
\draw[line width=0.5pt] (-0.5,0.4) -- (-1,0);
\draw[line width=0.5pt] (0.5,0.5) -- (-0.5,-0.5);
\draw[line width=0.5pt] (0.5,0.5) -- (0.5,-0.5);
\draw[line width=0.5pt] (0.5,0.5) -- (1,0);
\draw[line width=0.5pt] (0.5,0.5) -- (-1,0);
\draw[line width=0.5pt] (-0.5,-0.5) -- (0.5,-0.5);
\draw[line width=0.5pt] (-0.5,-0.5) -- (1,0);
\draw[line width=0.5pt] (-0.5,-0.5) -- (-1,0);
\draw[line width=0.5pt] (0.5,-0.5) -- (1,0);
\draw[line width=0.5pt] (0.5,-0.5) -- (-1,0);
\draw[line width=0.5pt] (1,0) -- (-1,0);
    \draw[line width=0.8pt] (0.5,0.5) circle [radius = 0.2];
    \draw[line width=0.8pt] (0.5,-0.5) circle [radius = 0.2];
    \draw[line width=0.8pt] (-0.5,0.5) circle [radius = 0.2];
    \draw[line width=0.8pt] (-0.5,-0.5) circle [radius = 0.2];
    \draw[line width=0.8pt] (1,0) circle [radius = 0.2];
    \draw[line width=0.8pt] (-1,0) circle [radius = 0.2];
    \node at (-0.5,0.65+0.15) {$U_1$};
    \node at (0.5,0.65+0.15) {$U_2$};
    \node at (0.5,-0.65-0.15) {$U_4$};
    \node at (-0.5,-0.65-0.15) {$U_5$};
    \node at (-1.15-0.15,0) {$U_6$};
    \node at (1.15+0.15,0) {$U_3$};
\end{tikzpicture}
\caption{The $3$-graph
$F = \widetilde{\mathcal{H}}_i[\{w_1,w_2,\ldots,w_6\}] \cup \widetilde{\mathcal{H}}_i[\{w_1',w_2,\ldots,w_6\}] \cup \{vw_1w_1'\}$
is a member in $M_2$ with core $\{w_1,w_1', w_2,\ldots,w_6\}$.
In particular, $\tau(\{w_1w_3w_4,w_1'w_5w_6\}) > 1$.}
\label{figure:no-link-inside-Uj-1}
\end{figure}
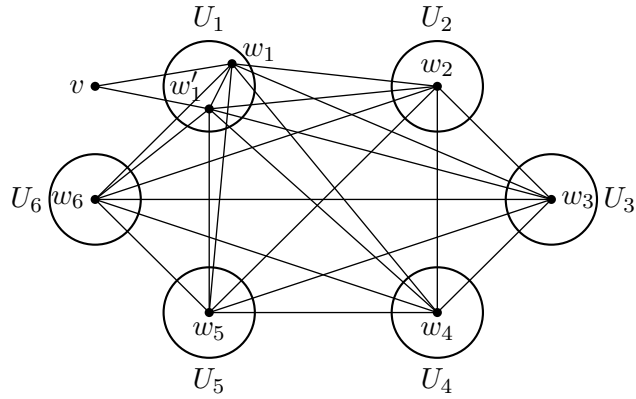

\begin{claim}\label{claim-Lv[S]-empty}
We have $L(v)[A] = \emptyset$ for every set $A \in \mathcal{P}'$.
\end{claim}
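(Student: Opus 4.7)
The plan is to argue by contradiction: suppose some $A \in \mathcal{P}'$ contains two vertices $w_1, w_1'$ with $vw_1w_1' \in \widetilde{\mathcal{H}}_i$, and produce a copy of a member of $M_2$ inside $\widetilde{\mathcal{H}}_i \subseteq \mathcal{H}$, contradicting $\mathcal{M}$-freeness. The automorphism group of $\mathcal{G}^{2}_{6}$ has two orbits on parts, namely $\{1,2,4,5\}$ and $\{3,6\}$, so up to relabeling only two cases arise; I would focus on $A = U_1$, since the case $A = U_3$ is entirely symmetric with identical numerics (the link graph and edge counts inside the remaining five parts agree up to isomorphism).

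The key step is to embed the blowup $\widehat{\mathcal{G}}$ exactly on a seven-vertex set $\{w_1, w_1', w_2, \ldots, w_6\}$ with $w_j \in U_j$. To locate these representatives I would apply Lemma~\ref{LEMMA:greedily-embedding-Gi} with $T = \{2,3,4,5,6\}$, $S = \{w_1, w_1'\}$, and $\eta$ a small polynomial in $\epsilon$. The three hypotheses translate into: lower bounds on $|U_j|$ from Claim~\ref{claim-P-size-2}; the triple-density condition from item (a) noted after Claim~\ref{claim-size-MU(w)-MV(w')} (which in turn uses Lemma~\ref{lemma-low-bound-tilde-Hi} together with $\widetilde{\mathcal{H}}_i[W'] \subseteq \widehat{\mathcal{G}}$); and the link-density condition from item (b) there, itself a consequence of Claim~\ref{claim-size-MU(w)-MV(w')}.

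Having selected $w_2, \ldots, w_6$, I would set
\[
F = \widetilde{\mathcal{H}}_i[\{w_1, w_2, \ldots, w_6\}] \cup \widetilde{\mathcal{H}}_i[\{w_1', w_2, \ldots, w_6\}] \cup \{vw_1w_1'\}
\]
and verify that $F \in M_2$ with core $S = \{w_1, w_1', w_2, \ldots, w_6\}$. Every pair in $S$ is covered by an edge of $F$: the central pair $\{w_1, w_1'\}$ by the edge $vw_1w_1'$; every pair through $w_1$ or $w_1'$ by a blowup edge, since the link $G_1$ of vertex~$1$ in $\mathcal{G}^{2}_{6}$ has no isolated vertex; and every pair inside $\{w_2, \ldots, w_6\}$ by a blowup edge, since every two-element subset of $\{2, \ldots, 6\}$ lies in some triple of $\mathcal{G}^{2}_{6}$. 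To see $\tau(F[S]) \ge 2$, note that $F[S]$ contains the eight blowup edges coming from triples of $\mathcal{G}^{2}_{6}$ entirely inside $\{2, \ldots, 6\}$, so neither $w_1$ nor $w_1'$ is a transversal; and $G_1$ has eight edges but maximum degree four, so each $w_j$ with $j \ge 2$ fails to cover at least four edges of $F[S]$ passing through $w_1$. Hence $F \in M_2 \subseteq \mathcal{M}$, completing the contradiction.

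The hard part will be guaranteeing that every $U_j$ is big enough to feed into Lemma~\ref{LEMMA:greedily-embedding-Gi}: the part $U_{j_0}$ containing $C_v$ can in principle shrink to nearly zero when $|C_v|$ approaches $\tilde n/6$. I plan to dispose of this degenerate case via a split on $|C_v|$. When $|C_v| \le \tilde n/12$, each $U_j$ still retains size $\Omega(\tilde n)$ by Claim~\ref{claim-P-size-2} and the lemma applies directly. When $|C_v| > \tilde n/12$, Claim~\ref{claim-if-Cv-large} forces every vertex in $W' \setminus P_u$ to be adjacent to every vertex of $C_v$, which provides enough additional structure to reroute the embedding and still find a valid representative for the index playing the role of $j_0$.
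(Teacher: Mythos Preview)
Your proposal matches the paper's proof: same contradiction hypothesis, same use of Lemma~\ref{LEMMA:greedily-embedding-Gi} with $S=\{w_1,w_1'\}$ to locate $w_2,\ldots,w_6$, same $F$ with core $\{w_1,w_1',w_2,\ldots,w_6\}$, and same case split on the size of $C_v$. For the transversal check you can simplify considerably: just exhibit the disjoint edges $w_1w_3w_4$ and $w_1'w_5w_6$ in $F[S]$, which immediately gives $\tau(F[S])\ge 2$.

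The one place where you are too vague is the degenerate case $|C_v|>\tilde n/12$. The phrase ``reroute the embedding and still find a valid representative for the index playing the role of $j_0$'' does not work as stated: when $U_{j_0}=P_u$ is tiny you genuinely cannot select $w_{j_0}\in U_{j_0}$ via the embedding lemma, so the core must change. The paper's resolution (assuming $j_0\ne 1$; if $j_0=1$ there is no problem since you already have $w_1,w_1'\in U_1$ and the remaining five parts are large) is to drop index $j_0$ from $T$ entirely and place $v$ itself in the core, taking $S=\{v,w_1,w_1'\}\cup\{w_j:j\in[6]\setminus\{1,j_0\}\}$. The pairs $\{v,w_j\}$ are covered by edges $e_j$ supplied by Claim~\ref{claim-if-Cv-large}, the pairs $\{v,w_1\},\{v,w_1'\}$ by $vw_1w_1'$, and the remaining pairs by the fact that $\mathcal{G}^2_6$ restricted to any five vertices is $2$-covered; two disjoint edges (e.g.\ $w_1w_3w_4$ and $w_1'w_5w_6$ when $j_0=2$) again witness $\tau(F[S])\ge 2$. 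This is presumably what you intend by ``reroute'', but it should be made explicit.
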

\begin{proof}[Proof of Claim~\ref{claim-Lv[S]-empty}]
Suppose to the contrary that there exists an edge $w_1w_1' \in L_{\widetilde{\mathcal{H}}_i}(v)[A]$ for some $A \in \mathcal{P}'$.
We shall only prove the case $A = U_1$, since the arguments for other cases are similar.
It follows from Claim \ref{claim-N(w)-cap-P} that
\begin{align}\label{equ-N(w1)-cap-N(w2)-cap-W'}
|N(w_1) \cap N(w_1') \cap (W'\setminus U_1)| > |W' \setminus U_1| - 800 \epsilon^{1/4}\tilde{n}.
\end{align}

Suppose that $|W' \setminus U_1| > 11\tilde{n}/15$.
Then by Claim \ref{claim-P-size-2},
$|U_{j}| \ge 11\tilde{n}/15 - 4(1/6+20\epsilon^{1/4})\tilde{n}> \tilde{n}/20$ for every $j\in [2,6]$.
Applying Lemma~\ref{LEMMA:greedily-embedding-Gi} with $S = \{w_1,w_1'\}$, $T = [2,6]$, and $\eta = 30\epsilon^{1/4}$
we obtain $w_j\in U_j$ for $j\in[2,6]$ (see Figure~\ref{figure:no-link-inside-Uj-1}) such that
the induced subgraphs of $\widetilde{\mathcal{H}}_i$ on sets $\{w_1,w_2,\ldots,w_6\}$ and $\{w_1',w_2,\ldots,w_6\}$
are isomorphic to $\mathcal{G}_{6}^{2}$.
Let $F = \widetilde{\mathcal{H}}_i[\{w_1,w_2,\ldots,w_6\}] \cup \widetilde{\mathcal{H}}_i[\{w_1',w_2,\ldots,w_6\}] \cup \{vw_1w_1'\}$.
Then it is easy to see that $F\in M_2$ with core $\{w_1,w_1',w_2,\ldots,w_6\}$ (see Figure~\ref{figure:no-link-inside-Uj-1}), a contradiction.

\begin{figure}[htbp]
\centering
\begin{tikzpicture}[xscale=3,yscale=3]
\node (v) at (-0.3,0.1) {};
\fill (v) circle (0.02) node [below] {$v$};
\node (w1) at (-0.4,0.6) {};
\fill (w1) circle (0.02) node at (-0.4+0.12,0.6+0.05) {$w_1$};
\node (w2) at (-0.5,0.4) {};
\fill (w2) circle (0.02) node at (-0.6,0.5) {$w_1'$};
\node (w4) at (-1,0) {};
\fill (w4) circle (0.02) node [left] {$w_6$};
\node (w5) at (1,0) {};
\fill (w5) circle (0.02) node [right] {$w_3$};
\node (w6) at (0.5,-0.5) {};
\fill (w6) circle (0.02) node [below] {$w_4$};
\node (w7) at (-0.5,-0.5) {};
\fill (w7) circle (0.02) node [below] {$w_5$};

\draw[line width=0.5pt] (-0.3,0.1) -- (-0.5,0.4);
\draw[line width=0.5pt] (-0.3,0.1) -- (-0.4,0.6);

\draw[line width=0.5pt] (-0.4,0.6) -- (-0.5,0.4);
\draw[line width=0.5pt] (-0.4,0.6) -- (-0.3,0.1);
\draw[line width=0.5pt] (-0.4,0.6) -- (-0.5,-0.5);
\draw[line width=0.5pt] (-0.4,0.6) -- (0.5,-0.5);
\draw[line width=0.5pt] (-0.4,0.6) -- (1,0);
\draw[line width=0.5pt] (-0.4,0.6) -- (-1,0);
\draw[line width=0.5pt] (-0.5,0.4) -- (-0.3,0.1);
\draw[line width=0.5pt] (-0.5,0.4) -- (-0.5,-0.5);
\draw[line width=0.5pt] (-0.5,0.4) -- (0.5,-0.5);
\draw[line width=0.5pt] (-0.5,0.4) -- (1,0);
\draw[line width=0.5pt] (-0.5,0.4) -- (-1,0);
\draw[line width=0.5pt] (-0.3,0.1) -- (-0.5,-0.5);
\draw[line width=0.5pt] (-0.3,0.1) -- (0.5,-0.5);
\draw[line width=0.5pt] (-0.3,0.1) -- (1,0);
\draw[line width=0.5pt] (-0.3,0.1) -- (-1,0);
\draw[line width=0.5pt] (-0.5,-0.5) -- (0.5,-0.5);
\draw[line width=0.5pt] (-0.5,-0.5) -- (1,0);
\draw[line width=0.5pt] (-0.5,-0.5) -- (-1,0);
\draw[line width=0.5pt] (0.5,-0.5) -- (1,0);
\draw[line width=0.5pt] (0.5,-0.5) -- (-1,0);
\draw[line width=0.5pt] (1,0) -- (-1,0);
    \draw[line width=0.8pt] (0.5,0.5) circle [radius = 0.2];
    \draw[line width=0.8pt] (0.5,-0.5) circle [radius = 0.2];
    \draw[line width=0.8pt] (-0.5,0.5) circle [radius = 0.2];
    \draw[line width=0.8pt] (-0.5,-0.5) circle [radius = 0.2];
    \draw[line width=0.8pt] (1,0) circle [radius = 0.2];
    \draw[line width=0.8pt] (-1,0) circle [radius = 0.2];
    \node at (-0.5,0.65+0.15) {$U_1$};
    \node at (0.5,0.65+0.15) {$U_2$};
    \node at (0.5,-0.65-0.15) {$U_4$};
    \node at (-0.5,-0.65-0.15) {$U_5$};
    \node at (-1.15-0.15,0) {$U_6$};
    \node at (1.15+0.15,0) {$U_3$};
\end{tikzpicture}
\caption{The $3$-graph
$F = \widetilde{\mathcal{H}}_i[\{w_1,w_3,\ldots,w_6\}] \cup \widetilde{\mathcal{H}}_i[\{w_1',w_3,\ldots,w_6\}] \cup \{vw_1w_1'\}
\cup \{e_j\colon j\in[3,6]\}$
is a member in $M_2$ with core $\{v,w_1,w_1', w_3,\ldots,w_6\}$.
In particular, $\tau(\{w_1w_3w_4,w_1'w_5w_6\}) > 1$.}
\label{figure:no-link-inside-Uj-2}
\end{figure}

Suppose that $|W' \setminus U_1| \le 11\tilde{n}/15 \le 5(1/6 - 10 \epsilon^{1/4})\tilde{n}$.
Then by Claim \ref{claim-P-size-2}, $|C_v| \ge \tilde{n} - (1/6 + 10 \epsilon^{1/4})\tilde{n} - 11\tilde{n}/15 > \tilde{n}/12$ and $P_{u} \neq U_1$.
We shall only prove that case $P_{u} = U_2$, since the arguments for other cases are similar.
Applying Lemma~\ref{LEMMA:greedily-embedding-Gi} with $S = \{w_1,w_1'\}$, $T = [3,6]$, and $\eta = 30\epsilon^{1/4}$
we obtain $w_j \in U_j$ for $j\in[3,6]$ (see Figure~\ref{figure:no-link-inside-Uj-2})
such that the induced subgraphs of $\widetilde{\mathcal{H}}_i$ and $\widehat{\mathcal{G}}$
on the sets $\{w_1,w_3,\ldots,w_6\}$ and $\{w_1', w_3, \ldots, w_6\}$ are isomorphic (and they are all $2$-covered), respectively.
For $j\in [3,6]$ let $e_j \in \widetilde{\mathcal{H}}_i$ be an edge containing $v$ and $w_j$
(by Claim~\ref{claim-if-Cv-large}, $v$ is adjacent to $w_j$, so such $e_j$ exists).
Define $F = \widetilde{\mathcal{H}}_i[\{w_1,w_3,\ldots,w_6\}] \cup \widetilde{\mathcal{H}}_i[\{w_1',w_3,\ldots,w_6\}] \cup \{vw_1w_1'\}
\cup \{e_j\colon j\in[3,6]\}$.
Then it is easy to see that $F\in M_2$ with core $\{v,w_1,w_1',w_3,\ldots,w_6\}$ (see Figure~\ref{figure:no-link-inside-Uj-2}), a contradiction.
\end{proof}

\begin{claim}\label{claim-less-one-small-intersect}
There is at most one set $A \in \mathcal{P}'$ such that $|N(v) \cap A| < \tilde{n}/48$.
\end{claim}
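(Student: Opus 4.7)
The plan is to argue by contradiction: suppose there are distinct sets $A_1,A_2\in\mathcal{P}'$ with $|N(v)\cap A_j|<\tilde{n}/48$ for $j=1,2$, and derive a contradiction by double-counting $d(v)=|L(v)|$.

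The key preliminary observation is that $L(v)$ is supported on $W'$ and is $6$-partite with respect to $\mathcal{P}'$. Indeed, $C_v$ is an equivalence class of $\widetilde{\mathcal{H}}_i$, so $v$ is non-adjacent to every other vertex of $C_v$, which gives $N(v)\subseteq W'$. Combined with Claim~\ref{claim-Lv[S]-empty}, which ensures $L(v)$ has no edge lying inside any single $U_j$, this shows that every edge of $L(v)$ connects two distinct parts of $\mathcal{P}'$.

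Now split the edges of $L(v)$ into two groups: (i) those with at least one endpoint in $A_1\cup A_2$, and (ii) those entirely contained in the union of the remaining four parts. For (i), each such edge uses a vertex of $N(v)\cap(A_1\cup A_2)$, so the count is at most $(|N(v)\cap A_1|+|N(v)\cap A_2|)\,\tilde{n}<\tilde{n}^2/24$. For (ii), the four remaining parts each have size at most $(1/6+20\epsilon^{1/4})\tilde{n}$ by Claim~\ref{claim-P-size-2}, so (ii) is bounded by
\[
\binom{4}{2}\left(\tfrac{1}{6}+20\epsilon^{1/4}\right)^{2}\tilde{n}^{2}\le \left(\tfrac{1}{6}+O(\epsilon^{1/4})\right)\tilde{n}^{2}.
\]
Adding these bounds yields $d(v)\le (5/24+O(\epsilon^{1/4}))\tilde{n}^{2}$. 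On the other hand, Lemma~\ref{lemma-low-bound-tilde-Hi} gives $d(v)\ge (4/9-10\epsilon^{1/2})\binom{\tilde{n}-1}{2}\ge (2/9-O(\epsilon^{1/2}))\tilde{n}^{2}$. Since $2/9=16/72>15/72=5/24$ and $\epsilon$ is sufficiently small, this is a contradiction.

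I do not expect any significant obstacle: the argument is a clean counting estimate once the two structural inputs (Claim~\ref{claim-Lv[S]-empty} and Claim~\ref{claim-P-size-2}) are in hand. The only subtlety is to apply Claim~\ref{claim-Lv[S]-empty} with the correct interpretation, namely that $L(v)$ contains no edge with both endpoints in a common $U_j$, and to remember that $v\notin W'$ so $L(v)$ lives entirely on $W\setminus C_v$. The threshold $\tilde{n}/48$ is chosen precisely so that the slack $2/9-5/24=1/72$ absorbs all $\epsilon$-order error terms.
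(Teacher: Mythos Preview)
Your proposal is correct and follows essentially the same approach as the paper: assume two parts meet $N(v)$ in fewer than $\tilde n/48$ vertices, use Claim~\ref{claim-Lv[S]-empty} to see that $L(v)$ is $6$-partite on $\mathcal{P}'$, bound $|L(v)|$ by summing over the at most $\binom{6}{2}$ bipartite pieces, and contradict the minimum-degree bound from Lemma~\ref{lemma-low-bound-tilde-Hi}. The only cosmetic difference is the bookkeeping: the paper bounds the three types of pairs (large--large, large--small, small--small) separately, while you lump all edges touching $A_1\cup A_2$ into a single term $(|N(v)\cap A_1|+|N(v)\cap A_2|)\tilde n$; both estimates land comfortably below $2/9$.
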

\begin{proof}[Proof of Claim \ref{claim-less-one-small-intersect}]
Let $U'_j = N(v) \cap U_j$ for $j\in[6]$.
By Claim \ref{claim-Lv[S]-empty}, $L(v)$ is a $6$-partite graph (not necessarily complete)
with the set of parts $\mathcal{P}'' := \{U'_1,U'_2,U'_3,U'_4,V'_1,V'_2\}$.
Suppose to the contrary that there are at least two sets in $\mathcal{P}''$ that have size at most $\tilde{n}/48$.
Then, by Claim \ref{claim-P-size-2},
\[
|L(v)|
 \le 6 \left( {1}/{6} + 10 \epsilon^{{1}/{4}} \right)^2 \tilde{n}^2 + \left( {\tilde{n}}/{48} \right)^{2}
+ 8 \times {\tilde{n}}/{48} \times \left( {1}/{6} + 10 \epsilon^{{1}/{4}} \right) \tilde{n}
 < \left({2}/{9} -10 {\epsilon}^{1/2} \right)\tilde{n}^2,
\]
which contradicts Lemma \ref{lemma-low-bound-tilde-Hi}.
\end{proof}

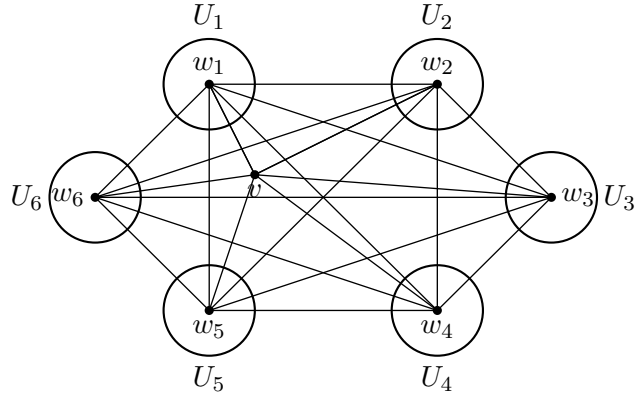
\begin{figure}[htbp]
\centering
\begin{tikzpicture}[xscale=3,yscale=3]
\node (v) at (-0.3,0.1) {};
\fill (v) circle (0.02) node [below] {$v$};
\node (w1) at (-0.5,0.5) {};
\fill (w1) circle (0.02) node [above] {$w_1$};
\node (w2) at (0.5,0.5) {};
\fill (w2) circle (0.02) node [above] {$w_2$};
\node (w4) at (-1,0) {};
\fill (w4) circle (0.02) node [left] {$w_6$};
\node (w5) at (1,0) {};
\fill (w5) circle (0.02) node [right] {$w_3$};
\node (w6) at (0.5,-0.5) {};
\fill (w6) circle (0.02) node [below] {$w_4$};
\node (w7) at (-0.5,-0.5) {};
\fill (w7) circle (0.02) node [below] {$w_5$};

\draw[line width=0.5pt] (-0.3,0.1) -- (-0.5,0.5);
\draw[line width=0.5pt] (-0.3,0.1) -- (0.5,0.5);

\draw[line width=0.5pt] (0.5,0.5) -- (-0.5,0.5);
\draw[line width=0.5pt] (0.5,0.5) -- (-0.3,0.1);
\draw[line width=0.5pt] (0.5,0.5) -- (-0.5,-0.5);
\draw[line width=0.5pt] (0.5,0.5) -- (0.5,-0.5);
\draw[line width=0.5pt] (0.5,0.5) -- (1,0);
\draw[line width=0.5pt] (0.5,0.5) -- (-1,0);
\draw[line width=0.5pt] (-0.5,0.5) -- (-0.3,0.1);
\draw[line width=0.5pt] (-0.5,0.5) -- (-0.5,-0.5);
\draw[line width=0.5pt] (-0.5,0.5) -- (0.5,-0.5);
\draw[line width=0.5pt] (-0.5,0.5) -- (1,0);
\draw[line width=0.5pt] (-0.5,0.5) -- (-1,0);
\draw[line width=0.5pt] (-0.3,0.1) -- (-0.5,-0.5);
\draw[line width=0.5pt] (-0.3,0.1) -- (0.5,-0.5);
\draw[line width=0.5pt] (-0.3,0.1) -- (1,0);
\draw[line width=0.5pt] (-0.3,0.1) -- (-1,0);
\draw[line width=0.5pt] (-0.5,-0.5) -- (0.5,-0.5);
\draw[line width=0.5pt] (-0.5,-0.5) -- (1,0);
\draw[line width=0.5pt] (-0.5,-0.5) -- (-1,0);
\draw[line width=0.5pt] (0.5,-0.5) -- (1,0);
\draw[line width=0.5pt] (0.5,-0.5) -- (-1,0);
\draw[line width=0.5pt] (1,0) -- (-1,0);
    \draw[line width=0.8pt] (0.5,0.5) circle [radius = 0.2];
    \draw[line width=0.8pt] (0.5,-0.5) circle [radius = 0.2];
    \draw[line width=0.8pt] (-0.5,0.5) circle [radius = 0.2];
    \draw[line width=0.8pt] (-0.5,-0.5) circle [radius = 0.2];
    \draw[line width=0.8pt] (1,0) circle [radius = 0.2];
    \draw[line width=0.8pt] (-1,0) circle [radius = 0.2];
    \node at (-0.5,0.65+0.15) {$U_1$};
    \node at (0.5,0.65+0.15) {$U_2$};
    \node at (0.5,-0.65-0.15) {$U_4$};
    \node at (-0.5,-0.65-0.15) {$U_5$};
    \node at (-1.15-0.15,0) {$U_6$};
    \node at (1.15+0.15,0) {$U_3$};
\end{tikzpicture}
\caption{The $3$-graph $F = \widetilde{\mathcal{H}}_i[\{w_1,\ldots,w_6\}] \cup \{e_j\colon j\in[6]\}$ is
a member in $M_2$ with core $\{v,w_1,\ldots,w_6\}$.
In particular, $\tau(\{w_1w_3w_4,w_2w_5w_6\}) > 1$.}
\label{figure:v-has-no-neighbor-in-some-Uj}
\end{figure}

\begin{claim}\label{claim-N(v)-cap-S-empty}
There exists a set $A \in \mathcal{P}'$ such that $N(v) \cap A = \emptyset$.
\end{claim}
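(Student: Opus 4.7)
The plan is to argue by contradiction: assume $N(v)\cap A\neq \emptyset$ for every $A\in \mathcal{P}'$, set $\tilde{U}_j = N(v)\cap U_j$ for $j\in[6]$, and try to construct a copy of a member of $M_2$ inside $\widetilde{\mathcal{H}}_i$. The target configuration is exactly the one depicted in Figure~\ref{figure:v-has-no-neighbor-in-some-Uj}: six vertices $w_1,\ldots,w_6$, one from each $\tilde{U}_j$, spanning a copy of $\mathcal{G}^2_6$, together with $v$ and six edges $e_j$ through each pair $\{v,w_j\}$.

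The main step is to produce the $w_j$'s via the greedy embedding lemma. By Claim~\ref{claim-less-one-small-intersect}, at most one of the $\tilde{U}_j$'s can have size below $\tilde{n}/48$, so after relabeling one may assume $|\tilde{U}_j|\ge \tilde{n}/48$ for $j\in[2,6]$. Pick any $w_1\in\tilde{U}_1$ and apply Lemma~\ref{LEMMA:greedily-embedding-Gi} with $\mathcal{H}$ equal to the restriction of $\widetilde{\mathcal{H}}_i$ to $\{w_1\}\cup \bigcup_{j\ge 2}\tilde{U}_j$, the natural partition $\{w_1\},\tilde{U}_2,\ldots,\tilde{U}_6$, the blowup $\widehat{\mathcal{G}}=\mathcal{G}^2_6[\{w_1\},\tilde{U}_2,\ldots,\tilde{U}_6]$, $S=\{w_1\}$, $T=[2,6]$, and $\eta=30\epsilon^{1/4}$. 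Condition~(a) of the lemma is immediate from $|\tilde{U}_j|\ge \tilde{n}/48$; condition~(b) uses $\widetilde{\mathcal{H}}_i[W']\subseteq \mathcal{G}^2_6[U_1,\ldots,U_6]$ together with Lemma~\ref{lemma-low-bound-tilde-Hi}, already recorded right after Claim~\ref{claim-size-MU(w)-MV(w')} as statement~(a); condition~(c) is statement~(b) of that same paragraph (equivalently Claim~\ref{claim-size-MU(w)-MV(w')} applied to $w_1$). The lemma then outputs $w_j\in\tilde{U}_j$ for $j\in[2,6]$ such that $\widetilde{\mathcal{H}}_i[\{w_1,\ldots,w_6\}]$ is a copy of $\mathcal{G}^2_6$.

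Once the $w_j$'s are in place, for each $j\in[6]$ pick an edge $e_j\in\widetilde{\mathcal{H}}_i$ containing $\{v,w_j\}$ (which exists since $w_j\in N(v)$) and set $F=\widetilde{\mathcal{H}}_i[\{w_1,\ldots,w_6\}]\cup\{e_1,\ldots,e_6\}$. Because $\mathcal{G}^2_6$ is $2$-covered, every pair in $\{v,w_1,\ldots,w_6\}$ is covered by an edge of $F$ (the pairs $\{w_j,w_k\}$ by the $\mathcal{G}^2_6$-edges and the pairs $\{v,w_j\}$ by $e_j$), so this $7$-set can serve as a core; and $|V(F)|\le 7+6=13\le 7+\binom{7}{2}$, so $F\in \mathcal{K}_7^3$. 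The disjoint edges $\{w_1,w_3,w_4\}$ and $\{w_2,w_5,w_6\}$ of $\mathcal{G}^2_6$ show $\tau(F[\mathrm{core}])\ge 2$, hence $F\in M_2$, contradicting the $\mathcal{M}$-freeness of $\widetilde{\mathcal{H}}_i$.

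The main technical subtlety is precisely the embedding step: a naive application of the lemma to all six parts simultaneously fails because one of the sets $\tilde{U}_j$ might be tiny. This is why I pre-select $w_1$ from the potentially small part and use the lemma on the remaining five parts only, absorbing the constraint ``$w_1$ is adjacent to all of the new vertices via the right $\mathcal{G}^2_6$-pattern'' into condition~(c) with $S=\{w_1\}$.
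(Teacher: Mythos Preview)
Your proposal is correct and follows essentially the same approach as the paper: both argue by contradiction, invoke Claim~\ref{claim-less-one-small-intersect} to ensure five of the six sets $N(v)\cap U_j$ are large, fix $w_1$ in the possibly small part, apply Lemma~\ref{LEMMA:greedily-embedding-Gi} with $S=\{w_1\}$, $T=[2,6]$, $\eta=30\epsilon^{1/4}$ to produce $w_2,\ldots,w_6$ spanning a copy of $\mathcal{G}_6^2$, and then exhibit the member of $M_2$ with core $\{v,w_1,\ldots,w_6\}$ using the disjoint edges $w_1w_3w_4$ and $w_2w_5w_6$. Your write-up is in fact slightly more careful than the paper's (which has the roles of $S$ and $T$ swapped in its invocation of the lemma) in explaining why the embedding lemma applies to the restricted parts $\tilde U_j$ and why pre-selecting $w_1$ is necessary.
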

\begin{proof}[Proof of Claim \ref{claim-N(v)-cap-S-empty}]
Suppose to the contrary that every set $A\in \mathcal{P}'$ satisfies $A \cap N(v) \neq \emptyset$.
By Claim \ref{claim-less-one-small-intersect},
there are at least five sets $A' \in \mathcal{P}'$ with $|A' \cap N(v)| \ge \tilde{n}/48$.
We shall only prove the case that
every set $A' \in \mathcal{P}'\setminus\{U_1\}$ satisfies $|A' \cap N(v)| \ge \tilde{n}/48$,
since the arguments for other cases are similar.

Fix a vertex $w_1 \in N(v) \cap U_1$.
Let $U_j' = U_j \cap N(v)$ for $i\in [2,6]$.
By assumption, $|U_j'| \ge \tilde{n}/48$ for $j\in[2,6]$.
So applying Lemma~\ref{LEMMA:greedily-embedding-Gi} with $T = \{w_1\}$, $S= [2,6]$, and $\eta = 30\epsilon^{1/4}$
we obtain $w_j \in U_j'$ for $j\in [2,6]$ (see Figure~\ref{figure:v-has-no-neighbor-in-some-Uj})
such that the induced subgraph of $\widetilde{\mathcal{H}}_i$
on $\{w_1,\ldots,w_6\}$ is isomorphic to $\mathcal{G}_6^2$.
For $j\in [6]$ let $e_j \in \widetilde{\mathcal{H}}_i$ be an edge containing $v$ and $w_j$.
Define $F = \widetilde{\mathcal{H}}_i[\{w_1,\ldots,w_6\}] \cup \{e_j\colon j\in[6]\}$.
Then it is easy to see that $F$ is a member in $M_2$ with core $\{v,w_1,\ldots,w_6\}$ (see Figure~\ref{figure:v-has-no-neighbor-in-some-Uj}),
a contradiction.
\end{proof}

Our next step is to show that $\widetilde{\mathcal{H}}_{i}$ is $\mathcal{G}_{6}^{2}$-colorable with
the sets of parts $\widetilde{\mathcal{P}}$,
where $\widetilde{\mathcal{P}}$ is obtained from $\mathcal{P}'$ by replacing $A$ with $A \cup C_v$
and the set $A$ is guaranteed by Claim \ref{claim-N(v)-cap-S-empty}.
We shall only prove the case $A = U_1$, since the arguments for other cases are similar.

Let
\begin{align}
B_{v} = \left\{ ww' \in L(v): ww' \not\in \widetilde{G}_{1} \right\}, \quad{\rm and}\quad
M_{v} = \left\{ ww' \in \widetilde{G}_{1}: ww' \not\in L(v) \right\}. \notag
\end{align}
Members in $B_v$ are called bad edges of $L(v)$ and members in $M_v$ are called missing edges of $L(v)$.

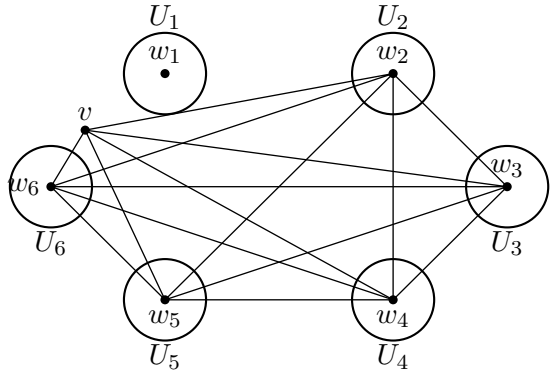
\begin{figure}[htbp]
\centering
\begin{tikzpicture}[xscale=3,yscale=3]
    \node (v) at (-0.85,0.25) {};
    \node (w6) at (-1,0) {};
    \node (w2) at (0.5,0.5) {};
    \node (w3) at (1,0) {};
    \node (w4) at (0.5,-0.5) {};
    \node (w5) at (-0.5,-0.5) {};
    \node (w1) at (-0.5,0.5) {};

    \fill (v) circle (0.02) node [above] {$v$};
    \fill (w6) circle (0.02) node [left] {$w_6$};
    \fill (w2) circle (0.02) node [above] {$w_2$};
    \fill (w3) circle (0.02) node [above] {$w_3$};
    \fill (w4) circle (0.02) node [below] {$w_4$};
    \fill (w5) circle (0.02) node [below] {$w_5$};
    \fill (w1) circle (0.02) node [above] {$w_1$};

    \draw[line width=0.8pt] (0.5,0.5) circle [radius = 0.18];
    \draw[line width=0.8pt] (0.5,-0.5) circle [radius = 0.18];
    \draw[line width=0.8pt] (-0.5,0.5) circle [radius = 0.18];
    \draw[line width=0.8pt] (-0.5,-0.5) circle [radius = 0.18];
    \draw[line width=0.8pt] (1,0) circle [radius = 0.18];
    \draw[line width=0.8pt] (-1,0) circle [radius = 0.18];

    \draw[line width=0.5pt] (-0.85, 0.25) -- (0.5,0.5) -- (-1,0) -- (-0.85,0.25);
    \draw[line width=0.5pt] (1,0) -- (0.5,-0.5) -- (-0.5,-0.5) -- (1,0);
    \draw[line width=0.5pt] (-0.85, 0.25) -- (1,0);
    \draw[line width=0.5pt] (-0.85, 0.25) -- (0.5,-0.5);
    \draw[line width=0.5pt] (-0.85, 0.25) -- (-0.5,-0.5);
    \draw[line width=0.5pt] (0.5, 0.5) -- (1,0);
    \draw[line width=0.5pt] (0.5, 0.5) -- (0.5,-0.5);
    \draw[line width=0.5pt] (0.5, 0.5) -- (-0.5,-0.5);
    \draw[line width=0.5pt] (-1,0) -- (1,0);
    \draw[line width=0.5pt] (-1,0) -- (0.5,-0.5);
    \draw[line width=0.5pt] (-1,0) -- (-0.5,-0.5);

    \node at (-0.5,0.65+0.1) {$U_1$};
    \node at (0.5,0.65+0.1) {$U_2$};
    \node at (0.5,-0.65-0.1) {$U_4$};
    \node at (-0.5,-0.65-0.1) {$U_5$};
    \node at (-1,0-0.25) {$U_6$};
    \node at (1,0-0.25) {$U_3$};
\end{tikzpicture}
\caption{The $3$-graph
$F = \widetilde{\mathcal{H}}_i[\{w_1,\ldots,w_6\}] \cup \{e_j\colon j \in\{4,5,6\}\} \cup \{vw_2w_3\}$
is a member in $M_3$ with core $\{v,w_1,\ldots,w_5\}$.}
\label{figure:Bv-small}
\end{figure}

\begin{claim}\label{CLAIM:bad-edges-are-few}
We have $|B_v| < 300\epsilon^{1/12} \tilde{n}^2$.
\end{claim}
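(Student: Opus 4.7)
I assume for contradiction that $|B_v|\ge 300\epsilon^{1/12}\tilde{n}^2$. Because $N(v)\cap U_1=\emptyset$ by Claim~\ref{claim-N(v)-cap-S-empty} (taking $A=U_1$) and $C_v$ is an equivalence class, every edge of $L(v)$ lies in $\bigcup_{j=2}^{6}U_j$. Reading off the two non-edges $\{a_2,a_3\}$ and $\{a_2,a_6\}$ of the $5$-vertex graph $G_1$, a bad pair of $L(v)$ must sit inside a single $U_j$ for some $j\in\{2,\dots,6\}$, cross $U_2\times U_3$, or cross $U_2\times U_6$. By pigeonhole one of these seven possibilities accounts for at least $40\epsilon^{1/12}\tilde{n}^2$ bad edges; I describe the representative case in which this happens across $U_2\times U_3$ (the $U_2\times U_6$ crossings being symmetric and the within-part configurations producing a size-$7$ core landing in $M_2$ instead of $M_3$).

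Double-counting against Claim~\ref{claim-size-MU(w)-MV(w')} lets me fix such a bad edge $w_2w_3\in L(v)$ with $w_2\in U_2$, $w_3\in U_3$ where both endpoints are ``typical'' (their links miss at most $30\epsilon^{1/4}\tilde{n}^2$ edges of the model blowup $\widehat{\mathcal{G}}$). Combining Claims~\ref{claim-less-one-small-intersect} and~\ref{claim-N(v)-cap-S-empty} yields $|N(v)\cap U_j|\ge \tilde{n}/48$ for every $j\in\{2,\dots,6\}$. I now apply Lemma~\ref{LEMMA:greedily-embedding-Gi} with $\mathcal{G}=\mathcal{G}_6^2$, $S=\{v,w_2,w_3\}$, $T=\{1,4,5,6\}$, $\eta=30\epsilon^{1/4}$, $V_1=U_1$, and $V_j=N(v)\cap U_j$ for $j\in\{4,5,6\}$; hypotheses (a)--(c) follow from Claim~\ref{claim-P-size-2}, Claim~\ref{claim-size-MU(w)-MV(w')}, and the typicality of $w_2,w_3$. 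The output is $w_1\in U_1$ and $w_j\in N(v)\cap U_j$ for $j\in\{4,5,6\}$ satisfying $\widetilde{\mathcal{H}}_i[\{w_1,\dots,w_6\}]=\mathcal{G}_6^2[\{w_1,\dots,w_6\}]$. For $j\in\{4,5,6\}$ pick any $e_j\in \widetilde{\mathcal{H}}_i$ containing $\{v,w_j\}$, which exists because $w_j\in N(v)$.

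Set $F=\widetilde{\mathcal{H}}_i[\{w_1,\dots,w_6\}]\cup\{vw_2w_3\}\cup\{e_4,e_5,e_6\}$; I claim that $F\in M_3$ with core $\{v,w_2,w_3,w_4,w_5,w_6\}$. All $15$ pairs of the core are covered: the ten $\{w_i,w_j\}$-pairs by the $2$-covered subgraph $\mathcal{G}_6^2$, the pairs $vw_2,vw_3$ by $vw_2w_3$, and $vw_j$ for $j\in\{4,5,6\}$ by $e_j$, so $F\in\mathcal{K}_6^3$. To rule out $F\subset\mathcal{G}_n^2$ for any $n$: any embedding of the $2$-covered induced copy of $\mathcal{G}_6^2$ sends $w_1,\dots,w_6$ to distinct color classes, and after composing with an automorphism we may assume $w_i\mapsto i$; then $c(v)\in\{2,3\}$ would place two vertices of $vw_2w_3$ in a single color class (forbidden in a blowup), $c(v)=1$ would realize the non-edge $\{1,2,3\}$ on $vw_2w_3$, and $c(v)=j$ with $j\in\{4,5,6\}$ would likewise violate the blowup structure on $e_j$; thus no valid colour in $[6]$ remains. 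To rule out $F\subset\mathcal{G}_n^1$ for any $n$: any embedding forces $T:=\{i\in[6]:w_i\in A\}$ to meet each edge of $\mathcal{G}_6^2$ in exactly one vertex, but checking against the non-edges $\{123,126,345,456\}$ reveals that no $T\subseteq[6]$ of any cardinality does so. Hence $F\in M_3\subseteq \mathcal{M}$, contradicting $\mathcal{M}$-freeness of $\widetilde{\mathcal{H}}_i$.

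The main obstacle is balancing the constraints in the greedy embedding: hypothesis (a) of Lemma~\ref{LEMMA:greedily-embedding-Gi} demands $|V_j|\gtrsim \eta^{1/3}\tilde{n}$ while the missing-edge budget scales as $\eta=\Theta(\epsilon^{1/4})$, and the cube-root forces the exponent $\tfrac14\cdot\tfrac13=\tfrac1{12}$ that appears in the bound $|B_v|<300\epsilon^{1/12}\tilde{n}^2$. A secondary subtlety is the within-part case of Step~1, where the natural construction enlarges the core to $7$ vertices and requires verification that the resulting $F$ lies in $M_2$ (in particular, that the restriction of $F$ to its core has transversal number at least two).
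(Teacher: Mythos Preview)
Your overall strategy matches the paper's: assume $|B_v|$ is large, locate a bad pair $w_2w_3$ crossing $U_2\times U_3$ (or $U_2\times U_6$), embed a copy of $\mathcal{G}_6^2$ around it via Lemma~\ref{LEMMA:greedily-embedding-Gi}, attach the edge $vw_2w_3$ together with edges witnessing $v\sim w_4,w_5,w_6$, and conclude that the resulting $F$ lies in $M_3$. Two minor points: the ``within-part'' case in your pigeonhole is vacuous, since Claim~\ref{claim-Lv[S]-empty} already gives $L(v)[U_j]=\emptyset$ for every $j$; and placing $v$ in $S$ is harmless but idle, because $v\notin W'$ makes $L_{\widehat{\mathcal{G}}}(v)$ empty and you already force $w_j\in N(v)$ through the choice $V_j=N(v)\cap U_j$.

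The substantive gap is your assertion that the lemma output satisfies $\widetilde{\mathcal{H}}_i[\{w_1,\dots,w_6\}]=\mathcal{G}_6^2$. With $S\supseteq\{w_2,w_3\}$ and $T=\{1,4,5,6\}$, the lemma only certifies edges lying inside $U=\{w_1,w_4,w_5,w_6\}$ or having \emph{one} endpoint in $S$ and two in $U$; it says nothing about the three edges $w_2w_3w_4,w_2w_3w_5,w_2w_3w_6$, so you obtain at most $13$ of the $16$ edges of $\mathcal{G}_6^2$. Your automorphism argument for $F\not\subset\mathcal{G}_n^2$ hinges on the full $\mathcal{G}_6^2$ structure (to force $\psi|_{\{w_1,\dots,w_6\}}$ to be an automorphism), and therefore does not go through as written. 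The paper avoids this by first averaging to find $w_2$ with many $B_v$-neighbours in $U_3$ (this is exactly where the threshold $300\epsilon^{1/12}\tilde n^2$ enters), and then applying the lemma with $S=\{w_2\}$ and $T=\{1,3,4,5,6\}$, so that $w_3$ is produced by the lemma and all $16$ edges are certified. Alternatively, your setup can be salvaged by replacing the automorphism step with the paper's degree count: the eight edges of $\mathcal{G}_6^2$ through vertex~$1$ are among your thirteen guaranteed edges, while $w_1w_2w_3\notin\widetilde{\mathcal{H}}_i$ since $\{1,2,3\}\notin\mathcal{G}_6^2$; hence $\psi(v)=\psi(w_1)$ would force $L_{\mathcal{G}_6^2}(\psi(w_1))$ to contain nine pairs, a contradiction.
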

\begin{proof}[Proof of Claim~\ref{CLAIM:bad-edges-are-few}]
Suppose to the contrary that $|B_v| \ge 300\epsilon^{1/12} \tilde{n}^2$.
Notice that every edge in $B_v$ must have one vertex in $U_2$ and the other vertex in $U_3 \cup U_6$.
By symmetry and the Pigeonhole principle, we may assume that at least $|B_v|/2$ edges in $B_v$
have one vertex in $U_2$ and the other vertex in $U_3$.
Then Claim~\ref{claim-P-size-2} and an easy averaging argument show that
there exists a vertex $w_2\in U_2$ such that
$$|N_{B_v}(w_2)\cap U_3| \ge \frac{|B_v|/2}{|U_2|} > \frac{300\epsilon^{1/12} \tilde{n}^2/2}{\tilde{n}/5} > 600\epsilon^{1/12} \tilde{n}.$$

Let $U_3' = N_{B_v}(w_2)\cap U_3$, and $U_j' = N(v) \cap U_j$ for $j \in \{4,5,6\}$.
Since $|U_3'| \ge 600\epsilon^{1/12} \tilde{n}$ and $|U_j'| \ge \tilde{n}/13$ for $j \in \{4,5,6\}$,
applying Lemma~\ref{LEMMA:greedily-embedding-Gi} with $T = \{w_2\}$, $S = \{1,3,4,5,6\}$, and $\eta = 30\epsilon^{1/4}$
we obtain $w_1 \in U_1$ and $w_j \in U_j'$ for $j\in \{3,4,5,6\}$ (see Figure~\ref{figure:Bv-small}) such that
the induced subgraph of $\widetilde{\mathcal{H}}_i$ on $\{w_1,\ldots,w_6\}$ is a copy of $\mathcal{G}_{6}^{2}$.
For $j\in \{4,5,6\}$ let $e_j \in \widetilde{\mathcal{H}}_i$ be an edge containing $v$ and $w_j$.
Let $F = \widetilde{\mathcal{H}}_i[\{w_1,\ldots,w_6\}] \cup \{e_j\colon j \in\{4,5,6\}\} \cup \{vw_2w_3\}$.
It is easy to see that $F$ is a member in $\mathcal{K}_{6}^{3}$ with core $\{v,w_2,\ldots,w_6\}$ (see Figure~\ref{figure:Bv-small}).
So, by assumption, either $F\subset \mathcal{G}_{m}^{1}$ or $F\subset \mathcal{G}_{m}^{2}$ for any integer $m$.
It is easy to see that the former case cannot hold since the induced subgraph of $F$ on the set $\{w_1,\ldots,w_6\}$
is a copy of $\mathcal{G}_{6}^{2}$ and $\mathcal{G}_{6}^{2} \not\subset \mathcal{G}_{m}^{1}$ for any integer $m$.
So, $F\subset \mathcal{G}_{n}^{2}$ for some integer $m$.
In other words, there exists a map $\psi\colon V(F) \to V(\mathcal{G}_{6}^{2})$ such that
$\psi(e)\in \mathcal{G}_{6}^{2}$ for all $e\in F$.
Notice that both $\{w_1,\ldots,w_6\}$ and $\{v,w_2,\ldots,w_6\}$ are $2$-covered in $F$,
so the restrictions of $\psi$ on $\{w_1,\ldots,w_6\}$ and $\{v,w_2,\ldots,w_6\}$ are both injective (similar to the proof of Lemma~\ref{lemma-M=homM}),
and moreover, $\psi(v) = \psi(w_1)$.
Let $w = \psi(v) = \psi(w_1)$.
Notice that the induced subgraph of $L_{F}(w_1)$ on $\{w_2,\ldots,w_3\}$ has size $8$
and $w_2w_3\in L_{F}(v)\setminus L_{F}(w_1)$.
Since $\psi$ preserves edges, the degree of $w$ in $\mathcal{G}_{6}^{2}$ should be at least $8+1 = 9$.
However, this contradicts the fact that the maximum degree of $\mathcal{G}_{6}^{2}$ is $8$.
\end{proof}

A consequence of Claim~\ref{CLAIM:bad-edges-are-few} is that the size of $M_v$ satisfies
\begin{align}
|M_v| = |\widetilde{G}_{1} \setminus L(v)|
& = |\widetilde{G}_{1}| - |\widetilde{G}_{1} \cap L(v)| \notag \\
& = |\widetilde{G}_{1}| - \left(|L(v)| - |B_v| \right) \notag \\
& < 8 \left( {1}/{6} + 10 \epsilon^{1/4} \right)^2 \tilde{n}^2
- \left( \left({2}/{9} -10 {\epsilon}^{1/2}\right) \tilde{n}^2 - |B_v| \right)
 < 400\epsilon^{1/12}\tilde{n}^2. \notag
\end{align}

\begin{figure}[htbp]
\centering
\begin{tikzpicture}[xscale=3,yscale=3]
     \node (v) at (-0.85,0.25) {};
    \node (w6) at (-1,0) {};
    \node (u2) at (0.5,0.5) {};
    \node (u3) at (1,0) {};
    \node (w4) at (0.5,-0.5) {};
    \node (w5) at (-0.5,-0.5) {};
    \node (w1) at (-0.5,0.5) {};
    \node (w2) at (0.63,0.5) {};
    \node (w3) at (1.13,0) {};

    \fill (v) circle (0.02) node [above] {$v$};
    \fill (w6) circle (0.02) node [left] {$w_6$};
    \fill (u2) circle (0.02) node [above] {$u_2$};
    \fill (u3) circle (0.02) node [above] {$u_3$};
    \fill (w4) circle (0.02) node [below] {$w_4$};
    \fill (w5) circle (0.02) node [below] {$w_5$};
    \fill (w1) circle (0.02) node [above] {$w_1$};
    \fill (w2) circle (0.02) node [right] {$w_2$};
    \fill (w3) circle (0.02) node [right] {$w_3$};

    \draw[line width=0.8pt] (0.5,0.5) circle [radius = 0.18];
    \draw[line width=0.8pt] (0.5,-0.5) circle [radius = 0.18];
    \draw[line width=0.8pt] (-0.5,0.5) circle [radius = 0.18];
    \draw[line width=0.8pt] (-0.5,-0.5) circle [radius = 0.18];
    \draw[line width=0.8pt] (1,0) circle [radius = 0.18];
    \draw[line width=0.8pt] (-1,0) circle [radius = 0.18];

    \draw[line width=0.5pt] (-0.85, 0.25) -- (0.5,0.5) -- (-1,0) -- (-0.85,0.25);
    \draw[line width=0.5pt] (1,0) -- (0.5,-0.5) -- (-0.5,-0.5) -- (1,0);
    \draw[line width=0.5pt] (-0.85, 0.25) -- (1,0);
    \draw[line width=0.5pt] (-0.85, 0.25) -- (0.5,-0.5);
    \draw[line width=0.5pt] (-0.85, 0.25) -- (-0.5,-0.5);
    \draw[line width=0.5pt] (0.5, 0.5) -- (1,0);
    \draw[line width=0.5pt] (0.5, 0.5) -- (0.5,-0.5);
    \draw[line width=0.5pt] (0.5, 0.5) -- (-0.5,-0.5);
    \draw[line width=0.5pt] (-1,0) -- (1,0);
    \draw[line width=0.5pt] (-1,0) -- (0.5,-0.5);
    \draw[line width=0.5pt] (-1,0) -- (-0.5,-0.5);

    \node at (-0.5,0.65+0.1) {$U_1$};
    \node at (0.5,0.65+0.1) {$U_2$};
    \node at (0.5,-0.65-0.1) {$U_4$};
    \node at (-0.5,-0.65-0.1) {$U_5$};
    \node at (-1,0-0.25) {$U_6$};
    \node at (1,0-0.25) {$U_3$};
\end{tikzpicture}
\caption{The $3$-graph $F = \widetilde{\mathcal{H}}_i[\{v,u_2,u_3,w_1,\ldots,w_6\}] \cup \{vu_2u_3\} \cup \{e_{u_3w_4}\}$
is a member in $M_3$ with core $\{v,u_2,u_3,w_4,w_5,w_6\}$.}
\label{figure:Bv-empty}
\end{figure}
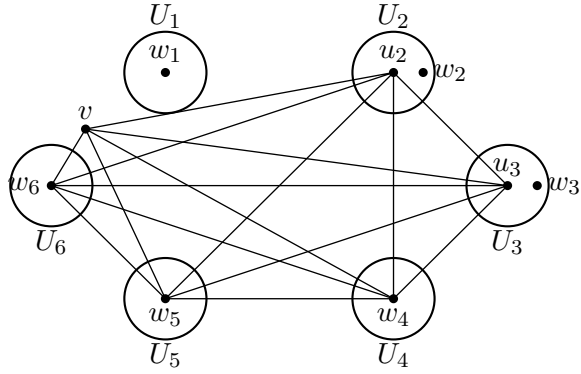

\begin{claim}\label{CLAIM:bad-edges-are-0}
We have $B_v = \emptyset$. In other words, $L_{\widetilde{\mathcal{H}}_i}(v) \subset \widetilde{G}_{1}$.
\end{claim}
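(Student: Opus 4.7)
The plan is to derive a contradiction from the hypothesis $B_v \neq \emptyset$. Fix any bad edge $\{v, u_2, u_3\} \in \widetilde{\mathcal{H}}_{i}$; since positions $3$ and $6$ are exchanged by an automorphism of $\mathcal{G}_{6}^{2}$, I may assume $u_2 \in U_2$ and $u_3 \in U_3$. The goal will be to locate a member of $M_3$ inside $\widetilde{\mathcal{H}}_{i}$ whose core is $\{v, u_2, u_3, w_4, w_5, w_6\}$ for appropriately chosen witnesses $w_4, w_5, w_6$, with auxiliary vertices at the remaining positions serving to certify that the induced structure really mimics $\mathcal{G}_{6}^{2}$.

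To produce the witnesses, I would apply Lemma~\ref{LEMMA:greedily-embedding-Gi} to the enlarged partition $V_1 = U_1 \cup C_v$, $V_j = U_j$ for $j \ge 2$ (merging $C_v$ into the first part so that $v$ can serve as an anchor) with $S = \{v, u_2, u_3\}$ and $T = [6]$. Its hypotheses are satisfied with $\eta = O(\epsilon^{1/12})$: the part sizes follow from Claim~\ref{claim-P-size-2}, and the link-deficit bounds come from $|M(u_2)|, |M(u_3)| \le 30\epsilon^{1/4}\tilde{n}^2$ (Claim~\ref{claim-size-MU(w)-MV(w')}) together with the estimate $|M_v| < 400\epsilon^{1/12}\tilde{n}^2$ established right after Claim~\ref{CLAIM:bad-edges-are-few}. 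The lemma produces vertices $w_j \in V_j$ with $w_2 \neq u_2$ and $w_3 \neq u_3$ for $j \in [6]$ such that (i) $\widetilde{\mathcal{H}}_i[\{w_1, \ldots, w_6\}]$ contains a copy of $\mathcal{G}_{6}^{2}$ in which each $w_j$ sits at position $j$, and (ii) for each anchor $x \in \{v, u_2, u_3\}$ every pair $\{w_j, w_k\}$ belonging to the link of the position of $x$ in $\mathcal{G}_{6}^{2}$ already lies in $L(x)$. Set $F = \widetilde{\mathcal{H}}_i[\{v, u_2, u_3, w_1, \ldots, w_6\}]$; the bad edge $\{v, u_2, u_3\}$ is automatically in $F$. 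A short case check using (i) and (ii) shows that $F \in \mathcal{K}_{6}^{3}$ with core $\{v, u_2, u_3, w_4, w_5, w_6\}$: pairs inside $\{v, u_2, u_3\}$ are covered by the bad edge, pairs inside $\{w_4, w_5, w_6\}$ by the triples $\{w_1, w_j, w_k\}$, and the cross pairs $\{x, w_j\}$ for $x \in \{v, u_2, u_3\}$ by triples $\{x, w_j, w_k\}$ guaranteed by (ii). Since $F$ contains a copy of $\mathcal{G}_{6}^{2}$ on $\{w_1, \ldots, w_6\}$ and $x\binom{6-x}{2} \le 12 < 16$ for every $x \in \{0,\ldots,6\}$, no such copy can be semibipartite, so $F \not\subseteq \mathcal{G}_n^1$ for any $n$.

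The main obstacle is to rule out $F \subseteq \mathcal{G}_n^2$. Suppose $\psi\colon V(F) \to [6]$ is a homomorphism witnessing such an embedding. Both the core $\{v, u_2, u_3, w_4, w_5, w_6\}$ and the set $\{w_1, \ldots, w_6\}$ are $2$-covered by edges of $F$, so $\psi$ is injective on each and bijects each onto $[6]$. If $|\widetilde{\mathcal{H}}_i[\{w_1, \ldots, w_6\}]| \ge 17$, then the $\ge 17$ induced edges would map, by the injectivity of $\psi$ there, to $\ge 17$ distinct triples of $\mathcal{G}_{6}^{2}$, contradicting $|\mathcal{G}_{6}^{2}| = 16$. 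Otherwise $|\widetilde{\mathcal{H}}_i[\{w_1, \ldots, w_6\}]| = 16$ exactly, so the induced subgraph coincides with the embedded $\mathcal{G}_{6}^{2}$ copy and $\psi|_{\{w_1, \ldots, w_6\}}$ is an isomorphism of this copy onto $\mathcal{G}_{6}^{2}$; in particular, it sends non-edges to non-edges. Comparing the two bijections onto $[6]$ yields $\psi(\{v, u_2, u_3\}) = \psi(\{w_1, w_2, w_3\})$ as $3$-subsets of $[6]$, but the left-hand side is the $\psi$-image of the bad edge and lies in $\mathcal{G}_{6}^{2}$, while the right-hand side is the image of the non-edge $\{1, 2, 3\}$ and lies in $\overline{\mathcal{G}_{6}^{2}}$. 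This contradiction gives $F \in M_3$, which contradicts the $\mathcal{M}$-freeness of $\widetilde{\mathcal{H}}_i$, so $B_v = \emptyset$.
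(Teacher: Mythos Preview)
Your strategy coincides with the paper's: assume a bad edge $vu_2u_3$ with $u_2\in U_2$, $u_3\in U_3$, use Lemma~\ref{LEMMA:greedily-embedding-Gi} to find $w_1,\ldots,w_6$ so that the resulting configuration $F$ lies in $\mathcal{K}_6^3$ with core $\{v,u_2,u_3,w_4,w_5,w_6\}$, and then show $F\in M_3$. Your argument that $F\not\subset\mathcal{G}_m^2$ is a pleasant variant of the paper's. Rather than pinning down $\psi(v)=\psi(w_1)$, $\psi(u_2)=\psi(w_2)$, $\psi(u_3)=\psi(w_3)$ and then counting link edges to exceed the maximum degree $8$ of $\mathcal{G}_6^2$, you note directly that the two bijections onto $[6]$ share $\psi(\{w_4,w_5,w_6\})$, whence $\psi(\{v,u_2,u_3\})=\psi(\{w_1,w_2,w_3\})$ as sets, and this $3$-set is simultaneously an edge (image of the bad edge) and a non-edge (image under an isomorphism of the non-edge $\{w_1,w_2,w_3\}$). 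This is slicker and lets you dispense with the paper's auxiliary edge $e_{u_3w_4}$.

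One technical point deserves care. By merging $C_v$ into $V_1$ you make hypothesis~(c) of Lemma~\ref{LEMMA:greedily-embedding-Gi} for $u_2,u_3$ on pairs involving $V_1$ hard to verify: e.g.\ for $u_2$ and the pair $\{1,4\}$ you would need $|L(u_2)[C_v,U_4]|$ to be nearly full, which amounts to bounding $|C_v|$ times the $U_4$-degree of $u_2$ in $M_v$, and the global bound $|M_v|<400\epsilon^{1/12}\tilde n^2$ does not control individual degrees. Fortunately the full strength of your (ii) is not needed downstream: every cross pair $\{u_2,w_j\}$ or $\{u_3,w_j\}$ with $j\in\{4,5,6\}$ can be covered by a triple avoiding $w_1$ (for instance $\{u_2,w_3,w_4\}$, $\{u_3,w_2,w_5\}$, etc.), so only the $w_1$-free instances of (ii) are used, and those follow from $|M(u_2)|,|M(u_3)|\le 30\epsilon^{1/4}\tilde n^2$. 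The paper avoids this issue by keeping the parts $U_j$ intact and instead restricting $U_4,U_5,U_6$ to subsets of $N(v)\cap N(u_2)\cap N(u_3)$ before invoking the lemma.
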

\begin{proof}[Proof of Claim~\ref{CLAIM:bad-edges-are-0}]
Suppose to the contrary that there exists an edge $u_2u_3\in B_v$
and by symmetry we may assume that $u_2\in U_2$ and $u_3\in U_3$.
For $j\in\{4,5,6\}$ let $U_j' = U_j \cap N(v) \cap N(u_1)\cap N(u_2)$ and
notice that due to $|M_v| \le 400\epsilon^{1/12}\tilde{n}^2$ and Claim~\ref{claim-P-size-2}
we have $|U_j'| \ge |U_j|/2 > \tilde{n}/20$.
Applying Lemma~\ref{LEMMA:greedily-embedding-Gi} with $T=\{u_2,u_3\}$, $S = [6]$, and $\eta = 400\epsilon^{1/36}$
we obtain $w_j \in U_j'$ for $j \in [6]$ (see Figure~\ref{figure:Bv-empty}) such that
\begin{itemize}
\item[(a)]
$\widetilde{\mathcal{H}}_i[\{w_1,\ldots,w_6\}] \cong \mathcal{G}_{6}^{2}$,
\item[(b)]
$L_{\widetilde{\mathcal{H}}_i}(v)[\{w_2,\ldots,w_6\}] = L_{\widehat{\mathcal{G}}}(w_1)[\{w_2,\ldots,w_6\}]$,
\item[(c)]
$L_{\widetilde{\mathcal{H}}_i}(u_2)[\{w_1,w_3,\ldots,w_6\}] = L_{\widehat{\mathcal{G}}}(u_2)[\{w_1,w_3,\ldots,w_6\}]$, and
\item[(d)]
$L_{\widetilde{\mathcal{H}}_i}(u_3)[\{w_1,w_2,w_4,w_5,w_6\}] = L_{\widehat{\mathcal{G}}}(u_3)[\{w_1,w_2,w_4,w_5,w_6\}]$.
\end{itemize}
Let $e_{u_3w_4} \in \widetilde{\mathcal{H}}_i$ be an edge containing $u_3$ and $w_4$.
Let $F = \widetilde{\mathcal{H}}_i[\{v,u_2,u_3,w_1,\ldots,w_6\}] \cup \{vu_2u_3\} \cup \{e_{u_3w_4}\}$.
Then it is easy to see that $F$ is a member in $\mathcal{K}_{6}^{3}$ with core $\{v,u_2,u_3,w_4,w_5,w_6\}$ (see Figure~\ref{figure:Bv-empty}).
Similar to the proof of Claim~\ref{CLAIM:bad-edges-are-few},  $F\subset \mathcal{G}_{m}^{2}$ for some integer $m$.
In other words, there exists a map $\psi\colon V(F) \to V(\mathcal{G}_{6}^{2})$ such that
$\psi(e)\in \mathcal{G}_{6}^{2}$ for all $e\in F$.
Notice that both $\{w_1,\ldots,w_6\}$ and $\{v,u_2,u_3,w_4,w_5,w_6\}$ are $2$-covered in $F$,
so the restrictions of $\psi$ on sets $\{w_1,\ldots,w_6\}$ and $\{v,u_2,u_3,w_4,w_5,w_6\}$ are both injective (similar to the proof of Lemma~\ref{lemma-M=homM}), and
moreover, $\psi(v) = \psi(w_1)$ (due to (b), $v$ is adjacent to all vertices in $\{w_2,\ldots,w_6\}$, so $\psi(v)$ is distinct from $\{\psi(w_2),\ldots,\psi(w_6)\}$), $\psi(u_2) = \psi(w_2)$ (due to (c) and a similar reason), and $\psi(u_3) = \psi(w_3)$ (due to (d) and a similar reason).
Let $w = \psi(v) = \psi(w_1)$.
Notice that the induced subgraph of $L_{F}(w_1)$ on $\{w_2,\ldots,w_6\}$ has size $8$
and $u_2u_3\in L_{F}(v)\setminus L_{F}(w_1)$.
Since $\psi$ preserves edges, the degree of $w$ in $\mathcal{G}_{6}^{2}$ should be at least $8+1 = 9$.
However, this contradicts the fact that the maximum degree of $\mathcal{G}_{6}^{2}$ is $8$.
\end{proof}

Define
\begin{align}
V_j^{i}
=
\begin{cases}
U_{1}\cup C_v, & \text{if } j = 1, \\
U_j, & \text{otherwise}.
\end{cases} \notag
\end{align}
By Claim~\ref{CLAIM:bad-edges-are-0}, $L(v) \subset \widetilde{G}_{1}$.
Therefore, $\widetilde{\mathcal{H}}_{i}$ is $\mathcal{G}^2_{6}$-colorable with set of parts $\{V_{1}^{i},\ldots,V_{6}^{i}\}$.
This completes the proof of Lemma~\ref{main-lemma-Ht-G2-color}.
\end{proof}

\section{Feasible Region of $\mathcal{M}$}\label{SEC:feasible-region-2-global-max}
In this section we consider the feasible region $\Omega(\mathcal{M})$ of $\mathcal{M}$ and prove Theorem \ref{thm-2-global-max}.
First, notice that our proof of Theorem \ref{thm-2-stable} actually gives the following stronger statement.

\begin{thm}\label{thm-2-stable-stronger}
For every sufficiently small $\epsilon > 0$ there exists $n_0$ such that the following holds for all $n \ge n_0$.
Every $\mathcal{M}$-free $3$-graph $\mathcal{H}$ with $n$ vertices and at least $2n^3/27 - \epsilon n^3$ edges
contains $W \subset V(\mathcal{H})$ with $|W| > n - 3\epsilon^{1/2}n$ such that
$\delta(\mathcal{H}[W]) \ge {2n^2}/{9} - 20 \epsilon^{1/2}n$ and $\mathcal{H}[W]$ is either semibipartite or $\mathcal{G}^{2}_{6}$-colorable.
\end{thm}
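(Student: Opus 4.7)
The plan is to observe that this stronger statement is implicit in the work already carried out for Theorem \ref{thm-2-stable}; the set $W$ is not newly constructed but rather identified with the vertex set $V(\mathcal{H}_t)$ produced by Algorithm 2. Concretely, I would apply Algorithm 2 to $\mathcal{H}$ with threshold $\alpha = 4/9 - 3\epsilon^{1/2}$, obtaining after $t$ iterations a hypergraph $\mathcal{H}_t$ on a vertex set $W$ of size $\tilde{n}$, and then verify that $W$ satisfies all three assertions of the theorem.

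The three conclusions correspond directly to facts already established. The size bound $|W| > n - 3\epsilon^{1/2}n$ is exactly Lemma \ref{lemma-size-Z-W}, since $W = V(\mathcal{H}) \setminus Z$ where $Z$ is the set of vertices removed during Algorithm 2. The minimum degree bound follows from Lemma \ref{lemma-low-bound-tilde-Hi} specialized to $i=0$: since $\widetilde{\mathcal{H}}_0 = \mathcal{H}[W]$ by definition, we have $\delta(\mathcal{H}[W]) > (4/9 - 10\epsilon^{1/2})\binom{\tilde{n}-1}{2}$; substituting $\tilde{n} \ge (1-3\epsilon^{1/2})n$ and absorbing lower-order terms yields the claimed bound. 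Finally, Lemma \ref{lemma-Ht-semi-or-G2-color} shows that $\mathcal{H}_t$ is either semibipartite or $\mathcal{G}^2_6$-colorable, and Lemmas \ref{main-lemma-Ht-semibipartite} and \ref{main-lemma-Ht-G2-color} then transport this structural property backward through all iterations of the algorithm to $\widetilde{\mathcal{H}}_0 = \mathcal{H}[W]$.

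The main substantive work, namely showing that the structural description of $\mathcal{H}_t$ can be pulled back through Algorithm 2 to $\mathcal{H}[W]$, was already the core content of Section \ref{SEC:proof-stability-M} and constitutes the principal obstacle that has been addressed. Once those lemmas are in hand, the proof of Theorem \ref{thm-2-stable-stronger} consists only of observing that the only vertices removed along the way lie outside $W$ and that the already-proved lower bound on $\delta(\widetilde{\mathcal{H}}_0)$ is precisely what we need. The sole remaining calculation is the routine rephrasing of bounds from $\tilde{n}$ to $n$, for which the slack introduced by the extra factor of $3\epsilon^{1/2}n$ vertices is absorbed into the constant in front of $\epsilon^{1/2}$ in the minimum degree estimate.
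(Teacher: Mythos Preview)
Your proposal is correct and matches the paper's own treatment exactly: the paper simply remarks that the proof of Theorem~\ref{thm-2-stable} already yields this stronger statement, and your outline spells out precisely which lemmas from Section~\ref{SEC:proof-stability-M} supply each conclusion (Lemma~\ref{lemma-size-Z-W} for $|W|$, Lemma~\ref{lemma-low-bound-tilde-Hi} at $i=0$ for the minimum degree, and Lemmas~\ref{lemma-Ht-semi-or-G2-color}, \ref{main-lemma-Ht-semibipartite}, \ref{main-lemma-Ht-G2-color} for the structural dichotomy).
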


Theorem \ref{thm-2-stable-stronger} gives the following lemma.

\begin{lemma}\label{lemma-shadow-semibi-G2-color}
Let $\epsilon > 0$ be sufficiently small and $n$ (related to $\epsilon$) be sufficiently large.
Suppose that $\mathcal{H}$ is an $\mathcal{M}$-free $3$-graph with $n$ vertices and at least $2n^3/27 - \epsilon n^3$ edges.
Then,
\begin{align}
\text{either }\left||\partial \mathcal{H}| - \frac{5}{12}n^2\right| < 100\epsilon^{1/4}n^2
\text{ or } \left||\partial \mathcal{H}| - \frac{4}{9}n^2\right| < 100\epsilon^{1/4}n^2.\notag
\end{align}
\end{lemma}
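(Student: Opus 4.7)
The plan is to apply Theorem~\ref{thm-2-stable-stronger} to reduce to two structural cases on a large subset $W\subseteq V(\mathcal{H})$, compute $|\partial\mathcal{H}[W]|$ in each case, and then transfer the estimate to $|\partial\mathcal{H}|$ using $\mathcal{M}$-freeness. First I will invoke Theorem~\ref{thm-2-stable-stronger}, obtaining $W$ with $|V\setminus W|\le 3\epsilon^{1/2}n$, $\delta(\mathcal{H}[W])\ge 2n^2/9-20\epsilon^{1/2}n$, and $\mathcal{H}[W]$ either semibipartite or $\mathcal{G}^2_6$-colorable; combining $|\mathcal{H}|\ge 2n^3/27-\epsilon n^3$ with $|\mathcal{H}|-|\mathcal{H}[W]|\le |V\setminus W|\binom{n-1}{2}$ will then give $|\mathcal{H}[W]|\ge 2n^3/27-O(\epsilon^{1/2}n^3)$.

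Next I would handle the semibipartite case: write $A,B$ for the two parts; matching the lower bound on $|\mathcal{H}[W]|$ against $|\mathcal{H}[W]|\le |A|\binom{|B|}{2}$ forces $\bigl||A|-|W|/3\bigr|$ and $\bigl||B|-2|W|/3\bigr|$ to be $O(\epsilon^{1/4}n)$. A double count over pair codegrees (each edge $\{a,b_1,b_2\}$ contributes one pair in $\binom{B}{2}$ of codegree at most $|A|$ and two pairs in $A\times B$ of codegree at most $|B|-1$) will then show that the number of uncovered pairs in $\binom{B}{2}\cup(A\times B)$ is $O(\epsilon^{1/2}n^2)$, yielding $|\partial\mathcal{H}[W]|=|A||B|+\binom{|B|}{2}-O(\epsilon^{1/2}n^2)=\tfrac{4n^2}{9}\pm O(\epsilon^{1/4}n^2)$. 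The $\mathcal{G}^2_6$-colorable case is handled by the same recipe, using the Lagrangian analysis of Lemma~\ref{lemma-lagrang-G26} (as in Claim~\ref{claim-P-size-2}) to obtain $\bigl||V_j|-|W|/6\bigr|=O(\epsilon^{1/4}n)$ and then double-counting across each of the $15$ bichromatic pair-types in $\partial\mathcal{G}^2_6$, producing $|\partial\mathcal{H}[W]|=\tfrac{5n^2}{12}\pm O(\epsilon^{1/4}n^2)$.

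To pass from $\partial\mathcal{H}[W]$ to $\partial\mathcal{H}$, I will write $|\partial\mathcal{H}|-|\partial\mathcal{H}[W]|\le |V\setminus W|\cdot n+\Delta$, where $\Delta$ counts pairs $\{u,v\}\in\binom{W}{2}\setminus\partial\mathcal{H}[W]$ lying in some edge $\{u,v,z\}\in\mathcal{H}$ with $z\in V\setminus W$; the first summand is at most $3\epsilon^{1/2}n^2$. In the semibipartite case, any such pair must (modulo the $O(\epsilon^{1/2}n^2)$ slack from the previous step) lie in $\binom{A}{2}$, so $\Delta\le \sum_{z\in V\setminus W}|L_{\mathcal{H}}(z)\cap\binom{A}{2}|+O(\epsilon^{1/2}n^2)$. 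The key claim I will prove is that for each $z\in V\setminus W$ the graph $L_{\mathcal{H}}(z)\cap\binom{A}{2}$ has matching number at most one, hence at most $\max(3,|A|-1)\le n/3$ edges. If a matching $\{a_1,a_2\},\{a_3,a_4\}$ existed, Lemma~\ref{LEMMA:greedily-embedding-Gi} combined with the minimum degree of $\mathcal{H}[W]$ would produce $b_1,b_2\in B$ in the common link of $a_1,\ldots,a_4$ and further outside vertices covering the remaining pairs of the $7$-set core $\{z,a_1,a_2,a_3,a_4,b_1,b_2\}$; the resulting $F\in\mathcal{K}_7^3$ would have an induced subgraph on its core with transversal number at least two (no vertex lies in both $\{z,a_1,a_2\}$ and $\{a_3,b_1,b_2\}$), contradicting $M_2$-freeness. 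Summing over $z$ gives $\Delta=O(\epsilon^{1/2}n^2)$, and the $\mathcal{G}^2_6$-colorable case is settled by running the analogous argument in each colour class, invoking either $M_2$ or $M_3$ as appropriate.

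Combining these estimates will yield $\bigl||\partial\mathcal{H}|-cn^2\bigr|=O(\epsilon^{1/4}n^2)$ with $c\in\{4/9,5/12\}$, and after choosing the implicit constant below $100$ the lemma follows. I expect the main obstacle to be the matching-free claim in the last step: executing the greedy embedding of Lemma~\ref{LEMMA:greedily-embedding-Gi} simultaneously for the common $B$-link pair $b_1,b_2$ and for the remaining core pair-covers, while preserving the transversal-number-two property of the induced subgraph on the $7$-vertex core, is delicate and closely parallels the embedding arguments of Claims~\ref{CLAIM:bad-edges-are-few}--\ref{CLAIM:bad-edges-are-0}.
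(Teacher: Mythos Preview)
Your high-level plan matches the paper: invoke Theorem~\ref{thm-2-stable-stronger}, pin down part sizes from the edge count, lower-bound $|\partial\widetilde{\mathcal{H}}|$ by double counting, and upper-bound $|\partial\mathcal{H}|$ by ruling out shadow pairs inside a single part via an $M_2$-embedding.  The lower-bound parts and the $\mathcal{G}^2_6$-colorable case are fine.

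The gap is in your upper bound for the semibipartite case.  Your proposed core $\{z,a_1,a_2,a_3,a_4,b_1,b_2\}$ contains four vertices of $A$, hence six pairs inside $\binom{A}{2}$; the edges $\{z,a_1,a_2\}$ and $\{z,a_3,a_4\}$ cover only two of them, and the remaining four pairs $\{a_1,a_3\},\{a_1,a_4\},\{a_2,a_3\},\{a_2,a_4\}$ cannot be covered by any edge of $\mathcal{H}[W]$ (it is semibipartite), nor do you have any control over edges through $Z$ that might cover them.  Similarly, the pairs $\{z,b_1\},\{z,b_2\}$ are not guaranteed to be covered since $z\in Z$ has no known structure.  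So this configuration need not lie in $\mathcal{K}_7^3$, and the matching-number argument breaks down.

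The paper avoids this by keeping $z$ \emph{outside} the core and putting only two $A$-vertices in it.  Given a single edge $w_1w_2\in L_{\mathcal{H}}(z)[A]$, the minimum-degree condition forces $L_{\mathcal{H}[W]}(w_1)\cap L_{\mathcal{H}[W]}(w_2)$ (a graph on $B$) to be dense enough to contain a $K_5$ on $\{w_3,\dots,w_7\}$; the core is $\{w_1,\dots,w_7\}$, the unique $A$--$A$ pair $\{w_1,w_2\}$ is covered by the edge $\{z,w_1,w_2\}$, and the edges $\{w_1,w_3,w_4\},\{w_2,w_5,w_6\}$ witness transversal number $\ge 2$.  This yields the stronger conclusion $L_{\mathcal{H}}(z)[A]=\emptyset$ for every $z\in Z$, whence $|\partial\mathcal{H}|\le\binom{n}{2}-\binom{|A|}{2}$ directly, with no need for the $\Delta$-bookkeeping.
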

\begin{proof}
Let $\epsilon > 0$ be sufficiently small and $n$ (related to $\epsilon$) be sufficiently large.
Let $\mathcal{H}$ be an $\mathcal{M}$-free $3$-graph with $n$ vertices and at least $2n^3/27 - \epsilon n^3$ edges.
By Theorem \ref{thm-2-stable-stronger},
there exists $W \subset V(\mathcal{H})$ with $|W| > n - 3\epsilon^{1/2}n$ such that $\delta(\mathcal{H}[W]) \ge {2n^2}/{9} - 20 \epsilon^{1/2}n^2$
and $\mathcal{H}[W]$ is either semibipartite or $\mathcal{G}^{2}_{6}$-colorable.
Let $Z = V(\mathcal{H})\setminus W$, $\tilde{n} = |W|$, and $\widetilde{\mathcal{H}} = \mathcal{H}[W]$.
Then,
\begin{align}\label{equ-size-H[W]}
|\widetilde{\mathcal{H}}| = \frac{1}{3}\sum_{w \in W}d_{\widetilde{\mathcal{H}}}(w)
> \frac{1}{3} \left( n - 3\epsilon^{1/2}n \right)\left(\frac{2}{9}n^2 - 20 \epsilon^{1/2}n^2 \right)
> \frac{2}{27}n^3 - 20\epsilon^{1/2}n^3.
\end{align}

Suppose that $\mathcal{H}[W]$ is semibipartite and
let $L$ and $R$ denote the two parts of $\mathcal{H}[W]$ such that every $E \in \mathcal{H}[W]$
satisfies $|A \cap E| = 1$ and $|B \cap E| = 2$.
Note from Claim \ref{claim-size-Ai+1-Bi+1} that
\begin{align}\label{equ-size-L-FR}
|L| = \frac{|W|}{3} \pm 4 \epsilon^{1/4} |W| = \frac{n}{3}\pm 8\epsilon^{1/4}n
\end{align}
and
\begin{align}\label{equ-size-R-FR}
|L| = \frac{2|W|}{3} \pm 4 \epsilon^{1/4} |W| = \frac{2n}{3}\pm 8\epsilon^{1/4}n.
\end{align}

First we prove the lower bound for $|\partial {\mathcal{H}}|$.
Let
\[
(\partial\widetilde{\mathcal{H}})[R] = \left\{ uv \in \partial\widetilde{\mathcal{H}}: \{u,v\} \subset R \right\},
\]
and
\[
(\partial\widetilde{\mathcal{H}})[L,R] = \left\{ uv \in \partial\widetilde{\mathcal{H}}: u \in L, v\in R \right\}.
\]
Since $\widetilde{\mathcal{H}}$ is semibipartite,
$$|L||(\partial\widetilde{\mathcal{H}})[R]| \ge \sum_{v \in L}d_{\widetilde{H}}(v) = |\widetilde{H}|$$
and
$$|R||(\partial\widetilde{\mathcal{H}})[L,R]| \ge \sum_{u \in R}d_{\widetilde{H}}(u) = 2|\widetilde{H}|.$$
Consequently,
\begin{align}
|\partial \widetilde{\mathcal{H}}|
 = |(\partial\widetilde{\mathcal{H}})[R]| + |(\partial\widetilde{\mathcal{H}})[L,R]|
& \ge \frac{|\widetilde{\mathcal{H}}|}{|L|} + \frac{2|\widetilde{\mathcal{H}}|}{|R|} \notag \\
& \overset{(\ref{equ-size-H[W]}),(\ref{equ-size-L-FR}),(\ref{equ-size-R-FR})}{>}
\frac{{2n^3}/{27} - 20 \epsilon^{1/2}n^3}{({1}/{3} + 8\epsilon^{1/4})n}
+ \frac{2({2n^3}/{27} - 20 \epsilon^{1/2}n^3)}{({2}/{3} + 8\epsilon^{1/4})n} \notag \\
& > \frac{4}{9}n^2 - 100\epsilon^{1/2}n^2. \notag
\end{align}
Therefore, $|\partial \mathcal{H}| \ge |\partial \widetilde{\mathcal{H}}| > {4n^2}/{9} - 100\epsilon^{1/2}n^2$.

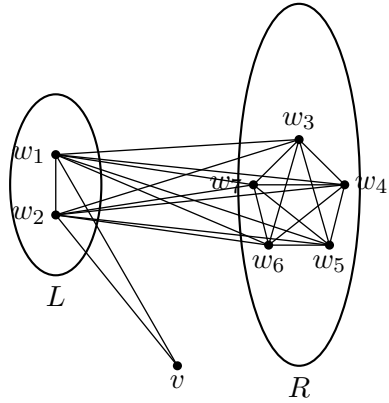
\begin{figure}[htbp]\label{fig-Lv[L]-empty}
\centering
\begin{tikzpicture}[xscale=4,yscale=4]
    \node (v) at (0,-0.6) {};
    \fill (v) circle (0.015) node [below] {$v$};
    \node (w1) at (-0.4,0.1) {};
    \fill (w1) circle (0.015) node [left] {$w_1$};
    \node (w2) at (-0.4,-0.1) {};
    \fill (w2) circle (0.015) node [left] {$w_2$};
    \node (w3) at (0.4,0.15) {};
    \fill (w3) circle (0.015) node [above] {$w_3$};
    \node (w5) at (0.5,-0.2) {};
    \fill (w5) circle (0.015) node [below] {$w_5$};
    \node (w6) at (0.3,-0.2) {};
    \fill (w6) circle (0.015) node [below] {$w_6$};
    \node (w4) at (0.55,0) {};
    \fill (w4) circle (0.015) node [right] {$w_4$};
    \node (w7) at (0.25,0) {};
    \fill (w7) circle (0.015) node [left] {$w_7$};
    \draw[rotate=0,line width=0.8pt] (-0.4,0) ellipse [x radius=0.15, y radius=0.3];
    \draw[rotate=0,line width=0.8pt] (0.4,0) ellipse [x radius=0.2, y radius=0.6];
    \draw[line width=0.5pt] (0,-0.6) -- (-0.4,-0.1);
    \draw[line width=0.5pt] (0,-0.6) -- (-0.4,0.1);
    \draw[line width=0.5pt] (0.3,-0.2) -- (-0.4,0.1);
    \draw[line width=0.5pt] (0.3,-0.2) -- (-0.4,-0.1);
    \draw[line width=0.5pt] (0.3,-0.2) -- (0.4,0.15);
    \draw[line width=0.5pt] (0.3,-0.2) -- (0.5,-0.2);
    \draw[line width=0.5pt] (0.3,-0.2) -- (0.25,0);
    \draw[line width=0.5pt] (0.3,-0.2) -- (0.55,0);
    \draw[line width=0.5pt] (-0.4,-0.1) -- (-0.4,0.1);
    \draw[line width=0.5pt] (0.4,0.15) -- (0.5,-0.2);
    \draw[line width=0.5pt] (0.4,0.15) -- (0.25,0);
    \draw[line width=0.5pt] (0.4,0.15) -- (0.55,0);
    \draw[line width=0.5pt] (0.5,-0.2) -- (0.25,0);
    \draw[line width=0.5pt] (0.5,-0.2) -- (0.55,0);
    \draw[line width=0.5pt] (0.55,0) -- (0.25,0);
    \draw[line width=0.5pt] (-0.4,0.1) -- (0.25,0);
    \draw[line width=0.5pt] (-0.4,0.1) -- (0.55,0);
    \draw[line width=0.5pt] (-0.4,0.1) -- (0.5,-0.2);
    \draw[line width=0.5pt] (-0.4,0.1) -- (0.4,0.15);
    \draw[line width=0.5pt] (-0.4,-0.1) -- (0.25,0);
    \draw[line width=0.5pt] (-0.4,-0.1) -- (0.55,0);
    \draw[line width=0.5pt] (-0.4,-0.1) -- (0.5,-0.2);
    \draw[line width=0.5pt] (-0.4,-0.1) -- (0.4,0.15);
    \node at (-0.4,-0.3-0.07) {$L$};
    \node at (0.4,-0.6-0.07) {$R$};
\end{tikzpicture}
\caption{The $3$-graph $F = \mathcal{H}[\{w_1,\ldots,w_7\}] \cup \{uw_1w_2\}$ is a member in $M_2$ with core $\{w_1,\ldots,w_7\}$.
In particular, $\tau\{w_1w_3w_4,w_2w_5w_6\} > 1$.}
\label{fig:semibipartite-no-link-in-L}
\end{figure}

Next, we prove the upper bound for $|\partial {\mathcal{H}}|$.
Let $v \in Z$ and suppose that there exists an edge $w_1w_2 \in L_{\mathcal{H}}(v)[L]$.
Since $L_{\mathcal{H}[W]}(w_1)$ and $L_{\mathcal{H}[W]}(w_2)$ are graphs on $R$ and
$|L_{\mathcal{H}[W]}(w_1)|, |L_{\mathcal{H}[W]}(w_2)| \ge 2n^2/9-20\epsilon^{1/2}n^2$,
the edge density of $L_{\mathcal{H}[W]}(w_1) \cap L_{\mathcal{H}[W]}(w_2)$ is close to $1$.
So, by Tur\'{a}n's theorem, there exists a copy of $K_5$ in $L_{\mathcal{H}[W]}(w_1) \cap L_{\mathcal{H}[W]}(w_2)$.
We may assume that the vertex set of this $K_5$ is $\{w_3, w_4, w_5, w_6, w_7\}$ (see Figure~\ref{fig:semibipartite-no-link-in-L}).
Let $F = \mathcal{H}[\{w_1,\ldots,w_7\}] \cup \{uw_1w_2\}$.
Then it is easy to see that $F$ is a member in $M_2$ with core $\{w_1,\ldots,w_7\}$, a contradiction.
Therefore, $L_{\mathcal{H}}(v)[L] = \emptyset$ for all $v \in Z$, and it follows that
\begin{align}
|\partial \mathcal{H}|
& \le \binom{|W|}{2} - \binom{|L|}{2} +  |Z||W| + \binom{|Z|}{2} \notag \\
& \le \binom{n}{2} - \binom{|L|}{2}
 < \frac{1}{2} n^2 - \frac{1}{2}\left( \frac{1}{3} - 8\epsilon^{1/4}\right)n^2
 < \frac{4}{9}n^2 + 100\epsilon^{1/4}n^2. \notag
\end{align}
Therefore, if $\widetilde{\mathcal{H}}$ is semibipartite, then
\begin{align}
\frac{4}{9}n^2 - 100\epsilon^{1/2}n^2 < |\partial \mathcal{H}| < \frac{4}{9}n^2 + 100\epsilon^{1/4}n^2. \notag
\end{align}

Suppose that $\widetilde{\mathcal{H}}$ is $\mathcal{G}^{2}_{6}$-colorable and
let $\mathcal{P}:=\{A_1,A_2,A_3,A_4,B_1,B_2\}$ be the set of six parts of $\widetilde{\mathcal{H}}$ such that
there is no edge between $A_1A_2B_1$, $A_1A_2B_2$, $A_3A_4B_1$, and $A_3A_4B_2$.
Notice from Claim \ref{claim-P-size-2} that
\begin{align}\label{equ-size-S-FR}
|S| = \frac{|W|}{6}\pm 10 \epsilon^{1/2} |W| = \frac{n}{6} \pm 10 \epsilon^{1/4} n, \text{ for all }S \in \mathcal{P}.
\end{align}

First, we prove the lower bound for $|\partial {\mathcal{H}}|$.
Since $\widetilde{\mathcal{H}}$ is $\mathcal{G}^{2}_{6}$-colorable, $\partial \widetilde{\mathcal{H}}$ is a $6$-partite graph
the set of six parts $\mathcal{P}$.
Let $\mathcal{G}$ denote the blow up of $\mathcal{G}^{2}_{6}$ with the set of six parts $\mathcal{P}$ such that
there is no edge between $A_1A_2B_1$, $A_1A_2B_2$, $A_3A_4B_1$, and $A_3A_4B_2$.
Notice that for every $e \in \partial\mathcal{G}\setminus \partial \widetilde{\mathcal{H}}$ 
there are at least $2(|W|/6 - 10 \epsilon^{1/2} |W|)$
sets $E \in \mathcal{G}\setminus \widetilde{\mathcal{H}}$ such that $e \in E$.
Therefore,
\begin{align}
|\partial\mathcal{G}\setminus \partial \widetilde{\mathcal{H}}|
& \le \frac{3|\mathcal{G}\setminus \widetilde{\mathcal{H}}|}{2(|W|/6 - 10 \epsilon^{1/2} |W|)}
 \overset{(\ref{equ-size-H[W]}),(\ref{equ-size-S-FR})}{<}
\frac{3\times 20 \epsilon^{1/2} n^2}{2({n}/{6} - 10 \epsilon^{1/4} n)}
< 400 \epsilon^{1/2} n^2, \notag
\end{align}
and it follows that
\begin{align}
|\partial \widetilde{\mathcal{H}}|
> |\partial\mathcal{G}| - 400 \epsilon^{1/2} n^2
\overset{(\ref{equ-size-S-FR})}{>}
\binom{6}{2}\left( \frac{n}{6} - 10 \epsilon^{1/4} n \right)^2 -  400 \epsilon^{1/2} n^2
> \frac{5}{12}n^2 - 100 \epsilon^{1/4} n^2 \notag
\end{align}
Therefore, $|\partial \mathcal{H}| \ge |\partial \widetilde{\mathcal{H}}| > {5n^2}/{12} - 100\epsilon^{1/4}n^2$.

Next, we prove the upper bound for $|\partial {\mathcal{H}}|$.
Let $v \in Z$ and suppose that $L_{\mathcal{H}}(v)[S] \neq \emptyset$ for some $S \in \mathcal{P}$.
Then Claim \ref{claim-Lv[S]-empty}\footnote{Even though Claim \ref{claim-Lv[S]-empty} was proved only for vertices in $W$, but in fact, 
its proof does not require $v$ to have a large degree. So it also holds for vertices in $Z$.}
 implies that $\mathcal{H}$ contains a copy of a $3$-graph in $M_2$, a contradiction.
Therefore, $L_{\mathcal{H}}(v)[S] = \emptyset$ for all $S \in \mathcal{P}$, and it follows that
\begin{align}
|\partial \mathcal{H}|
\le \frac{5}{12}|W|^2 + |Z||W| + \binom{|Z|}{2}
< \frac{5}{12}n^2 + 100\epsilon^{1/4}n^2. \notag
\end{align}
Therefore, if $\widetilde{\mathcal{H}}$ is $\mathcal{G}^{2}$-colorable, then
\begin{align}
\frac{5}{12}n^2 - 100\epsilon^{1/4}n^2 < |\partial \mathcal{H}| < \frac{5}{12}n^2 + 100\epsilon^{1/4}n^2. \notag
\end{align}
\end{proof}

Now we are ready to prove Theorem \ref{thm-2-global-max}.

\begin{proof}[Proof of Theorem \ref{thm-2-global-max}]
Let
\[
\mathcal{S}_n = \left\{ A \subset [n]: 1 \in A \right\}.
\]
Since $\mathcal{S}_n$ is $\mathcal{M}$-free and $|\partial \mathcal{S}_n| = \binom{n}{2}$,
it follows from Observation 1.5 in \cite{LM19A} that ${\rm proj}\Omega(\mathcal{M}) = [0,1]$.
On the other hand, it follows from Theorem \ref{thm-Turan-num-M} that $g(\mathcal{M},x) \le 4/9$ for all $x \in [0,1]$ and
$g(\mathcal{M},5/6) = g(\mathcal{M},8/9) = 4/9$.

Now suppose that $\left( \mathcal{H}_{k} \right)_{k=1}^{\infty}$ is a sequence of $\mathcal{M}$-free $3$-graphs
with $\lim_{k \to \infty}v(\mathcal{H}_{k}) = \infty$, $\lim_{k \to\infty}d(\partial \mathcal{H}_k) = x_0$, and
$\lim_{k \to\infty}d(\mathcal{H}_k) = 4/9$.
For any sufficiently small $\epsilon > 0$ and sufficiently $n_0$,
there exists $k_0$ such that $v(\mathcal{H}_{k}) \ge n_0$
and $|\mathcal{H}_{k}| > 2(v(\mathcal{H}_{k}))^3/27 - \epsilon (v(\mathcal{H}_{k}))^3$ for all $k \ge k_0$.
Therefore, by Lemma \ref{lemma-shadow-semibi-G2-color}, for every $k \ge k_0$
either
\[
\frac{8}{9} - 200\epsilon^{1/4} < \frac{|\partial \mathcal{H}_k|}{\binom{v(\mathcal{H}_{k})}{2}} < \frac{8}{9} + 200\epsilon^{1/4}
\]
or
\[
\frac{5}{6} - 200\epsilon^{1/4} < \frac{|\partial \mathcal{H}_k|}{\binom{v(\mathcal{H}_{k})}{2}} < \frac{5}{6} + 200\epsilon^{1/4}.
\]
Letting $\epsilon \to 0$ we obtain either $x_0 = 8/9$ or $x_0 = 5/6$, and this completes the proof.
\end{proof}

\section{Open Problems}\label{SEC:open-problem}
Recall from Definitions \ref{dfn-t-stable} and \ref{dfn-stability-number} that the stability number $\xi(\mathcal{F})$ of a family $\mathcal{F}$
is the minimum integer $t$ such that $\mathcal{F}$ is $t$-stable.
We constructed a family $\mathcal{M}$ of $3$-graphs and proved that $\xi(\mathcal{M}) = 2$.
It seems natural to consider the following problems.

\begin{prob}\footnote{This problem was recently solved by Reiher and the authors in \cite{LMR1,LMR3}.}
For every pair $(r, t)$ with $r \ge 3$ and $t \ge 2$,
determine if there exists a family $\mathcal{F}$ of $r$-graphs with $\xi(\mathcal{F}) = t$.
\end{prob}

Proposition~\ref{prop-stability-number-K43} states that $\xi(K_{4}^{3}) = \infty$ (assuming that Tur\'{a}n's conjecture is true).
However, determining $\textrm{ex}(n,K_4^{3})$ still seems far beyond reach.
So we pose the following problem.
\begin{prob}
Determine $\textrm{ex}(n,\mathcal{F})$ for some family $\mathcal{F}$ with $\xi(\mathcal{F}) = \infty$.
\end{prob}

\section{Acknowledgment}
We are very grateful to all the referees for their many helpful comments. In particular, for the suggestion of using the result in~\cite{FPS05} which substantially shortened the presentation  and for the cleaner and shorter proofs of some technical statements (Lemma~\ref{lemma-lagrang-G26} and Claim~\ref{claim-P-size-2}).
\bibliographystyle{abbrv}
\bibliography{twostable}
\end{document}